\documentclass[10pt]{article}
\pdfoutput=1

\usepackage{amssymb}
\usepackage{amsmath}
\usepackage{mathtools}
\usepackage{algpseudocode}
\textwidth=6.5true in
\textheight=9true in
\topmargin-0.5true in
\oddsidemargin=-0.25true in
\usepackage{graphics,epsfig}
\DeclareGraphicsExtensions{.pdf}

\usepackage{url,here}

\usepackage{algorithm}
\allowdisplaybreaks

\usepackage{multirow}

\begin{document}

\newenvironment {proof}{{\noindent\bf Proof.}}{\hfill $\Box$ \medskip}

\newtheorem{theorem}{Theorem}[section]
\newtheorem{lemma}[theorem]{Lemma}
\newtheorem{condition}[theorem]{Condition}
\newtheorem{proposition}[theorem]{Proposition}
\newtheorem{remark}[theorem]{Remark}
\newtheorem{definition}[theorem]{Definition}
\newtheorem{hypothesis}[theorem]{Hypothesis}
\newtheorem{corollary}[theorem]{Corollary}
\newtheorem{example}[theorem]{Example}
\newtheorem{descript}[theorem]{Description}
\newtheorem{assumption}[theorem]{Assumption}

\def\P{\mathbb{P}}
\def\R{\mathbb{R}}
\def\E{\mathbb{E}}
\def\N{\mathbb{N}}
\def\Z{\mathbb{Z}}

\renewcommand {\theequation}{\arabic{section}.\arabic{equation}}
\def \non{{\nonumber}}
\def \hat{\widehat}
\def \tilde{\widetilde}
\def \bar{\overline}

\def\ind{{\mathchoice {\rm 1\mskip-4mu l} {\rm 1\mskip-4mu l}
{\rm 1\mskip-4.5mu l} {\rm 1\mskip-5mu l}}}
\algnewcommand{\algorithmicgoto}{\textbf{go to step}}%
\algnewcommand{\Goto}[1]{\algorithmicgoto~\ref{#1}}%

\title{\Large\ { \bf Computational identification of irreducible state-spaces for stochastic reaction networks}}

\author{Ankit Gupta and Mustafa Khammash \\
Department of Biosystems Science and Engineering \\ ETH Zurich \\  Mattenstrasse 26 \\ 4058 Basel, Switzerland. 
}
\date{}

\maketitle
\begin{abstract}
Stochastic models of reaction networks are becoming increasingly important in Systems Biology. In these models, the dynamics is generally represented by a continuous-time Markov chain whose states denote the copy-numbers of the constituent species. The state-space on which this process resides is a subset of non-negative integer lattice and for many examples of interest, this state-space is countably infinite. This causes numerous problems in analyzing the Markov chain and understanding its long-term behavior. These problems are further confounded by the presence of conservation relations among species which constrain the dynamics in complicated ways. In this paper we provide a linear-algebraic procedure to disentangle these conservation relations and represent the state-space in a special decomposed form, based on the copy-number ranges of various species and dependencies among them. This decomposed form is advantageous for analyzing the stochastic model and for a large class of networks we demonstrate how this form can be used for finding all the closed communication classes for the Markov chain within the infinite state-space. Such communication classes are irreducible state-spaces for the dynamics and they support all the extremal stationary distributions for the Markov chain. Hence our results provide important insights into the long-term behavior and stability properties of stochastic models of reaction networks. We discuss how the knowledge of these irreducible state-spaces can be used in many ways such as speeding-up stochastic simulations of multiscale networks or in identifying the stationary distributions of complex-balanced networks. We illustrate our results with several examples of gene-expression networks from Systems Biology.
\end{abstract}

\noindent Keywords: stochastic reaction networks; state-space analysis; communication classes; state-space irreducibility; stationarity; ergodicity. \\
\noindent Mathematical Subject Classification (2010): 60J22; 60J27; 60H35; 65C05.
\medskip

\setcounter{equation}{0}

\section{Introduction} \label{sec:intro}

Many biological processes are described as reaction networks, where finitely many species interact with each other through some fixed reaction channels \cite{Hethcote,Berger,Alon,CellSignaling,Vilar,Gardner}. Traditionally, reaction networks have been mathematically studied by expressing the dynamics as a set of ordinary differential equations (ODEs). However it is now well-known that these deterministic formulations become highly inaccurate when the copy-numbers of the reacting species are \emph{small}. This is because the timing of reactions becomes random, introducing \emph{noise} into the dynamics, which can significantly change the behavior of the system being modeled \cite{Arkin}. Such situations arise commonly in Systems Biology, since intracellular networks often involve species with low copy-numbers like gene-transcripts, signaling proteins, messenger RNAs, transcription factors etc. \cite{Elowitz,Arkin}. The biochemical noise generated by the intermittency of reactions can be taken into account using stochastic models of reaction networks. A common approach is to represent the dynamics as a continuous-time Markov chain (CTMC) whose states denote the copy-numbers of the constituent species. For recent surveys on these stochastic models and the methods available for analyzing them see \cite{Goutsias,DASurvey,schnoerr2017approximation}. Over the past few years, such models have been extensively used for understanding the role of noise in various biological mechanisms \cite{Rao,Elowitz}.

We now formally describe such a stochastic model for a reaction network. Throughout this paper $\R$, $\R_+$, $\Z$, $\N$ and $\N_{0}$ denote the sets of all reals, nonnegative reals, integers, positive integers and nonnegative integers respectively. Consider a network with $d$ species $\mathbf{S}_1,\dots,\mathbf{S}_d$ which interact through $K$ reaction channels of the form
\begin{align}
\label{formofthereaction}
\sum_{i=1}^d \nu_{ik} \mathbf{S}_i \longrightarrow \sum_{i=1}^d \rho_{ik} \mathbf{S}_i, \qquad k=1,\dots,K,
\end{align} 
where $\nu_{ik}$ and $ \rho_{ik}$ belong to $\N_0$ and they denote the number of molecules of $\mathbf{S}_i$ that are consumed and produced by reaction $k$. Under the classical well-mixed assumption \cite{GP}, the network's state at any time is described by a vector $x = (x_1,\dots,x_d) \in \N^d_0$ of copy-number counts of all the species, i.e. $x_i$ denotes the copy-number of species $\mathbf{S}_i$. The net change in the state due to reaction $k$ is simply given by the \emph{stoichiometry vector} $\zeta_k \in \Z^d$ whose $i$-th component is $\zeta_{ik} = ( \rho_{ik}  - \nu_{ik}) \in \Z$. The usual CTMC model for the reaction dynamics \cite{DASurvey} stipulates that the transition rate from state $x$ to state $(x +\zeta_k)$ is $\lambda_k(x)$ for each $k=1,\dots,K$, where $\lambda_1,\dots, \lambda_K : \N^d_0  \to \R_+$ are called the \emph{propensity} functions for the network and they are assumed to satisfy:  
\begin{align}
\label{statespacecond}
\textnormal{ for any} \quad x \in \N^d_0 \quad \textnormal{and} \quad k =1,\dots,K  \quad \textnormal{if} \quad \lambda_k(x) > 0 \quad \textnormal{then} \quad (x+ \zeta_k) \in \N^d_0.  
\end{align}
This condition ensures that the set $\N^d_0$ is a valid \emph{state-space} for the CTMC, because it is \emph{closed} under the reaction dynamics i.e. if the starting state is in $\N^d_0$ then the dynamics remains in $\N^d_0$ throughout its trajectory.

Under mild conditions on the propensity functions (see Chapter 6 in \cite{EK}), one can ensure that the CTMC $$( X(t) )_{t  \geq 0 } = ( X_1(t),\dots,X_d(t) )_{t \geq 0 }$$ with the above transition structure is well-defined for any initial state $x_0 \in \N^d_0$. Define the probability that the reaction dynamics is at state $y\in   \N^d_0$ at time $t$ by
\begin{align}
\label{transition_probabilities}
p_{x_0}(t,y) = \P\left( X(t) = y \right).
\end{align}
Then the dynamics of $p_{x_0}(t,\cdot)$ is given by the Chemical Master Equation (CME) \cite{GillespieCME} which has the following form:
\begin{align}
\label{defn_cme}
\frac{ d p_{x_0}(t,y) }{dt} = & \sum_{k=1}^K  \left( p_{x_0}(t, y - \zeta_k) \lambda_k(y -  \zeta_k) -p_{x_0}(t, y ) \lambda_k(y) \right), 
\end{align}  
for each $y \in  \N^d_0$ which can be accessed by the CTMC. It can be shown that CME has a unique solution provided that the CTMC is non-explosive (see \cite{DASurvey}). For most examples in Systems Biology, the number of accessible states is either infinite or very large, and hence solving the CME to obtain the probabilities $p_{x_0}(t,y)$ is nearly impossible. One generally estimates these values by simulating the CTMC $(X(t))_{t\geq0}$ using Monte Carlo methods such as Gillespie's \emph{Stochastic Simulation Algorithm} (SSA) \cite{GP}. Another popular approach for obtaining approximate solutions is the \emph{Finite State Projection} (FSP) method which efficiently truncates the state-space to a small finite set and then solves the CME over this finite set \cite{FSP}. Both SSA and FSP based approaches work well for smaller networks and over finite time-intervals, and hence they do not help in satisfactorily assessing the long-term behavior and stability properties of the stochastic model. For Markov chains over a finite state-space, one way to assess this long-term behavior is by computing the disjoint \emph{closed communication classes} in the state-space (see Section \ref{subsec:reachability}), using matrix or graphical methods \cite{Kemeny}. However there do not exist methods for systematically finding all the closed communication classes for stochastic reaction network models with infinite state-spaces. The main goal of this paper is to develop such a method that can \emph{provably} find all such classes under biologically reasonable assumptions on the network. This has important implications regarding the long-term behavior of the stochastic model because these closed communication classes support the stationary distributions (see Section \ref{subsec:appl}), which are like attracting fixed-points for the CME \eqref{defn_cme} in the space of probability distributions. Moreover each closed communication class serves as an \emph{irreducible state-space} for the underlying CTMC (see Section \ref{subsec:reachability}). For this reason, we refer to a closed communication class as an irreducible state-space in this paper.

For many networks, the state-space $\N^d_0$ is simply \emph{too large}, in the sense that it contains several extraneous states that are never visited by the dynamics. This is mainly due to the \emph{conservation relations} present in the network which impose constraints on the copy-number ranges of the involved species. Hence our first task is to develop a systematic procedure to \emph{weed-out} these extraneous states and obtain a \emph{smaller} non-empty set $\mathcal{E}_0$ which is also a valid state-space (i.e.\ \eqref{statespacecond} holds with $\N^d_0$ replaced with $\mathcal{E}_0$). Furthermore we would like the representation of $\mathcal{E}_0$ to be explicit enough to ensure that the copy-number ranges of various species can be easily identified. Apart from enabling the search for irreducible state-spaces, which is the primary goal of this paper, this explicit state-space representation allows us to gain a better understanding of the network dynamics. We now explore these issues in a greater detail.

Let us return to the CTMC $( X(t) )_{t \geq 0 } $ starting at some initial state $x_0 \in \N^d_0$. The ideal or the smallest state-space $\mathcal{E}$ for this process would simply be the set of all states in $\N^d_0$ that the reaction dynamics has a positive probability of reaching in a finite time\footnote{The choice of this finite time is not important because for a CTMC if $p_{x_0}(t,y) >0$ for some $t > 0$ then the same holds for all $t > 0$ (see Theorem 3.2.1 in \cite{Norris})}, i.e.
\begin{align}
\label{ideal_state_space}
\mathcal{E} = \{ y \in \N^d_0 :  p_{x_0}(t,y) > 0 \},
\end{align}
for some $t > 0$. The set $\mathcal{E}$ is non-empty (as $x_0 \in \mathcal{E}$) and using Chapman-Kolmorogov inequalities (see \cite{Norris}) one can easily check that $\mathcal{E}$ is a valid state-space and so the CTMC $(X(t))_{t \geq 0 }$ resides in this set throughout its trajectory. However the set $\mathcal{E} $ is difficult to characterize because the probabilities $p_{x_0}(t,\cdot)$ are usually unknown. Hence we look for a bigger set $\mathcal{E}_0$ which contains $\mathcal{E} $ and is also a valid state-space. The standard choice is the \emph{stoichiometry compatibility class} (see \cite{DASurvey}) defined by
\begin{align}
\label{stoichcompatibility}
\mathcal{E}_0 = ( x_0 + \textnormal{Range}(S)) \cap \N^d_0,
\end{align}
where $S = \textnormal{Col}(\zeta_1,\dots,\zeta_K)$ is the $d \times K$ \emph{stoichiometry matrix} whose columns are the reaction stoichiometry vectors and $\textnormal{Range}(S)$ is the range or column space of $S$. To see the containment $\mathcal{E} \subset \mathcal{E}_0$ note that if $y \in \mathcal{E}$ then $y$ must be reachable from $x_0$ in finitely many transition steps in each of the directions $\zeta_1,\dots,\zeta_K$. Let $r_k$ be the number of steps needed in direction $k$, then setting $r = (r_1,\dots, r_K) \in \N^K_0$ we must have $y = x_0 + S r$ which ensures that $(y - x_0) \in \textnormal{Range}(S)$ and hence $y \in \mathcal{E}_0$. We next write $\mathcal{E}_0$ in terms of conservation relations for the network, which are the nonzero vectors in the \emph{left nullspace} of the stoichiometry matrix $S$
\begin{align}
\label{defn_consrels}
\mathcal{L}(S) = \{ \gamma \in \R^d : \gamma^T S  = \mathbf{0}^T\},
\end{align}
where $\mathbf{0}$ is the vector of all zeros in $\R^K$. For any nonzero $\gamma = (\gamma_1,\dots,\gamma_d)  \in \mathcal{L}(S)$, the relation $\gamma^T S  = \mathbf{0}^T$ implies that for each reaction $k=1,\dots,K$, the stoichiometry vector $\zeta_k$ is orthogonal to $\gamma$, i.e. $\langle \gamma, \zeta_k \rangle =0$, where $\langle \cdot, \cdot \rangle$ denotes the standard inner product in $\R^d$. Therefore the stochastic reaction dynamics $(X(t))_{t \geq 0}$ will satisfy
\begin{align}
\label{conservationimpl1}
\langle \gamma, X(t) \rangle = \langle \gamma, X(0) \rangle =\langle \gamma, x_0 \rangle  \qquad \textnormal{for all } t \geq 0,
\end{align}
which means that the copy-numbers of species included in the support set 
\begin{align}
\label{suppsetgamma}
\textnormal{supp}(\gamma) :=\{ i=1,\dots,d:  \gamma_i  \neq 0 \}
\end{align}
conserve the linear constraint specified by vector $\gamma$. Suppose that for some integer $n>0$ we have $\textnormal{Rank}(S) = (d - n)$ and so the dimension of the subspace $\mathcal{L}(S) $ is $n$. Let $\{  \gamma_1,\dots,\gamma_n \}$ be a \emph{basis} for $\mathcal{L}(S)$ and define a $d \times n$ matrix $\Gamma$ and a $n \times 1$ vector $c$ by
\begin{align}
\label{defn_consdata}
\Gamma = \textnormal{Col}( \gamma_1,\dots,\gamma_n) \qquad \textnormal{and} \qquad c = \Gamma^T x_0.
\end{align}
From \eqref{conservationimpl1} and the orthogonality of vector spaces $\mathcal{L}(S)$ and $\textnormal{Range}(S)$, we can equivalently express the set $\mathcal{E}_0$ as   
\begin{align}
\label{defn_statespacewithconser}
\mathcal{E}_0 = \{ x \in \N^d_0 :   \Gamma^T x = c\}.
\end{align}
Taking a cue from the terminology used in \cite{Schuster}, we call $\Gamma$ as the \emph{conservation matrix}, $c$ as the \emph{conservation vector} and the pair $(\Gamma,c)$ as the \emph{conservation data} for the network. If the subspace $\mathcal{L}(S)$ is trivial (i.e. $n=0$) then $\mathcal{E}_0 = \N^d_0$, but in the other situation when this subspace is nontrivial (i.e. $n>0$), the states in $\N^d_0$ outside $\mathcal{E}_0$ are not reachable by the dynamics because $\mathcal{E}_0$ is a valid state-space for the CTMC $ ( X(t) )_{t \geq 0 }$ and it contains the initial state $X(0) =x_0$.

The representations \eqref{stoichcompatibility} or \eqref{defn_statespacewithconser} for $\mathcal{E}_0$ are quite abstract, making it difficult to get a sense of the dynamics and particularly the copy-number ranges of all the species. We remedy this problem by developing a method that systematically screens the space of conservation relations $\mathcal{L}(S)$ and expresses $\mathcal{E}_0 $ in a more explicit form. We briefly describe the steps needed for this purpose. We call a nonzero vector $\gamma \in \mathcal{L}(S)$ a \emph{semi-positive} conservation relation if all its nonzero entries are positive. Up to independence, the only other type of conservation relations are the \emph{mixed-sign} ones which have at least two nonzero entries with opposite signs. Semi-positive conservation relations ensure that the species in their support-sets (see \eqref{suppsetgamma}) are \emph{bounded} in the sense that their copy-numbers have a bounded range. This is evident from \eqref{conservationimpl1} because if $\gamma$ is a semi-positive conservation relation then for any $i \in \textnormal{supp}(\gamma) $, we have $\gamma_i > 0$ and since $\langle \gamma, X(t) \rangle = \sum_{j \in  \textnormal{supp}(\gamma)} \gamma_j X_j(t)$, we must also have $X_i(t) \in [0, \langle \gamma, x_0 \rangle/\gamma_i]$. We scan the space of semi-positive conservation relations and identify all the \emph{bounded} species along with their suitable copy-number ranges. It is possible that these semi-positive relations do not span the whole space of conservation relations $\mathcal{L}(S)$, and so we then look for any mixed-sign conservation relations among the remaining \emph{unbounded} species. We show that such conservation relations force a certain subset of \emph{unbounded} species (called \emph{restricted} species) to mimic the dynamics of the remaining \emph{unbounded} species (called \emph{free} species) according to an appropriately constructed affine map. For example, the following ATP-hydrolysis reaction occurs in many living cells:
\begin{align*}
\mathbf{ATP}(\textnormal{\small adenosine triphosphate}) + \mathbf{H_2 O}(\textnormal{\small water}) \xrightleftharpoons[]{}  \mathbf{ADP}(\textnormal{\small adenosine diphosphate}) + \mathbf{P}(\textnormal{\small phosphate}) .
\end{align*}
Generally $\mathbf{ATP}$ and $\mathbf{H_2 O}$ molecules are present in high concentrations in the cytosol, and so the dynamics of low copy-number species $\mathbf{S}_1 = \mathbf{ADP}$ and $\mathbf{S}_2 = \mathbf{P}$ can be well approximated by the simple network
\begin{align}
\label{simpl:atphydrolysis}
\emptyset \longrightarrow \mathbf{S}_1 + \mathbf{S}_2 \longrightarrow \emptyset.
\end{align}
The stoichiometry matrix for this network is
\begin{align*}
S = \left[ 
\begin{array}{cc}
1 & -1\\
1 & -1
\end{array}
\right]
\end{align*}
and the space $\mathcal{L}(S)$ of conservation relations has dimension $1$. One can see that there are no semi-positive conservation relations and the only independent conservation relation is $\gamma = (1,-1)$ which is mixed-sign. Moreover \eqref{conservationimpl1} forces $X_1(t) = \phi( X_2(t) )$ for all $t \geq 0$, where $\phi$ is the affine map given by $\phi(x) = x + \langle \gamma,x_0 \rangle$.

In large networks there are several conservation relations and each species can participate in many such relations. To account for all the possibilities, we will employ standard linear-algebraic methods, such as basic matrix manipulations, solving linear-algebraic systems and Linear Programs (LPs) \cite{Boyd}, to classify each species as one of three types: \emph{free}, \emph{bounded} or \emph{restricted}. Under fairly general conditions satisfied by most biological networks, we prove that by relabeling the species, we can express the state-space $\mathcal{E}_0$ in a special decomposed form
\begin{align}
\label{defn_dss}
\mathcal{E}_0 = \mathcal{E}_b \times \Phi,
\end{align}
where $\mathcal{E}_b$ is a finite set in $\N^{d_b}_0$ and $\Phi$ is the \emph{graph} of an \emph{affine} function $\phi : \R^{d_f}  \to \R^{d_r}_0$ restricted to the nonnegative integer orthant, i.e.
\begin{align}
\label{defngraphphi}
\Phi = \left\{ (x, \phi(x)) :  x \in \N^{d_f}_0 \ \  \textnormal{and} \  \  \phi(x) \in \N^{d_r}_0  \right\}.
\end{align}
Here $d_f,d_b$ and $d_r$ are nonnegative integers denoting the number of  \emph{free}, \emph{bounded} and \emph{restricted} species respectively. The finite set $ \mathcal{E}_b \subset  \N^{d_b}_0$ contains the dynamics of \emph{bounded} species, while the infinite set $\Phi \subset \N^{d_f +d_r}_0 $ serves as a state-space for the dynamics of both \emph{free} and \emph{restricted} species, with the latter being ``locked" in a fixed affine relationship (given by function $\phi$) with the former. Notice that in comparison to both \eqref{stoichcompatibility} or \eqref{defn_statespacewithconser}, the form \eqref{defn_dss} for state-space $\mathcal{E}_0$ is \emph{more explicit} as it clearly expresses the copy-number ranges for each species as well as the relationships among them. This enables a better understanding of the dynamics which can be leveraged to improve the efficiency of existing analytical methods. For instance, one can use the form \eqref{defn_dss} to design optimal state-space truncations for the \emph{Finite State Projection} (FSP) method for solving CMEs \cite{FSP}. One can also use this form as a guide for automatically separating the high-copy-number and the low-copy-number species for the \emph{method of conditional moments} (MCM) approach \cite{Theis} or for deriving suitable hybrid approximations of the dynamics \cite{Benni}. We do not explore these ideas in this paper but instead focus on using this form for the analysis of the long-term behavior of the underlying Markov process.

Suppose there exists a conservation relation in $\mathcal{L}(S)$ such that all its $d$ components are strictly positive. Such a situation generally occurs when the network satisfies some form of global \emph{mass conservation relation} and in this case all the species are \emph{bounded} (i.e. $d_b =d$, $d_f = d_r = 0$), and so the state-space $\mathcal{E}_0 = \mathcal{E}_b$ is finite. Using elementary matrix or graph-theoretic methods \cite{Kemeny} we can easily find all the closed communication classes or the irreducible state-spaces (see Section \ref{subsec:reachability}) $\mathcal{E}_1,\dots,\mathcal{E}_Q$ within $\mathcal{E}_0$, where the dynamics eventually lies starting from any initial state $x_0$. These classes are mutually disjoint and each $\mathcal{E}_q$ supports a unique stationary distribution $\pi_q$ for the stochastic reaction dynamics. Moreover these distributions $\pi_1,\dots,\pi_Q$ are the \emph{extremal points} of the simplex formed by all the stationary distributions of the network
\begin{align}
\label{stat_simplex}
\Sigma = \left\{ \pi = \sum_{q=1}^Q \alpha_q \pi_q: \quad \textnormal{each} \quad \alpha_1,\dots,\alpha_Q \geq 0 \quad \textnormal{and} \quad \sum_{q=1}^Q \alpha_q =1 \right\}.
\end{align} 
Typically for reaction networks in Systems Biology, a conservation relation with all strictly positive entries does not exist. This is because such networks are abstract representations of the actual processes where many details, such as the dynamics of abundant species (like ATP molecules, enzymes etc.), are intentionally omitted to make the analysis more tractable and pertinent to a given problem. Consequently, unlike classical chemical kinetics, global mass conservation fails because mass is allowed to be created and destroyed. Hence it is important to consider this general case, where \emph{free} and \emph{restricted} species are present, and the state-space $\mathcal{E}_0$ must be necessarily infinite provided certain natural assumptions are satisfied (see Assumption \ref{massacttype}). In such a scenario the stationary distributions do not always exist, but if the existence can be guaranteed on each irreducible state-space (see Section \ref{subsec:reachability}), then again the extremal points $\pi_1, \pi_2,\dots$ of the simplex of stationary distributions $\Sigma$ are the unique stationary distributions supported on all the irreducible state-spaces $\mathcal{E}_1,\mathcal{E}_2,\dots,$ within the infinite state-space $\mathcal{E}_0$. The primary goal of this paper is to develop a method that explicitly identifies all these irreducible state-spaces using the decomposed state-space form \eqref{defn_dss}. For this purpose we adapt and generalize the ideas contained in \cite{GuptaIFAC}, with the main observation being that for most biological networks, the \emph{free} species can be organized in the form of \emph{birth and death cascades}, depending on the minimum number of reactions that the species requires to be created from nothing (denoted by $\emptyset$) or get reduced to it. As our examples suggest, for many Systems Biology networks these cascades have a natural correspondence to gene expression or signaling stages in the dynamics (see Section \ref{sec:examples}). Combining this cascade construction along with matrix methods that are used in the finite state-space case, our method provably determines all the irreducible state-spaces for a large class of networks satisfying some biologically reasonable criteria. The knowledge of irreducible state-spaces has several important applications which we discuss in Section \ref{subsec:appl}.

In comparison to the method in \cite{GuptaIFAC}, the method we develop in this paper is far more versatile and can analyze a much larger class of networks arising in Systems and Synthetic Biology. The method in \cite{GuptaIFAC} has several shortcomings that severely restrict its range of applicability. Most prominently, it only applies to networks where there are no mixed-sign conservation relations (i.e. no \emph{restricted} species) and all the \emph{free} species can be arranged in birth-cascades. These requirements are often violated by important networks (see Section \ref{sec:examples}) and this motivated us to devise the method presented in this paper, which can handle all types of conservation relations and intricate interactions among the dynamics of \emph{bounded}, \emph{free} and \emph{restricted} species. The main novelty of the approach we develop in this paper is in the elaborate construction of \emph{tree-like} structures to define reachability relationships between birth and death cascades (see Section \ref{sec:freebddspecies}). It is shown that the \emph{leaves} of these cascade trees contain all the information about irreducible state-spaces.

This paper is organized as follows. In Section \ref{sec:prelim} we introduce some preliminary concepts that will be used throughout the paper. The method for computing the decomposed form of state-space is explained in Section \ref{sec:computingdss} and the process for finding all the irreducible state-spaces is described in Section \ref{sec:irredclasses}. The algorithms for implementing these procedures are provided in Section \ref{sec:algorithms}. In Section \ref{sec:examples} we illustrate the applicability and usefulness of our methods for state-space analysis by considering several examples from Systems Biology. In particular we discuss how this state-space analysis can contribute towards our understanding of the underlying biological process. In Section \ref{sec:conclusion} we conclude and the detailed proof of our main result, Theorem \ref{maintheorem} is given in the Appendix.

\section*{Notation}

Most of the notation used in this paper has already been introduced in Section \ref{sec:intro}. However some additional notation and clarifications are needed which we mention now. 
For any set $A$, we denote its cardinality by $|A|$. The vector of all zeros in any dimension is denoted by ${\bf 0}$. Similarly in any dimension $d$, the $i$-th standard basis is denoted by $e_i$ and it is the vector whose  $i$-th entry is $1$ while the rest are zeros. The identity matrix is denoted by ${ \bf I}$. For any $m \times n$ matrix $M$ and any $k\leq l \leq m$, the $(l-k+1) \times n$ matrix formed by rows $(k+1),(k+2),\dots,l$ of matrix $M$ is denoted by $\textnormal{Proj}(M,k,l)$.  If $v_1,\dots,v_n$ are the columns of $M$ then for any $A \subset \R$, the set $\textnormal{Colspan}_A(M)$ stands for
\begin{align*}
\left\{x \in \R^m :  x = \sum_{i=1}^n a_i v_i \textnormal{ for some } a_1,\dots,a_n \in A\right\}.
\end{align*}
The dimension of any vector space $V$ is denoted by $\textnormal{dim}(V)$ and this vector space is called trivial if $\textnormal{dim}(V)=0$. While multiplying a matrix with a vector we always regard the vector as a column vector. All inequalities involving vectors or matrices must be interpreted componentwise.

\section{Preliminaries} \label{sec:prelim}

Consider the reaction network $\mathcal{N}$ described in Section \ref{sec:intro} with $d$ species and $K$ reactions of the form \eqref{formofthereaction}. Let $\Lambda : \N^d_0 \to \R^K_+$ be the \emph{propensity} map given by
\begin{align}
\label{defn_Lambda}
\Lambda(x) = \left(  \lambda_1(x),\dots,\lambda_K(x) \right).
\end{align}
We represent this network by the triplet $\mathcal{N} = ( \mathcal{V}  , \mathcal{O}  ,\Lambda)$, where $\mathcal{V} $ and $\mathcal{O}$ are two $d \times K$ matrices whose entries at row $i$ and column $k$ are given by $ \nu_{ik}$ and $\rho_{ik}$ respectively. We call these matrices as the \emph{reactant} matrix and the \emph{product} matrix respectively, because they tabulate the number of molecules of each species that are created and removed by each of the reactions. Note that the stoichiometry matrix $S$ for this network is simply $S =(\mathcal{O} -  \mathcal{V})$. The left nullspace $\mathcal{L}(S)$ of this matrix is the space of all conservation relations for the network and as explained in Section \ref{sec:intro}, the constraints imposed by these relations can be described by some conservation data $(\Gamma,c)$, and with this data at hand the designated state-space $\mathcal{E}_0$ is given by \eqref{defn_statespacewithconser}. In the rest of this section we present some concepts and assumptions associated with the reactions networks we consider in this paper.

\subsection{The reachability relation and irreducible state-spaces}\label{subsec:reachability}

 For any $x,y \in \mathcal{E}_0$ let $p_x(t,y)$ (see \eqref{transition_probabilities}) be the probability that the stochastic dynamics starts at $x$ and reaches $y$ at time $t$. If $p_x(t,y) >0$ for some $t\geq 0$, then we say that state $y$ is \emph{reachable} from state $x$, and we denote this relation as
\begin{align}
\label{reln_reachability}
x \stackrel{  \mathcal{N}  }{\longrightarrow} y.
\end{align}
This relation does not depend on the time-value $t$ (see Theorem 3.2.1 in \cite{Norris}) and it is {\bf transitive}: i.e. for any $x,y,z \in \mathcal{E}$, if $x \stackrel{  \mathcal{N}  }{\longrightarrow} y$ and $y \stackrel{  \mathcal{N}  }{\longrightarrow} z$ then $x \stackrel{  \mathcal{N}  }{\longrightarrow} z$ (see Chapter 6 in \cite{Grimmett}). We say that two states $x,y \in  \mathcal{E}_0$ \emph{communicate} if we have both $x \stackrel{  \mathcal{N}  }{\longrightarrow} y$ and $y \stackrel{  \mathcal{N}  }{\longrightarrow} x$, and this relation is denoted by 
\begin{align*}
 x \stackrel{  \mathcal{N}  }{\longleftrightarrow} y.
\end{align*}
It is known that $ \stackrel{  \mathcal{N}  }{\longleftrightarrow}$ is an \emph{equivalence} relation on $\mathcal{E}_0$ and it partitions the set $\mathcal{E}_0$ into disjoint equivalence classes which are referred to as \emph{communication classes} in the Markov chain literature (see \cite{Norris}). The communication classes can be further classified as \emph{closed} or \emph{open}. A communication class $\mathcal{E} \subset \mathcal{E}_0$ is called \emph{closed} if and only if for any $x \in  \mathcal{E}$ and $y \in \mathcal{E}_0$, if $x \stackrel{  \mathcal{N}  }{\longrightarrow} y$ then $y \in \mathcal{E}$. By definition a closed communication class $\mathcal{E}$ needs to be closed under the reaction dynamics (i.e.  \eqref{statespacecond} holds with $\N^d_0$ replaced with $\mathcal{E}$), and so it is a valid state-space for the underlying CTMC. Moreover as all the states in $\mathcal{E}$ are reachable from each other, $\mathcal{E}$ is an \emph{irreducible} state-space for the CTMC. Therefore we refer to such a set $\mathcal{E}$ as an irreducible state-space in the paper.

When the state-space $\mathcal{E}_0$ is finite, then all the irreducible state-spaces correspond exactly to all the positive-recurrent classes for the underlying Markov chain (see \cite{Norris}), and starting from any initial state in $\mathcal{E}_0$, the CTMC will eventually get \emph{trapped} in one of these disjoint irreducible state-spaces. The same holds true for infinite state-spaces if the existence of stationary distributions on all the irreducible state-spaces can be guaranteed. This can be done using methods developed by Meyn and Tweedie \cite{MeynandTweedieBook,Meyn}. In particular, Theorem 4.5 in \cite{Meyn} shows that stationary distributions will exist if one can find a \emph{Foster-Lyapunov} function $V : \mathcal{E}_0 \to \R_+$ satisfying $V(x) \to \infty$ as $\|x\| \to \infty$, such that the $\R_+$-valued process $ ( V( X(t) ) )_{t  \geq 0 }$ experiences a negative drift outside some compact (finite) set $C \subset \mathcal{E}_0$. In a recent paper \cite{GuptaPLOS} we develop a computational framework for constructing such Foster-Lyapunov functions for a large class of biochemical reaction networks, which includes several well-known examples from Systems and Synthetic Biology.

The problem of finding closed communication classes for stochastic reaction networks is quite challenging, as establishing the reachability \eqref{reln_reachability} between any two states $x,y \in \mathcal{E}_0$ is tantamount to  showing that there exists a sequence of $n$ reactions $k_1,\dots,k_n \in \{1,\dots,K\}$ such that $y = x + \sum_{i=1}^n \zeta_{k_i} $ and $ \lambda_{k_j} (z_j) >0$ for each $j=1,\dots,n$, where $ z_j = x + \sum_{i=1}^{j-1} \zeta_{k_i} $. These conditions ensure that starting from state $x$, firing of reactions $k_1,\dots,k_n$ in this order, takes the state to $y$, and this firing of reactions is a positive-probability event because at all the \emph{intermediate} states ($z_j$-s), the propensity $ \lambda_{k_j} (z_j) $ for the next reaction in this sequence is positive. When the state-space $\mathcal{E}_0$ is finite, matrix methods (see \cite{Kemeny}) can be used for computing the closed communication classes, without having to explicitly find any positive-probability reaction sequences between states. In this paper we provide an extension of this approach that can handle a large class of biological networks with infinite state-spaces (see Section \ref{sec:irredclasses}). One of the main difficulty that we have to deal with is that some reaction channels may \emph{switch-off} at certain states, due to their propensities being zero at those states, and hence the set of possible transition directions is not the same for all the states in $\mathcal{E}_0$. To account for this \emph{switching-off} of reactions we need to impose some conditions on the propensity functions, as we mention in Section \ref{sec:cond_prop_func}. However, first we discuss the applications of our computational method for identifying all the closed communication classes or irreducible state-spaces for stochastic reaction networks with infinite state-spaces.

\subsection{Applications} \label{subsec:appl}

It is interesting to note that for many networks, simply knowing the irreducible state-spaces $\mathcal{E}_1, \mathcal{E}_2,\dots$ allows one to compute the corresponding stationary distributions $\pi_1,\pi_2,\dots$, either analytically (see \cite{ACKProd} and Example \ref{ex:productdistr}) or numerically \cite{sFSP}. Knowledge of these irreducible state-spaces has many other important consequences. For example, if the initial state $X(0)$ of the stochastic reaction dynamics $ ( X(t) )_{t \geq 0}$ lies in the irreducible state-space $\mathcal{E}_q$, then the ideal state-space $\mathcal{E}$ (see \eqref{ideal_state_space}) consisting of \emph{only} the reachable states, coincides exactly with $\mathcal{E}_q$. Furthermore due to Theorem 1.10.2 in \cite{Norris}, for any bounded real-valued function $f$ on $\mathcal{E}_q$ we have
\begin{align}
\label{ergodicconvergenceoff}
\lim_{t \to \infty} \E( f(X(t)) ) &= \sum_{y \in  \mathcal{E}_q } f(y) \pi_q(y)
 \end{align}
 and the following limit holds with probability $1$
 \begin{align}
 \label{resultLLN}
\lim_{t \to \infty} \frac{1}{t} \int_{0}^{t} f(X(s))ds &= \sum_{y \in \mathcal{E}_q } f(y) \pi_q(y).
\end{align} 
Using \eqref{ergodicconvergenceoff} one can show that various statistical quantities (means, variances, covariances etc.) associated with the stochastic reaction dynamics converge to their \emph{steady state} values as $t \to \infty$ (see \cite{GuptaPLOS}), which is helpful in the design of controllers that can robustly steer the mean copy-number of some species to specific reference values \cite{Briat}. Relation \eqref{resultLLN} shows that the stationary distribution of the population can be computed by evaluating the proportion of time spent in various states by a single stochastic trajectory $(X(t))_{ t \geq 0}$ over a long period of time. Such an insight can help in leveraging experimental techniques such as \emph{Flow-Cytometry} and \emph{Time-Lapse Microscopy} in the study of isogenic cell populations (see \cite{GuptaPLOS}). One can also use \eqref{resultLLN} to \emph{speed-up} the estimation of the stationary distribution $\pi_q$ using Monte Carlo simulations. Moreover the knowledge about the exact support of $\pi_q$ (viz. $\mathcal{E}_q$) can be used to sample from this stationary distribution more efficiently. Commonly we find that for networks arising in Systems Biology there is only one irreducible state-space $\mathcal{E}_1$ within $\mathcal{E}_0$ and hence the simplex $\Sigma$ collapses to a unique stationary distribution $\pi_1$ (see Section \ref{sec:examples}). In such a scenario the underlying CTMC is \emph{ergodic}, and relations \eqref{ergodicconvergenceoff} and \eqref{resultLLN} hold for any bounded real-valued function $f$ on the full state-space $\mathcal{E}_0$ and for any initial condition $X(0) \in \mathcal{E}_0$. Checking ergodicity of networks is important for many applications. For example, under ergodicity one can apply tools from \emph{Transition Path Theory} \cite{Metzner} to study the topology of networks by analyzing the statistical properties of trajectories that \emph{flow} between two subsets of the irreducible state-space. Another important application area where checking ergodicity is crucial, is for \emph{speeding-up} the stochastic simulations of multiscale networks, which have reactions firing at multiple timescales \cite{Kang}. Such networks are common in Systems Biology and it is known that their exact stochastic simulation, using Gillespie's SSA for example, is highly cumbersome, because most of the simulation time is spent in generating the \emph{fast} reaction events. To circumvent these problems approximate simulation approaches have been developed \cite{ssSSA,weinan1,weinan2} that apply the \emph{quasi-stationary assumption} (QSA) on \emph{fast} ergodic subnetworks, by supposing that their stochastic dynamics relaxes to stationarity between subsequent reactions at the \emph{slower} timescales. These \emph{fast} subnetworks can change their structure, along with their ergodic properties, depending on the states of the \emph{slow} variables which may determine the set of available \emph{fast} reactions. In this context, our results provide a way for automated discovery of \emph{fast} ergodic subnetworks, during the simulation run, and aid the correct application of QSA. We illustrate this point through an example in Section \ref{ex:toxin}. This example also shows how \emph{restricted} species can arise naturally when a \emph{fast} subnetwork within the bigger network, is considered in isolation.

\subsection{Conditions on the propensity functions}  \label{sec:cond_prop_func}

To facilitate the search for irreducible state-spaces, we shall assume that the network $\mathcal{N} = ( \mathcal{V}  , \mathcal{O}  ,\Lambda)$ satisfies the following:
\begin{assumption}
\label{massacttype}
For each reaction $k=1,\dots,K$ and each $x = (x_1,\dots,x_d) \in \mathcal{E}_0$, we have $\lambda_k(x) > 0$ if and only if $x_i \geq \nu_{ik}$ for every $i=1,\dots,d$.
\end{assumption}
In other words, at state $x$, reaction $k$ has a positive probability of firing if any only if for each species $\mathbf{S}_i$, the number of available molecules ($x_i$) exceeds the number of molecules consumed by the reaction ($\nu_{ik}$). The ``only if" part of this condition is almost always satisfied, because a reaction cannot fire unless for each species, the required number of molecules are present for consumption, but the ``if" part of this condition may get violated if the propensity function for a reaction is zero even though all the required molecules for having the reaction are present. However such situations do not typically arise for networks in  Systems Biology as we now explain.

Observe that Assumption \ref{massacttype} is certainly satisfied if we have \emph{mass-action kinetics} \cite{Goutsias} where each propensity function $\lambda_k : \N^d_0 \to \R_+$ has the form
\begin{align}
\label{massactionkinetics}
\lambda_k(x) = \theta_k \prod_{i =1}^d \frac{ x_i(x_i-1)\dots (x_i - \nu_{ik} +1 ) }{  \nu_{ik} ! },
\end{align}
for some rate constant $\theta_k > 0$. Apart from mass-action kinetics, networks in Systems Biology generally have propensity functions describing either Michaelis--Menten kinetics for enzyme-substrate interactions or Hill kinetics for ligand-protein binding dynamics \cite{Keener}. In both these cases, the propensity functions have a rational form $\lambda_k(x) =  p_k (x) / q_k (x) $, where the denominator $q_k(x)$ is always positive and the numerator $p_k (x) $ satisfies the criterion in Assumption \ref{massacttype}. Consequently the network satisfies  Assumption \ref{massacttype} even though the propensity functions are not of mass-action form \eqref{massactionkinetics}. Furthermore even if a network does not satisfy Assumption \ref{massacttype}, it can often be modified in such a way that this assumption is satisfied and its dynamics remains the same (see Section \ref{ex:ge}). We end this section with a simple proposition.
\begin{proposition}
\label{prop_reach_reln_add}
Consider a network $\mathcal{N}$ satisfying Assumption \ref{massacttype} with conservation data $(\Gamma,c)$ and state-space $\mathcal{E}_0$ given by \eqref{defn_statespacewithconser}. Then the reachability relation $\stackrel{  \mathcal{N}  }{\longrightarrow} $ is {\bf positively additive} on $\mathcal{E}_0$, which is to say that for any $x,y\in \mathcal{E}_0$ and $z \in \N^d_0$, if $x \stackrel{  \mathcal{N}  }{\longrightarrow} y$ and $(x+z) \in \mathcal{E}_0$ then we also have $(y+z) \in \mathcal{E}_0$ and $(x+z) \stackrel{  \mathcal{N}  }{\longrightarrow} (y+z)$. 
\end{proposition}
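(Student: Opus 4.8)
The plan is to fire, from the shifted state $x+z$, the very same reaction sequence that witnesses $x \stackrel{\mathcal{N}}{\longrightarrow} y$, and then to verify that each step of this shifted firing is legitimate. First I would unpack the hypothesis $x \stackrel{\mathcal{N}}{\longrightarrow} y$ using the description in Section \ref{subsec:reachability}: there are reactions $k_1,\dots,k_n \in \{1,\dots,K\}$ with $y = x + \sum_{i=1}^n \zeta_{k_i}$ such that, writing $z_j = x + \sum_{i=1}^{j-1}\zeta_{k_i}$ for the intermediate states (so $z_1 = x$ and $z_n + \zeta_{k_n} = y$), one has $\lambda_{k_j}(z_j) > 0$ for every $j=1,\dots,n$. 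The degenerate case $n=0$, i.e.\ $x = y$, is immediate since the reachability relation is reflexive, so assume $n \geq 1$.

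The first key point is that $\Gamma^T z = \mathbf{0}$: indeed $x \in \mathcal{E}_0$ gives $\Gamma^T x = c$, while $(x+z) \in \mathcal{E}_0$ gives $\Gamma^T x + \Gamma^T z = c$, and subtracting yields $\Gamma^T z = \mathbf{0}$. Since each partial sum $\sum_{i=1}^{j-1}\zeta_{k_i}$ lies in $\textnormal{Range}(S)$, it is orthogonal to the columns of $\Gamma$ (which span $\mathcal{L}(S)$), so $\Gamma^T z_j = \Gamma^T x = c$, and hence $\Gamma^T(z_j + z) = c$ for every $j$; in particular $\Gamma^T(y+z) = c$. Because $z_j, z \in \N^d_0$ we also have $z_j + z \in \N^d_0$ and $y + z \in \N^d_0$. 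By the representation \eqref{defn_statespacewithconser} of $\mathcal{E}_0$ it follows that $z_j + z \in \mathcal{E}_0$ for all $j$ and $y + z \in \mathcal{E}_0$, which already delivers the first conclusion of the proposition.

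The second ingredient is the monotonicity encoded in Assumption \ref{massacttype}. For each $j$, positivity of $\lambda_{k_j}(z_j)$ gives $(z_j)_i \geq \nu_{i k_j}$ for all $i$; since $z \geq \mathbf{0}$ componentwise, $(z_j + z)_i \geq (z_j)_i \geq \nu_{i k_j}$ for all $i$. As $z_j + z \in \mathcal{E}_0$, the ``if'' direction of Assumption \ref{massacttype} applies at this state and yields $\lambda_{k_j}(z_j + z) > 0$. Therefore, starting from $x + z = z_1 + z$, the reactions $k_1,\dots,k_n$ can be fired in this order through the states $z_1 + z,\, z_2 + z,\, \dots,\, z_n + z$, ending at $z_n + z + \zeta_{k_n} = y + z$, with every firing having positive propensity at the state where it occurs. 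This is precisely a positive-probability reaction sequence from $x+z$ to $y+z$, so $(x+z) \stackrel{\mathcal{N}}{\longrightarrow} (y+z)$.

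I do not expect a genuine obstacle, as the statement amounts to careful bookkeeping; the one subtlety worth flagging is that it is the conjunction of $x \in \mathcal{E}_0$ and $(x+z) \in \mathcal{E}_0$ that forces $\Gamma^T z = \mathbf{0}$, and this is exactly what is needed to keep every shifted intermediate state $z_j + z$ inside $\mathcal{E}_0$ — which in turn is what licenses invoking Assumption \ref{massacttype} to recover propensity positivity after the shift.
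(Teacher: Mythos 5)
Your proof is correct and follows essentially the same route as the paper's: shift the positive-probability reaction sequence witnessing $x \stackrel{\mathcal{N}}{\longrightarrow} y$ by $z$ and use the monotonicity in Assumption \ref{massacttype} to preserve positivity of each propensity. The only difference is that you spell out the bookkeeping (in particular that $\Gamma^T z = \mathbf{0}$ keeps every shifted intermediate state, and $y+z$, inside $\mathcal{E}_0$, which is needed to invoke the ``if'' direction of Assumption \ref{massacttype}), a point the paper's one-line proof leaves implicit.
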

\begin{proof}
This is a simple consequence of Assumption \ref{massacttype} because for any reaction $k$ and state $u \in \mathcal{E}_0$, if $\lambda_k( u ) >0$ then $\lambda_k(u+z) >0$ for any $z \in \N^d_0$. Therefore the positive-probability reaction sequence that leads the state from $x$ to $y$, also serves as a positive-probability reaction sequence that takes the state from $(x+z)$ to $(y+z)$ (recall the discussion in Section \ref{subsec:reachability}).
\end{proof}

\subsection{Inverse of a reaction network} \label{sec:defninverse}

We now define the inverse $\mathcal{N}_{ \textnormal{inv} } = (\mathcal{V}_{ \textnormal{inv} }, \mathcal{O}_{ \textnormal{inv} },\Lambda_{\textnormal{inv}} )$ of the reaction network $\mathcal{N} = ( \mathcal{V}  , \mathcal{O}  ,\Lambda)$, which is obtained by \emph{flipping} the arrows in \eqref{formofthereaction}. In other words, the $K$ reactions in $\mathcal{N}_{ \textnormal{inv} } $ are given by
\begin{align}
\label{invformofthereaction}
\sum_{i=1}^d \rho_{ik} \mathbf{S}_i \longrightarrow \sum_{i=1}^d \nu_{ik} \mathbf{S}_i, \qquad k=1,\dots,K.
\end{align} 
Hence $\mathcal{V}_{ \textnormal{inv} } =\mathcal{O}$, $\mathcal{O}_{ \textnormal{inv} }= \mathcal{V}$ and we define the propensity map $$\Lambda_{\textnormal{inv}} (x) = \left( \lambda_{1,\textnormal{inv}} (x),\dots,\lambda_{K,\textnormal{inv}}(x) \right)$$ by letting each $\lambda_{k,\textnormal{inv}}$ to have the mass-action form (see \eqref{massactionkinetics}) with the rate constant $\theta_k=1$. Observe that the stoichiometry matrix $S_{ \textnormal{inv} }$ for $\mathcal{N}_{ \textnormal{inv} } $ is simply the negative of the stoichiometry matrix for $\mathcal{N} $, and so the space of conservation relations \eqref{defn_consrels} as well as the state-space $\mathcal{E}_0$ \eqref{defn_statespacewithconser} remain the same for both networks.

By construction, $\mathcal{N}_{ \textnormal{inv} }$ always satisfies Assumption \ref{massacttype} and if the original network $\mathcal{N}$ also satisfies this assumption then we have the following correspondence between the reachability relations induced by the two networks 
\begin{align}
\label{reln:netinverse}
x \stackrel{  \mathcal{N}  }{\longrightarrow} y \qquad \textnormal{if and only if} \qquad  y \stackrel{ \mathcal{N}_{ \textnormal{inv} } }{\longrightarrow} x, \qquad \textnormal{for any} \qquad x,y \in \mathcal{E}_0.
\end{align}  
To see this correspondence suppose that under network $\mathcal{N}$, the dynamics can reach $y$ from $x$ in one reaction step. In such a scenario for some reaction $k$ we have $\lambda_k(x) >0$ and $y = x +\zeta_k$. Since $x_i \geq \nu_{ik}$ and $\zeta_{ik} = (\rho_{ik} - \nu_{ik})$, we have $y_i \geq \rho_{ik}$ for each $i=1,\dots,d$, which ensures that $\lambda_{k,\textnormal{inv}}(y) >0$ and so under the inverse network $\mathcal{N}_{ \textnormal{inv} }$, the dynamics can reach state $x$ from state $y$ by a single firing of reaction $k$. Extending this idea to incorporate a sequence of intermediate states and reactions (see Section \ref{subsec:reachability}) one can conclude that \eqref{reln:netinverse} holds.

\subsection{Reaction network under a permutation} \label{sec:srnundersigma}

Our description of the stochastic model for network $\mathcal{N} = ( \mathcal{V}  , \mathcal{O}  ,\Lambda)$, stipulates that if the state is $x = (x_1,\dots,x_d)$ then $x_i$ denotes the copy-number of species $i$ (viz. $\mathbf{S}_{i}$). However in order to simplify the representation for state-space we would often need to redefine the correspondence between the species and the location of their copy-numbers in the state vector. This can be conveniently done by \emph{permuting} the original network to obtain a dynamically equivalent network, as we describe below.

We denote the set of all species labels by $\mathcal{D} =\{1,\dots,d\}$. Let $\sigma : \mathcal{D} \to \mathcal{D}$ be any permutation (one-to-one and onto) map. Let $P_\sigma$ be the $d \times d$ permutation matrix given by
\begin{align}
\label{defnpermutmatrix}
P_\sigma = \textnormal{Col}\left( e_{ \sigma^{-1}(1) },\dots,  e_{ \sigma^{-1}(d) } \right),
\end{align}
where $e_1,\dots,e_d$ are the standard basis vectors in $\R^d$ and let $\sigma^{-1}$ denote the inverse of map $\sigma$. Let $\mathcal{V}^{\sigma} = P_\sigma \mathcal{V}$, $\mathcal{O}^{\sigma} = P_\sigma \mathcal{O}$ and the \emph{propensity} map $\Lambda^{\sigma} : \N^d_0 \to \R^K_+$ be as in \eqref{defn_Lambda} with each $\lambda_k$ replaced by $\lambda^{\sigma }_k$ given by
\begin{align}
\label{defn_lambdak_sigma}
 \lambda^{\sigma }_k(x) = \lambda_k(  P^{T}_{\sigma} x ) \qquad \textnormal{for each} \qquad k=1,\dots,K,
\end{align}
where $P^{T}_{\sigma}= P_{ \sigma^{-1}}$ denotes the transpose of matrix $P_\sigma $. We define the permuted network as $\mathcal{N}^{\sigma} = ( \mathcal{V}^{\sigma}   , \mathcal{O}^{\sigma}   ,\Lambda^{\sigma} )$. One can see that $\mathcal{N}^\sigma$ is dynamically equivalent to $\mathcal{N}$ in the following sense. If $(X(t))_{t \geq 0}$ represents the stochastic reaction dynamics for network $\mathcal{N} $ then $(X^\sigma(t))_{t \geq 0}$ defined by
\begin{align}
\label{reln:dynequiv}
X^{\sigma}(t) = P_{\sigma} X(t) \qquad \textnormal{ for all } t \geq 0,
\end{align} 
represents the dynamics for the permuted network $\mathcal{N}^\sigma$. In other words, if $X^{\sigma}(t) = (x_1,\dots,x_d)$ then for each $i=1,\dots,d$, $x_i$ denotes the copy-number of species $\sigma(i)$ at time $t$. Therefore the appropriate conservation data and state-space for network $\mathcal{N}^\sigma$ are $( \Gamma_\sigma,c)$ and $\mathcal{E}_0^\sigma$ respectively, where 
\begin{align}
\label{defn_e0_sigma}
\Gamma_\sigma =P_{\sigma} \Gamma \qquad  \textnormal{and}  \qquad  \mathcal{E}_0^\sigma = P_{\sigma}\mathcal{E}_0:= \{   P_{\sigma} x : x \in \mathcal{E}_0 \}.
\end{align}
One can also see that if network $\mathcal{N}$ satisfies Assumption \ref{massacttype}, then the same holds for the permuted network $\mathcal{N}^\sigma$. Due to the dynamical equivalence \eqref{reln:dynequiv} we have the following proposition.
\begin{proposition}\label{prop:equivstatespacepermutation}
A set $\mathcal{E}^{\sigma} \subset \mathcal{E}_0^\sigma $ is an irreducible state-space for network $\mathcal{N}^\sigma$ if and only if the set $\mathcal{E} = P^T_\sigma \mathcal{E}^\sigma \subset  \mathcal{E}_0  $ is an irreducible state-space for network $\mathcal{N}$.
\end{proposition}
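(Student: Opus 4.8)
The plan is to reduce the whole statement to one structural fact: the permutation matrix $P_\sigma$ transports the reachability relation of $\mathcal{N}$ onto that of $\mathcal{N}^\sigma$. So the first step is to prove that for all $x,y \in \mathcal{E}_0$,
$$ x \stackrel{\mathcal{N}}{\longrightarrow} y \quad \Longleftrightarrow \quad P_\sigma x \stackrel{\mathcal{N}^\sigma}{\longrightarrow} P_\sigma y. $$
By transitivity of $\stackrel{\mathcal{N}}{\longrightarrow}$ and induction on the length of a positive-probability reaction sequence, it suffices to treat a single reaction step. Two elementary identities do the work: the stoichiometry vector $\zeta^\sigma_k$ of reaction $k$ in $\mathcal{N}^\sigma$ satisfies $\zeta^\sigma_k = P_\sigma \zeta_k$ (since $\mathcal{O}^\sigma - \mathcal{V}^\sigma = P_\sigma(\mathcal{O} - \mathcal{V})$), and $\lambda^\sigma_k(P_\sigma x) = \lambda_k(P^T_\sigma P_\sigma x) = \lambda_k(x)$ by \eqref{defn_lambdak_sigma}. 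Hence ``$\lambda_k(x) > 0$ and $y = x + \zeta_k$'' holds iff ``$\lambda^\sigma_k(P_\sigma x) > 0$ and $P_\sigma y = P_\sigma x + \zeta^\sigma_k$'' holds; and if $z_1,\dots,z_n$ are the intermediate states of a positive-probability sequence for $\mathcal{N}$, then $P_\sigma z_1,\dots,P_\sigma z_n$ are exactly the intermediate states of the image sequence for $\mathcal{N}^\sigma$, so the requirement that all intermediate propensities be positive transfers verbatim. Applying this equivalence in both directions also yields $x \stackrel{\mathcal{N}}{\longleftrightarrow} y \iff P_\sigma x \stackrel{\mathcal{N}^\sigma}{\longleftrightarrow} P_\sigma y$.

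The second step is bookkeeping with the bijection. By \eqref{defn_e0_sigma}, $P_\sigma$ maps $\mathcal{E}_0$ bijectively onto $\mathcal{E}_0^\sigma$ with inverse $P^T_\sigma$, so $\mathcal{E} = P^T_\sigma \mathcal{E}^\sigma \subset \mathcal{E}_0$ precisely when $\mathcal{E}^\sigma = P_\sigma \mathcal{E} \subset \mathcal{E}_0^\sigma$, and the two subsets correspond under this bijection. Since $\stackrel{\mathcal{N}}{\longleftrightarrow}$ and $\stackrel{\mathcal{N}^\sigma}{\longleftrightarrow}$ are equivalence relations whose classes are carried into one another by $P_\sigma$ (first step), a subset of $\mathcal{E}_0^\sigma$ is a communication class for $\mathcal{N}^\sigma$ if and only if its $P^T_\sigma$-image is a communication class for $\mathcal{N}$. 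Closedness transfers the same way: assuming $\mathcal{E}^\sigma$ is closed for $\mathcal{N}^\sigma$, any $x \in \mathcal{E}$ and $y \in \mathcal{E}_0$ with $x \stackrel{\mathcal{N}}{\longrightarrow} y$ give $P_\sigma x \in \mathcal{E}^\sigma$, $P_\sigma y \in \mathcal{E}_0^\sigma$ and $P_\sigma x \stackrel{\mathcal{N}^\sigma}{\longrightarrow} P_\sigma y$, whence $P_\sigma y \in \mathcal{E}^\sigma$, i.e. $y \in \mathcal{E}$; the converse is identical. Combining these, $\mathcal{E}^\sigma$ is a closed communication class, i.e. an irreducible state-space, for $\mathcal{N}^\sigma$ if and only if $\mathcal{E}$ is one for $\mathcal{N}$, which is the assertion.

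I do not expect a genuine obstacle here. The only place needing real care is the first step — being precise that a positive-probability reaction sequence for one network, together with all of its intermediate states, maps under $P_\sigma$ to such a sequence for the other — but this is mechanical once $\zeta^\sigma_k = P_\sigma \zeta_k$, $\lambda^\sigma_k \circ P_\sigma = \lambda_k$, and the linearity of $P_\sigma$ are in hand. I would also note that Assumption \ref{massacttype} is not actually used in this proof: the reachability correspondence is purely structural. (It was already observed in Section \ref{sec:srnundersigma} that $\mathcal{N}^\sigma$ inherits Assumption \ref{massacttype} from $\mathcal{N}$, which is relevant when the proposition is applied, not for its proof.)
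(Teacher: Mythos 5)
Your proof is correct and is essentially the paper's own argument in expanded form: the paper simply invokes the dynamical equivalence $X^\sigma(t)=P_\sigma X(t)$ of \eqref{reln:dynequiv}, which is exactly the relabeling correspondence you verify step by step via $\zeta^\sigma_k=P_\sigma\zeta_k$ and $\lambda^\sigma_k\circ P_\sigma=\lambda_k$. Your added observation that Assumption \ref{massacttype} plays no role here is also accurate.
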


 \section{Computing the decomposed form of state-space}    \label{sec:computingdss}

The aim of this section is to provide a procedure to obtain the decomposed form \eqref{defn_dss} for state-space $\mathcal{E}_0$ for a network $\mathcal{N} = ( \mathcal{V}  , \mathcal{O}  ,\Lambda)$ with conservation data $(\Gamma,c)$. For this purpose, we may need to permute this network according to some permutation $\sigma$ and work with the equivalent network $\mathcal{N}^{\sigma}$ with state-space $\mathcal{E}_0^\sigma$ (see Section \ref{sec:srnundersigma}). 

The decomposed form can be constructed in two simple steps. Firstly by scanning the space of all semi-positive conservation relations, the \emph{bounded} species are identified and their appropriate finite state-space $\mathcal{E}_b$ is found (Section \ref{sec:findingboundedstatespace}). Secondly the rest of the species are classified as  \emph{free} or \emph{restricted} depending on any mixed-sign conservation relations between them. Moreover, the affine function $\phi$ (see \eqref{defngraphphi}), which gives the static relationship between these two sets of species, is determined (Section \ref{sec:idenfreeandrestricted}). The detailed algorithm for performing state-space decomposition is presented in Section \ref{sec:algorithms}.

\subsection{Identifying the \emph{bounded} species and their state-space}\label{sec:findingboundedstatespace}

Note that the space of all conservation relations can be expressed as $\mathcal{L} (S) =\{ \Gamma \alpha : \alpha \in \R^n \}$. Suppose that $\gamma = \Gamma \alpha$ is a semi-positive conservation relation for some $\alpha \in \R^n$. From \eqref{conservationimpl1} and \eqref{defn_consdata}, one can see that for any $i \in \textnormal{supp}(\gamma) $ we have
\begin{align}
\label{conservedrelnbddspecies}
0 \leq X_i(t) \leq  \frac{\langle c, \alpha    \rangle  }{\gamma_i}\qquad \textnormal{ for all } t \geq 0,
\end{align}
where $X_i(t)$ denotes the copy-number of species $i$ at time $t$. This shows that species $i$ is \emph{bounded}, but since it may be involved in several semi-positive conservation relations, the upper-bound $\langle c, \alpha \rangle/ \gamma_i$ for its copy-numbers may not be \emph{sharp}. To systematically account for all these relations and obtain a sharper upper-bound $b_i$ we solve the following Linear Program (LP):   
\begin{align}
\label{LPP2}
b_i = &\min_{ \alpha \in \R^n } \langle c, \alpha \rangle \\
& \textnormal{ subject to } \qquad \Gamma \alpha  \geq \mathbf{0}  \quad \textnormal{    and    } \quad \langle e_i , \Gamma \alpha \rangle = 1, \notag
\end{align}
where $ e_i$ is the $i$-th standard basis vector in $\R^d$. If the feasible region of this LP is empty, then $b_i = \infty$ and species $i$ is not involved in any semi-positive conservation relation. Henceforth we partition the set of \emph{all} species $\mathcal{D} = \{1,\dots,d\}$ into the set of \emph{bounded} species $\mathcal{D}_b = \{i \in \mathcal{D}  :  b_i < \infty \}$ and the set of \emph{unbounded} species  $\mathcal{D}_u =\{i \in \mathcal{D}  :  b_i = \infty \}$. Let $d_b =| \mathcal{D}_b |$ and $d_u =| \mathcal{D}_u | =d -d_b$ be the cardinalities of these two sets. 

Choose a permutation map $\sigma_1 : \mathcal{D} \to \mathcal{D}$ satisfying
\begin{align}
\label{defn_permsigma1}
\sigma_1(l)  \in \left\{
\begin{array}{cl}
\mathcal{D}_{b} & \textnormal{ for } l = 1,\dots,d_b  \\
\mathcal{D}_u & \textnormal{ for } l =(d_b+1),\dots,d
\end{array} \right.
\end{align}
and  consider the reaction dynamics of the network $\mathcal{N}^{\sigma}$ under permutation $\sigma = \sigma_1$ (see Section \ref{sec:srnundersigma}). Now the entries in rows $1,\dots,d_b$ of the state vectors will contain the copy-numbers of \emph{bounded} species in $\mathcal{D}_{b}$. These copy-numbers, arranged as vectors in $\N_0^{d_{b} }$, will always lie in the finite rectangular set 
\begin{align}
\label{defn_bdd_rectangle}
\mathcal{R}^{\sigma}_{b} = \{ (x_1,\dots,x_{d_{b}  }) \in \N^{d_{b} }_0 : 0 \leq x_l \leq b_{\sigma(l) } \quad \textnormal{for each } l = 1,\dots,d_{b}  \},
\end{align}
but all the elements in this set may not be reachable from each other due to conservation relations among \emph{bounded} species (see Section \ref{sec:examples}). We deal with these conservations relations now.

Let $S_{\sigma}$ be the stoichiometry matrix for network $\mathcal{N}^{\sigma}$ and let  $(\Gamma_{\sigma} , c)$ be its conservation data (see \eqref{defn_e0_sigma}). Setting $S_{\sigma}^{b} := \textnormal{Proj}(S_{\sigma} ,1,d_b )$, the conservation relations among the \emph{bounded} species are given by nonzero vectors in its left nullspace $\mathcal{L} ( S_{\sigma}^{b} )$. Suppose $n_{b} = \textnormal{dim} \left(  \mathcal{L} ( S_{\sigma}^{b}  ) \right) \geq 1$ and let $\{ \hat{\gamma}_1,\dots,\hat{\gamma}_{n_{b}} \}$ denote a \emph{basis} for $\mathcal{L} ( S_{\sigma}^{b}  )$. For each $j=1,\dots, n_{b}$, let $\bar{\gamma}_j = ( \hat{\gamma}_j ,  \mathbf{0} )  \in \R^d$ and set $\hat{c}_j =  \langle \alpha_j, c\rangle$ where $\alpha_j \in \R^n$ is the unique solution of the linear-system $\Gamma_{\sigma} \alpha_j =\bar{\gamma}_j.$ This solution $\alpha_j$ is unique because the columns of $\Gamma_{\sigma}$ are independent and they span $\mathcal{L} ( S_{\sigma} )$ which contains $\bar{\gamma}_j$. For the permuted network $\mathcal{N}^{\sigma}$, the state vectors for all the \emph{bounded} species in $\mathcal{D}_{b}$  will always lie in the finite set
\begin{align}
\label{defnEsigmab}
\mathcal{E}^{\sigma}_{b} = \{ x \in  \mathcal{R}^{\sigma}_{b} :  \langle \hat{\gamma}_j , x\rangle = \hat{c}_j  \ \textnormal{ for each } \ j=1,\dots,n_{b} \}.
\end{align}

\subsection{Identifying the \emph{free} and the \emph{restricted }species} \label{sec:idenfreeandrestricted}

We now partition the set $\mathcal{D}_u$ of \emph{unbounded} species, into a set $\mathcal{D}_f$ of \emph{free} species and a set $\mathcal{D}_r$ of \emph{restricted} species. Letting $S_{\sigma}^{u} = \textnormal{Proj}(S_{\sigma},d_b+1,d)$, we define the number of \emph{free} species $d_f$ and the number of \emph{restricted} species $d_r$ by
\begin{align}
\label{defnnumfreerestricted}
d_f :=  \textnormal{Rank}(S_{\sigma}^{u}) \qquad  \textnormal{and} \qquad d_r := d_u - d_f = d -d_b -d_f.
\end{align}     
Note that the total number of species ($d$) is equal to the sum of the number of \emph{free} ($d_f$), \emph{bounded} ($d_b$) and \emph{restricted} species ($d_r$). Observe that $d_r$ is the dimension of the left nullspace $\mathcal{L} ( S_{\sigma}^{u} )$ of matrix $S_{\sigma}^{u}$, which corresponds to the space of all conservation relations among the \emph{unbounded} species. Any nonzero vector $\gamma \in \mathcal{L} ( S_{\sigma}^{u} )$ must necessarily be a mixed-sign conservation relation, because otherwise the species in its support set \eqref{suppsetgamma} would be \emph{bounded} which is not the case. Recall that $ n  = \textnormal{dim} (  \mathcal{L} (S_{\sigma} )  ) $ is the total number of conservation relations while $n_b$ are the number of conservation relations among the \emph{bounded} species. Hence we must have $n \geq n_b + d_r$.  As it turns out, usually for biological networks (see Section \ref{sec:examples}) this inequality is strict (i.e.\ $n = n_b + d_r$) which is same as saying that the following assumption holds.
\begin{assumption}
\label{assmp:noofconsrels2}
There are no conservation relations involving both the bounded and the free/restricted species i.e.\
\begin{align*}
\textnormal{Rank}(S_{ \sigma}) = \textnormal{Rank}(S_{\sigma}^{b}) + \textnormal{Rank}(S_{\sigma}^{u}),
\end{align*}
where $S_{\sigma}^{b} = \textnormal{Proj}(S_{\sigma},1,d_b)$ and $S_{\sigma}^{u} = \textnormal{Proj}(S_{\sigma},d_b+1,d)$.
\end{assumption}
This assumption will be required to hold from now on. Suppose that \emph{restricted} species exist (i.e. $d_r \geq 1$). Hence, the space $\mathcal{L} ( S_{\sigma}^{u} )$ is nontrivial. Let $\{ \delta'_1,\dots, \delta'_{d_r} \}$ be a basis for $\mathcal{L} ( S_{\sigma}^{u} )$. For any subset $I =\{i_1,\dots,i_{d_f}\} \subset \{1,\dots,d_u\}$ with $|I| = d_f$ elements, let $A_I$ be the $d_u \times d_u$ matrix given by 
\begin{align}
\label{defn_matAi}
A_I = \textnormal{Col}\left( e_{i_1},\dots, e_{i_{d_f}}, \delta'_1,\dots, \delta'_{d_r}   \right),
\end{align}
where $e_i$-s are the standard basis vectors in $\R^{d_u}$. Define another set
\begin{align}
\label{defn_indexsetfreespecies}
\mathcal{I}_f = \{ I \subset \{1,\dots,d_u\}   : |I| = d_f \quad \textnormal{ and } \quad \textnormal{Rank}( A_I) = d_u \}.
\end{align}
Note that this set is nonempty and its cardinality is bounded above by ${  d_u \choose d_f}$.

Fix a $I \in \mathcal{I}_f $ and let $I^c$ denote its complement in the set $\{1,\dots, d_u\}$. We define the set $\mathcal{D}_f $ of \emph{free} species and the set $\mathcal{D}_r$ of \emph{restricted} species as $\mathcal{D}_f = \{ \sigma(d_b+ i) : i \in I \}$ and $\mathcal{D}_r = \{ \sigma(d_b+ i) : i \in I^c \}$. These two sets partition the set $\mathcal{D}_u$. We now choose another permutation map $\sigma_2 : \mathcal{D} \to \mathcal{D}$ satisfying
\begin{align}
\label{defn_permsigma2}
\sigma_2(l) = \sigma_1(l) \quad \textnormal{    for   } l =1,\dots,d_b \qquad  \textnormal{and} \qquad
\sigma_2(l)  \in \left\{
\begin{array}{cl}
\mathcal{D}_f & \textnormal{    for   } l =(d_b+1),\dots,(d_b+d_f) \\
\mathcal{D}_r & \textnormal{    for   } l =(d_b+d_f+1),\dots,d. 
\end{array} \right.
\end{align}

Let $\mathcal{N}^{\sigma}$ be the network under permutation $\sigma = \sigma_2$ (see Section \ref{sec:srnundersigma}), and let $S_{\sigma}$ and $(\Gamma_{\sigma} , c)$ be its stoichiometry matrix and conservation data respectively. For each $i =1,\dots, d_r$, the vector $\bar{\delta}_i = ( \mathbf{0} , \delta'_i ) \in \R^d$ belongs to $\mathcal{L}(S_{\sigma})$ and hence the vector $\hat{\delta}_i = P_{\sigma_2} P^T_{\sigma_1} \bar{\delta}_i $ belongs to $\mathcal{L}(S_{\sigma})$. Since the permutations $\sigma_1$ and $\sigma_2$ are identical on $\{1,\dots,d_b\}$, each $\hat{\delta}_i $ must have the form $\hat{\delta}_i  = ( \mathbf{0}  , \delta^{(1)}_i, \delta^{(2)}_i )$ for some vectors $\delta^{(1)}_i\in \R^{d_f}$ and $\delta^{(2)}_i \in \R^{d_r}$. Define a $d_f \times d_r$ matrix $\Delta_1 = \textnormal{Col}( \delta^{(1)}_1,\dots,\delta^{(1)}_{d_r} )$ and a $d_r \times d_r$ matrix $\Delta_2 = \textnormal{Col}( \delta^{(2)}_1,\dots,\delta^{(2)}_{d_r} )$. Observe that if $A_I$ is the matrix given by \eqref{defn_matAi}, then there exists a $d_u \times d_u$ permutation matrix $Q$ such matrix $Q A_I$ has the form
\begin{align}
\label{matrixpAi}
Q A_I = \left[ 
\begin{array}{cc}
{\bf I} &  \Delta_1 \\
\mathbf{0} & \Delta_2 
\end{array}\right],
\end{align}
where ${\bf I}$ is the $d_f \times d_f$ identity matrix and $0$ is the $d_r \times d_f$ matrix of all zeroes. Matrix $A_I$ is invertible because $I \in \mathcal{I}_f$, and hence matrix $\Delta_2$ is also invertible.

From now on let $S^{b}_{\sigma} =  \textnormal{Proj}(S_{\sigma},1,d_b)$, $S^{u}_{\sigma} =  \textnormal{Proj}(S_{\sigma},d_b+1,d)$, $S^{f}_{\sigma}=  \textnormal{Proj}(S^{u}_{\sigma},1,d_f)$ and $S^{r}_{\sigma} = \textnormal{Proj}(S^u_{\sigma},d_f+1,d_f +d_r)$. For each $i=1,\dots,d_r$, the vector $\hat{\delta}_{i} = ( \mathbf{0} , \delta^{(1)}_i, \delta^{(2)}_i  )$ belongs to  $\mathcal{L} (S_{\sigma} )$, where $\mathbf{0}$ denotes the vector of zeros in $\R^{d_b}.$ Hence we must have $\Delta^T_1 S^{f}_{\sigma} = - \Delta^T_2  S^{r}_{\sigma}$ which allows us to write
\begin{align}
\label{srdependsonsf}
S^{r}_{\sigma}  = -(\Delta^T_2)^{-1}\Delta^T_1 S^{f}_{\sigma} =  -(\Delta^{-1}_2)^{T}\Delta^T_1 S^{f}_{\sigma}.
\end{align}
This also shows that $\textnormal{Rank}( S^{f}_{ \sigma }  ) = d_f$ because $d_f =  \textnormal{Rank}(S_{\sigma}^{u})$ (see \eqref{defnnumfreerestricted}) and $S_{\sigma}^{u}$ has the form
\begin{align*}
S_{\sigma}^{u} = 
\left[ 
\begin{array}{c}
S_{\sigma}^{f}  \\
S_{\sigma}^{r} 
\end{array}
\right] = 
\left[ 
\begin{array}{c}
{\bf I} \\
-(\Delta^{-1}_2)^{T}\Delta^T_1
\end{array}
\right] S_{\sigma}^{f} . 
\end{align*}

The state-space for network $ \mathcal{N}^{\sigma}$ is $\mathcal{E}^{\sigma}_0 = P_{ \sigma } \mathcal{E}_0 = \{ x \in \N^d_0 : \Gamma_{ \sigma } x = c \}$ (see \eqref{defn_e0_sigma}). Any element in this set can be expressed as $x = ( x_b, x_f,x_r )$ where $x_b \in \mathcal{E}^{\sigma}_b$, $x_f \in \N^{d_f}_0$ and $x_r \in \N^{d_r}_0$. Since the columns of $\Gamma_{\sigma} $ span $\mathcal{L}(S_{\sigma})$, for each $i=1,\dots,d_r$, there exists a unique $n \times d_r$ matrix $M$ such that
\begin{align*}
\Gamma_{\sigma}  M = \textnormal{Col}( \hat{\delta}_1,\dots, \hat{\delta}_{d_r}).
\end{align*}
Since Assumption \ref{assmp:noofconsrels2} holds, the last $(d_f+d_r)$ rows of the condition $\Gamma_{ \sigma } x = c$ yield $ \Delta^T_1 x_f +\Delta^T_2 x_r  =\hat{c}$ for $\hat{c} = M^T c$. This shows that $x_r = \phi(x_f)$, where the \emph{affine} map $\phi : \R^{d_f} \to \R^{d_r}$ is defined by
\begin{align}
\label{defn_map_phi}
\phi(x) = (\Delta^T_2)^{-1} \hat{c}  -(\Delta^T_2)^{-1}\Delta^T_1 x  .
\end{align}
This analysis proves the following proposition.
\begin{proposition}
\label{prop:ssdecomp}
Suppose Assumption \ref{assmp:noofconsrels2} holds. Then state-space $\mathcal{E}^{\sigma}_0$ for network $\mathcal{N}^{\sigma}$ can be expressed as
\begin{align}
\label{defn_ebsigma2b}
\hat{\mathcal{E}}^{\sigma}_0 = \mathcal{E}^{\sigma}_b \times \Phi,
\end{align}
where $\Phi$ is the graph \eqref{defngraphphi} of function $\phi$ defined by \eqref{defn_map_phi}. 
\end{proposition}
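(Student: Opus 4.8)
The plan is to prove \eqref{defn_ebsigma2b} as a set identity, by showing that the linear system $\Gamma_\sigma^T x=c$ cutting $\mathcal{E}^\sigma_0$ out of $\N^d_0$ (see \eqref{defn_e0_sigma}) is equivalent to a pair of subsystems that live on disjoint blocks of coordinates: one involving only the first $d_b$ entries of $x$ and reproducing the finite set $\mathcal{E}^\sigma_b$, and one involving only the last $d_f+d_r$ entries and reproducing the graph $\Phi$. Throughout I write a generic lattice point as $x=(x_b,x_f,x_r)$ with $x_b\in\N^{d_b}_0$, $x_f\in\N^{d_f}_0$ and $x_r\in\N^{d_r}_0$.

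The first and main step is the linear-algebraic lemma that $\{\bar{\gamma}_1,\dots,\bar{\gamma}_{n_b},\hat{\delta}_1,\dots,\hat{\delta}_{d_r}\}$ is a basis of $\mathcal{L}(S_\sigma)$. Membership is immediate from the definitions: since $S_\sigma$ stacks $S_\sigma^b$ over $S_\sigma^u$, the identity $\hat{\gamma}_j^T S_\sigma^b=\mathbf{0}^T$ gives $\bar{\gamma}_j^T S_\sigma=\mathbf{0}^T$, and the $(d_f+d_r)$-tail of $\hat{\delta}_i$ being $\delta'_i\in\mathcal{L}(S_\sigma^u)$ gives $\hat{\delta}_i^T S_\sigma=\mathbf{0}^T$. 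Linear independence follows by inspecting coordinates: a vanishing combination, read on the first $d_b$ coordinates, forces a vanishing combination of the $\hat{\gamma}_j$, and then, read on the last $d_u$ coordinates, a vanishing combination of the $\delta'_i$; since the $\hat{\gamma}_j$ are a basis of $\mathcal{L}(S_\sigma^b)$ and the $\delta'_i$ a basis of $\mathcal{L}(S_\sigma^u)$, all coefficients vanish. This exhibits $n_b+d_r$ independent vectors of $\mathcal{L}(S_\sigma)$, and condition \eqref{noofconsrels2} is exactly the statement $\dim\mathcal{L}(S_\sigma)=n=n_b+d_r$, so the list spans and hence is a basis.

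With this basis in hand, and using that $\mathcal{E}^\sigma_0$ is nonempty (it contains $P_\sigma x_0$), the equation $\Gamma_\sigma^T x=c$ is equivalent to the finite system $\langle\bar{\gamma}_j,x\rangle=\hat{c}_j$ for $j=1,\dots,n_b$ together with $\langle\hat{\delta}_i,x\rangle=(\hat{c})_i$ for $i=1,\dots,d_r$: evaluating on any point of $\mathcal{E}^\sigma_0$ gives $\langle\bar{\gamma}_j,x\rangle=\langle\Gamma_\sigma\alpha_j,x\rangle=\langle\alpha_j,c\rangle=\hat{c}_j$ and $\langle\hat{\delta}_i,x\rangle=\langle\Gamma_\sigma Me_i,x\rangle=(M^Tc)_i$, while conversely every column of $\Gamma_\sigma$ lies in the span of the above basis and the constants match because they are pinned down on the common feasible point $P_\sigma x_0$. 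Since $\bar{\gamma}_j$ touches only the first $d_b$ coordinates, the first subsystem reads $\langle\hat{\gamma}_j,x_b\rangle=\hat{c}_j$; together with $x_b\in\N^{d_b}_0$ and the upper bounds $x_{\sigma(l)}\le b_{\sigma(l)}$, which hold for every state of $\mathcal{E}^\sigma_0$ by the definition of $b_i$ through the LP \eqref{LPP2} (see \eqref{conservedrelnbddspecies}), this is precisely $x_b\in\mathcal{E}^\sigma_b$ as in \eqref{defnEsigmab}. Since $\hat{\delta}_i=(\mathbf{0},\delta^{(1)}_i,\delta^{(2)}_i)$, the second subsystem reads $\Delta_1^T x_f+\Delta_2^T x_r=\hat{c}$ with $\hat{c}=M^Tc$, which by invertibility of $\Delta_2$ is equivalent to $x_r=\phi(x_f)$ with $\phi$ as in \eqref{defn_map_phi}; together with $x_f\in\N^{d_f}_0$ and $x_r\in\N^{d_r}_0$ this is precisely $(x_f,x_r)\in\Phi$ as in \eqref{defngraphphi}. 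Assembling the two decoupled blocks yields $\mathcal{E}^\sigma_0=\mathcal{E}^\sigma_b\times\Phi$.

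The step I expect to be the real content, and the one I would treat most carefully, is the basis lemma together with its use of \eqref{noofconsrels2}: one must be certain that \eqref{noofconsrels2} genuinely forbids any conservation relation mixing a bounded coordinate with a free or restricted one, i.e. that the block-structured list $\{\bar{\gamma}_j\}\cup\{\hat{\delta}_i\}$ spans all of $\mathcal{L}(S_\sigma)$ and not merely a proper subspace — it is precisely this spanning property that makes the two coordinate blocks decouple, and without \eqref{noofconsrels2} the decomposition \eqref{defn_ebsigma2b} generally fails. The remaining work — tracking the permutations $\sigma_1,\sigma_2$, transposing $\Gamma_\sigma$, and matching the affine constants $\hat{c}_j$ and $\hat{c}$ — is routine bookkeeping.
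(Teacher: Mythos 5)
Your proof is correct and follows essentially the same route as the paper: the analysis preceding the proposition likewise decouples the condition $\Gamma_\sigma^T x = c$ under \eqref{noofconsrels2} into the bounded-species constraints defining \eqref{defnEsigmab} and the relation $\Delta_1^T x_f + \Delta_2^T x_r = \hat{c}$, i.e. $x_r=\phi(x_f)$ with $\phi$ as in \eqref{defn_map_phi}. Your explicit basis lemma for $\mathcal{L}(S_\sigma)$ and the check of both inclusions merely make precise what the paper leaves implicit in its remark that $n=n_b+d_r$ excludes conservation relations mixing bounded with free/restricted species.
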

Note that this state-space decomposition result only depends on the reaction stoichiometries but not on their propensities. Indeed the propensity functions can be completely general as long as they satisfy the basic assumption \eqref{statespacecond} which ensures that the dynamics is contained in the positive orthant. We end this section with an important remark.
\begin{remark}
\label{rem:flixibilityofI}
Note that the classification of unbounded species into free and restricted \\ species depends on the set $I$ which can chosen to be any element in the set $\mathcal{I}_f$ given by \eqref{defn_indexsetfreespecies}. This flexibility will be useful in the next section. 
\end{remark}

\section{Identifying the irreducible state-spaces}  \label{sec:irredclasses}

In this section we shall assume that network $\mathcal{N} = ( \mathcal{V}  , \mathcal{O}  ,\Lambda)$ satisfies Assumption \ref{massacttype}. As a consequence, the permuted network $\mathcal{N}^{ \sigma } = ( \mathcal{V}^{\sigma}   , \mathcal{O}^{\sigma}   ,\Lambda^{\sigma} ) $ also satisfies this assumption, which is a property that will play a crucial role in our search for irreducible state-spaces within the infinite state-space $\mathcal{E}^{\sigma}_0$. Recall that these state-spaces are the closed communication classes for relation $ \stackrel{ \mathcal{N}^{ \sigma }  }{\longleftrightarrow}$ on $\mathcal{E}^{\sigma}_0$ (see Section \ref{subsec:reachability}). From the discussion in Section \ref{sec:idenfreeandrestricted} it is immediate that \emph{restricted} species have no independent dynamics of their own and they essentially \emph{mimic} the \emph{free} species according to the affine map $\phi$. This suggests that for finding irreducible state-spaces we can simply remove the \emph{restricted} species and concentrate on the dynamics of the \emph{bounded} and the \emph{free} species. We now describe this step formally.

\subsection{Network reduction by elimination of \emph{restricted} species} \label{sec:netreductionbyelimination}

We construct a ``reduced" network $\tilde{\mathcal{N} }^{\sigma}$ with $(d_b+d_f)$ species in the set $\mathcal{D}_b \cup \mathcal{D}_f $ where $\mathcal{D}_b = \{  \sigma(1),\dots, \sigma(d_b) \} $ and $ \mathcal{D}_f =\{ \sigma(d_b+1),\dots, \sigma(d_b+d_f) \}$. For each reaction $k=1,\dots,K$, let its propensity function $\tilde{ \lambda }^{\sigma}_k : \N^{d_b}_0 \times \N^{d_f}_0 \to \R_+$ be given by
 \begin{align}
 \label{defnreducedpropensity}
\tilde{ \lambda }^{\sigma}_k(x_b,x_f ) = \lambda_k^{\sigma }( x_b ,x_f, \phi(x_f) )
\end{align}
where $x_b \in \N^{d_b}_0, x_f \in \N^{d_f}_0$ and the function $\lambda_k^{\sigma }$ is defined by \eqref{defn_lambdak_sigma}. Let the \emph{propensity} map $\tilde{ \Lambda }^{\sigma} : \N^{ d_b+d_f}_0 \to \R^K_+$ be as in \eqref{defn_Lambda} with each $\lambda_k$ replaced by $\tilde{ \lambda }^{\sigma}_k$. Setting $\tilde{ \mathcal{V} }^{\sigma} = \textnormal{Proj}(\mathcal{V}^{ \sigma },1,d_b+d_f) $ and $\tilde{ \mathcal{O} }^{\sigma} = \textnormal{Proj}(\mathcal{O}^{ \sigma } ,1,d_b+d_f)$, we define the reduced network as $\tilde{\mathcal{N} }^{\sigma}= ( \tilde{ \mathcal{V} }^{\sigma},\tilde{ \mathcal{O} }^{\sigma},  \tilde{ \Lambda }^{\sigma}  )$.

Unfortunately the reduced network $\tilde{\mathcal{N} }^{\sigma}$ may not satisfy Assumption \ref{massacttype} even though the original network does (see Example \ref{ex:toxin}). We need to avoid this problem because our approach requires this property. Fortunately for most biological examples this can be done by exploiting the flexibility in the choice of set $I$ (see Remark \ref{rem:flixibilityofI}), which classifies each \emph{unbounded} species as \emph{free} or \emph{restricted}. Note that different choices of $I$ will yield different reduced networks but they correspond to the same dynamics for the original network. Hence the irreducible state-spaces  for the original network can be found with any $I$ chosen as per our convenience. We sequentially examine each element in the finite set $\mathcal{I}_f$ (see \eqref{defn_indexsetfreespecies}) until we either find a $I$ for which the affine function $\phi$ satisfies the following assumption or we exhaust all the possibilities (see Algorithm \ref{finddecompss}).  
\begin{assumption}
\label{affinefunction}
Consider an affine map $f : \R^{d_f} \to \R^{d_r}$
\begin{align*}
f(x) = F_0 + F_1 x,
\end{align*}
where $F_0$ is a vector in $\R^{d_r}$ and $F_1$ is a $d_r \times d_f$ matrix. We say that this map is compatible with network $\mathcal{N}^{ \sigma } = ( \mathcal{V}^{\sigma}   , \mathcal{O}^{\sigma}   ,\Lambda^{\sigma} ) $ if $F_0$ and $F_1$ have all the entries in $\N_0$ and the matrix inequality
\begin{align}
\label{affmap:requirements}
 F_0  \mathbf{1}^T + F_1 \mathcal{V}^{\sigma}_f \geq  \mathcal{V}^{\sigma}_r
\end{align}
holds, where $\mathcal{V}^{\sigma}_f = \textnormal{Proj}(  \mathcal{V}^{\sigma}, d_b+1,d_b+d_f)$, $\mathcal{V}^{\sigma}_r = \textnormal{Proj}(  \mathcal{V}^{\sigma}, d_b+d_f +1,d)$ and $\mathbf{1}$ is the vector of all ones in $\R^K$.
\end{assumption}

If the affine map $\phi$ satisfies this compatibility condition, then the network reduction will automatically satisfy Assumption \ref{massacttype}. To see this note that \eqref{affmap:requirements} implies that for any reaction $k$, if $x_f \geq \nu_{k,f}$ then $\phi( x_f ) \geq \nu_{r,f}$, where $\nu_{k,f}$ ($\nu_{r,f}$) denotes the $k$-th column of matrix $\mathcal{V}^{\sigma}_f$ ($\mathcal{V}^{\sigma}_r$). Therefore the reduced network $\tilde{\mathcal{N} }^{\sigma}$ satisfies Assumption \ref{massacttype} because
\begin{align*}
\tilde{ \lambda }^{\sigma}_k(x_b,x_f ) > 0 \quad &\Longleftrightarrow \quad  \lambda^{\sigma}_k(x_b,x_f, \phi(x_f) ) > 0 \\
&\Longleftrightarrow \quad  (x_b , x_f, \phi(x_f)) \geq ( \nu_{k,b} , \nu_{k,f} , \nu_{k,r} ) \\
&\Longleftrightarrow \quad  (x_b , x_f) \geq ( \nu_{k,b} , \nu_{k,f}  ) \quad \textnormal{and} \quad \phi( x_f ) \geq \nu_{r,f}  \\
&\Longleftrightarrow \quad  (x_b , x_f) \geq ( \nu_{k,b} , \nu_{k,f}  ),
\end{align*}
where $\nu_{k,b}$ is the $k$-th column of matrix $\textnormal{Proj}(  \mathcal{V}^{\sigma},1,d_b)$, $\Longleftrightarrow $ denotes ``if and only if" and the last relation holds because $\phi( x_f ) \geq \nu_{r,f}$ follows from \eqref{affmap:requirements}.

It is easy to check that the reduced network $\tilde{\mathcal{N} }^{\sigma}$ preserves all the semi-positive conservation relations among the \emph{bounded} species. Therefore if $\phi$ satisfies Assumption \ref{affinefunction} the state-space for this reduced network is
\begin{align*}
\tilde{\mathcal{E} }^{\sigma}_0 = \mathcal{E}_b^{\sigma} \times \N^{d_f}_0.
\end{align*}
In what follows, we shall identify all the irreducible state-spaces for the communication relation $ \stackrel{\tilde{\mathcal{N} }^{\sigma} }{\longleftrightarrow}$ induced by the network $\tilde{\mathcal{N} }^{\sigma}$ on $\tilde{\mathcal{E} }^{\sigma}_0$. The next proposition then allows us to recover all the irreducible state-spaces for the original network $\mathcal{N}^{\sigma}$. 
\begin{proposition}
\label{mainprop_reducednetwork}
Assume that the affine map $\phi$, given by \eqref{defn_map_phi}. Then for any $F \subset \mathcal{E}^{\sigma}_b$ and $G \subset \N^{d_f}_0$, the set $F  \times G$ is an irreducible state-space for relation $ \stackrel{\tilde{\mathcal{N} }^{\sigma} }{\longleftrightarrow}$ if and only if the set $F \times \Phi_{G}$  is an irreducible state-space for relation $ \stackrel{\mathcal{N} ^{\sigma} }{\longleftrightarrow}$. Here $\Phi_G$ is the graph of function $\phi$ (see \eqref{defngraphphi}), restricted to the domain $G$.
\end{proposition}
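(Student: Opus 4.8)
The plan is to show that the map $(x_b, x_f) \mapsto (x_b, x_f, \phi(x_f))$ is a bijection between $\tilde{\mathcal{E}}^\sigma_0 = \mathcal{E}^\sigma_b \times \N^{d_f}_0$ and $\mathcal{E}^\sigma_0 = \mathcal{E}^\sigma_b \times \Phi$ that intertwines the reachability relations of the two networks, and then deduce the statement about irreducible state-spaces as a formal consequence. The key technical point is that, since $\phi$ satisfies Assumption~\ref{affinefunction}, the computation displayed just before the proposition shows $\tilde{\lambda}^\sigma_k(x_b, x_f) > 0$ if and only if $\lambda^\sigma_k(x_b, x_f, \phi(x_f)) > 0$, i.e.\ a reaction is active in $\tilde{\mathcal{N}}^\sigma$ at $(x_b, x_f)$ exactly when it is active in $\mathcal{N}^\sigma$ at the image point $(x_b, x_f, \phi(x_f))$.

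First I would establish the reachability correspondence: for any $(x_b, x_f), (y_b, y_f) \in \tilde{\mathcal{E}}^\sigma_0$,
\begin{align*}
(x_b, x_f) \stackrel{\tilde{\mathcal{N}}^\sigma}{\longrightarrow} (y_b, y_f) \qquad \textnormal{if and only if} \qquad (x_b, x_f, \phi(x_f)) \stackrel{\mathcal{N}^\sigma}{\longrightarrow} (y_b, y_f, \phi(y_f)).
\end{align*}
This is proved by induction on the length of a positive-probability reaction sequence (recall the description of reachability in Section~\ref{subsec:reachability}). For a single reaction $k$: firing $k$ in $\tilde{\mathcal{N}}^\sigma$ moves $(x_b, x_f)$ to $(x_b, x_f) + (\tilde\zeta_{k,b}, \tilde\zeta_{k,f})$ where $\tilde\zeta$ is the stoichiometry of the reduced network, namely the first $d_b + d_f$ rows of $\zeta_k^\sigma$; meanwhile firing $k$ in $\mathcal{N}^\sigma$ moves $(x_b, x_f, \phi(x_f))$ along $\zeta_k^\sigma = (\zeta_{k,b}^\sigma, \zeta_{k,f}^\sigma, \zeta_{k,r}^\sigma)$, and since $\Phi$ is a valid state-space (Proposition~\ref{prop:ssdecomp}) the resulting point lies in $\Phi$, hence equals $(x_b + \zeta_{k,b}^\sigma, x_f + \zeta_{k,f}^\sigma, \phi(x_f + \zeta_{k,f}^\sigma))$ — the affine relation is automatically respected because the stoichiometry vector lies in $\mathcal{L}(S_\sigma)^\perp$. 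By the propensity equivalence above, the single step has positive probability on one side iff it does on the other, and the intermediate states match up under the map; chaining these gives the general statement, and the same argument applied to $\tilde{\mathcal{N}}^\sigma_{\textnormal{inv}}$ versus $\mathcal{N}^\sigma_{\textnormal{inv}}$ (using \eqref{reln:netinverse}) handles the reverse direction, so $ \stackrel{\tilde{\mathcal{N}}^\sigma}{\longleftrightarrow}$ on $\tilde{\mathcal{E}}^\sigma_0$ is carried to $ \stackrel{\mathcal{N}^\sigma}{\longleftrightarrow}$ on $\mathcal{E}^\sigma_0$ under the bijection.

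With this intertwining in place, the proposition follows formally. If $F \times G$ is a communication class for $ \stackrel{\tilde{\mathcal{N}}^\sigma}{\longleftrightarrow}$, its image under the bijection is exactly $F \times \Phi_G$ (here using that the image of $\{(x_b, x_f) : x_f \in G\}$ is the graph of $\phi$ over $G$), and it is again a communication class because the bijection preserves the equivalence relation; closedness transfers because the map also preserves the one-sided relation $\stackrel{}{\longrightarrow}$ and is a bijection of the two ambient state-spaces, so no state outside $F \times \Phi_G$ is reachable from inside it iff the analogous statement holds downstairs. The converse is identical, reading the bijection backwards. The main obstacle I anticipate is purely bookkeeping: one must verify carefully that the stoichiometry of the reduced network in the $x_f$-coordinates is genuinely the restriction of $S^f_\sigma$ and that $\phi(x_f + \zeta^\sigma_{k,f}) = \phi(x_f) + \zeta^\sigma_{k,r}$ — i.e.\ that moving along a reaction vector keeps the dynamics on the graph $\Phi$ — which is where \eqref{srdependsonsf} and the fact that $\hat\delta_i \in \mathcal{L}(S_\sigma)$ get used; everything else is a routine transfer of Markov-chain structure across a bijection.
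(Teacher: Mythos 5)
Your proposal is correct and is essentially a fleshed-out version of the paper's own (one-line) argument: the paper simply notes that the result follows from the construction of $\tilde{\mathcal{N}}^{\sigma}$ and the fact that the \emph{restricted} species are ``tied" to the \emph{free} species via $\phi$, which is exactly the bijection-and-intertwining argument you spell out, using \eqref{defnreducedpropensity} for the propensity equivalence and \eqref{srdependsonsf} for $\phi(x_f+\zeta^{\sigma}_{k,f})=\phi(x_f)+\zeta^{\sigma}_{k,r}$. The only superfluous step is the appeal to the inverse networks and \eqref{reln:netinverse}: since your single-step correspondence is already an ``if and only if", both directions of reachability (and hence communication and closedness) transfer directly without it.
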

\begin{proof}
The proof follows simply from the construction of the reduced network $\tilde{\mathcal{N} }^{\sigma}$ and the fact that the dynamics of the \emph{restricted} species is ``tied" to the dynamics of the \emph{free} species according to map $\phi$.
\end{proof}

\subsection{Networks with only \emph{bounded} species} \label{sec:netbounded}

We first consider the case when there are no \emph{free} species (i.e. $d_f=0$), and hence the state-space $\tilde{\mathcal{E} }^{\sigma}_0 = \mathcal{E}^{\sigma}_b$ for network $\tilde{\mathcal{N} }^{\sigma}$ is finite\footnote{Observe that as we have no \emph{free/restricted} species, we do not need to permute the network (i.e. $\sigma$ can be chosen as the identity permutation) or reduce it to eliminate the \emph{restricted} species (i.e. $\tilde{\mathcal{N} }^{\sigma} = \mathcal{N}$). However we employ this complicated notation to be consistent with the general case where both \emph{bounded} and \emph{free/restricted} species are present.}. In such a situation all the irreducible state-spaces can be found using simple matrix manipulations. We briefly describe this approach in the context of reaction networks and introduce the relevant concepts that will be useful later in the paper. 

Let $\mathcal{E}^{\sigma}_b = \{ y_1,\dots, y_{ N_b } \}$ be the finite state-space with $N_b = |  \mathcal{E}^{\sigma}_b |$ states. For each reaction $k$, let $\bar{\nu}^{\sigma}_k \in \N^{d_b}_0 $ and $\bar{\zeta}^{\sigma}_k \in \Z^{d_b} $ be the vectors denoting the $k$-th column of the reactant and the stoichiometry matrices of the network. As network $\tilde{\mathcal{N} }^{\sigma}$ satisfies Assumption \ref{massacttype}, only the following reactions have a positive propensity of firing
\begin{align}
\label{defn_availablerxns}
\mathcal{K}_r(y) = \{ k=1,\dots,K:  y \geq  \bar{\nu}^{\sigma}_k\}.
\end{align}
Define a $N_b \times N_b$ matrix $Z$ by
\begin{align}
\label{zeropatternmat1}
Z_{ij} = \left\{
\begin{tabular}{cl}
$1$ & if $y_{j} = y_{i} +\bar{\zeta}^{\sigma}_k$ \ for some $k \in \mathcal{K}_r(y_{i})$ \\  
$0$ & otherwise.
\end{tabular} \right.
\end{align}
We can view $Z$ as the \emph{zero-pattern} matrix\footnote{The \emph{zero-pattern} matrix corresponding to a finite Markov chain is obtained by setting all the positive entries in its probability transition matrix to $1$ and all the rest to $0$} of a finite Markov chain \cite{Kemeny} and use it to study reachability and communication relations corresponding to network $\tilde{\mathcal{N} }^{\sigma}$ (see Section \ref{subsec:reachability}). Define the $N_b \times N_b$ \emph{reachability} matrix $\Omega$ by  
\begin{align}
\Omega  = \left( {\bf I} + Z  \right)^{N_{b}-1},
\end{align}   
where ${\bf I}$ is the $N_b \times N_b$ identity matrix. For any two states $y_i, y_j \in \mathcal{E}^{\sigma}_b$, state $y_{i}$ is reachable from state $y_{j}$ if and only if the $ij$-th entry of this reachability matrix is positive (i.e. $\Omega_{ij} > 0$). Therefore based on this matrix we can define the communication relation $\Theta$ on $\mathcal{E}^{\sigma}_b$ by
\begin{align}
\label{defnthetaxf}
\Theta  = \{ (y_{i} ,y_{j} ) \in  \mathcal{E}^{\sigma}_{b} \times \mathcal{E}^{\sigma}_{b} :  \Omega_{ij} > 0 \quad \textnormal{and} \quad \Omega_{ji}  >0 \}.
\end{align}
As we mentioned in Section \ref{subsec:reachability}, $\Theta$ is an equivalence relation on $\mathcal{E}^{\sigma}_b$ which partitions this set into distinct equivalence or communication classes. Let $\bar{\Theta} = [\bar{\Theta}_{ij}]$ be the $N_b \times N_b$ matrix representing this relation, i.e. $\bar{\Theta}_{ij} = 1$ if $(y_{i} ,y_{j} ) \in \Theta$ and $\bar{\Theta}_{ij} = 0$ otherwise. One can verify that states $y_i$ and $y_j$ will belong to the same communication class if and only if rows $i$ and $j$ are identical in matrix $\bar{\Theta}$. Let $n_c$ be the number of unique rows of matrix $\bar{\Theta}$ and let $U$ be the $n_c \times N_b$ matrix formed by these rows. Each row $i$ of $U$ corresponds to a distinct communication class made up of those states $y_j$ for which $U_{ij} =1$. To study the interaction among communication classes we define another $n_c \times n_c$ matrix $R$ by
\begin{align}
\label{reachabilitycommclassrelan}
R = U Z U^T - {\bf I}.
\end{align}
This matrix captures the reachability relations among the communication classes (see Chapter 8 in \cite{kemeny1957introduction}). In other words, if $R_{ij} >0$, then there exists a reaction that takes a state in the $i$-th communication class to a state in the $j$-th communication class. If the $i$-th row of matrix $R$ only consists of zeros then the $i$-th communication class in \emph{closed} and otherwise it is \emph{open}. Starting from any initial state in $\mathcal{E}^{\sigma}_b$ the dynamics of the \emph{bounded} species will eventually get trapped in one of the closed communication classes and these are the {\bf only} irreducible state-spaces for network $\tilde{\mathcal{N} }^{\sigma}$ without any \emph{free} species.

\subsection{Networks with only \emph{unbounded} species} \label{sec:freespecies}

In this section we assume that there are no \emph{bounded} species (i.e. $d_b= 0 $) and all the irreducible state-spaces must lie inside the nonnegative integer orthant $\tilde{\mathcal{E} }^{\sigma}_0 = \N^{d_f}_0$, after the \emph{restricted} species have been removed (see Section \ref{sec:netreductionbyelimination}). We will find the irreducible state-spaces by adopting a simple scheme that attempts to arrange the \emph{free} species into \emph{birth and death cascades}. We begin by formalizing the notion of birth-cascades for the network $\tilde{\mathcal{N} }^{\sigma} =  ( \tilde{ \mathcal{V} }^{\sigma},\tilde{ \mathcal{O} }^{\sigma},  \tilde{ \Lambda }^{\sigma} )$.

For each reaction $k$, let $\tilde{\nu}^{\sigma}_k$ and $\tilde{\rho}^{\sigma}_k$ be the vectors in $\N^{d_f}_0$, denoting the $k$-th column of matrices $ \tilde{ \mathcal{V} }^{\sigma}$ and $ \tilde{ \mathcal{O} }^{\sigma}$ respectively. As $d_b =0$ the set of all \emph{free} species is $ \mathcal{D}_f =\{ \sigma(1),\dots, \sigma(d_f) \}$. From now on let $\mathcal{F} = \{1,\dots,d_f\}$ denote the set of addresses of all the \emph{free} species under the map $\sigma( \cdot)$. For any $A \subset \mathcal{F}$, let $\mathcal{D}^{ \sigma }_f (A)$ be the subset of \emph{free} species given by
\begin{align}
\label{defndfsigmaa}
\mathcal{D}^{ \sigma }_f (A):= \{ \sigma(i)  \in \mathcal{D}_f : i \in A \}
\end{align}
and let $\mathbb{B}(A) \subset \mathcal{F}$ be defined by
\begin{align*}
\mathbb{B}(A) = \left\{ i \in \mathcal{F}: i \notin A , \ \  \textnormal{supp}( \tilde{\nu}^{\sigma}_k) \subset A  \ \   \textnormal{and} \ \  i \in \textnormal{supp}(\tilde{\rho}^{\sigma}_k)  \ \  \textnormal{for some reaction} \ \  k =1 ,\dots,K \right \}.
\end{align*}
The set $\mathcal{D}^{ \sigma }_f (\mathbb{B}(A) )$ represents the \emph{free} species that do not belong to the set $\mathcal{D}^{ \sigma }_f (A)$ and are produced by a reaction that only consumes the \emph{free} species in $\mathcal{D}^{ \sigma }_f (A)$.  

Using this mapping $\mathbb{B}$, we define a sequence of subsets of $\mathcal{F}$, indexed by nonnegative integer \emph{levels} $l=0,1,\dots$, as follows: let $G_0 = \emptyset$ and for each $l \geq 1$ let
\begin{align*}
G_l = G_{l-1} \cup \mathbb{B}( G_{l-1} ).
\end{align*}
This sequence of sets $\{ G_l : l =0,1,2,\dots \}$ represent the \emph{birth-cascades} for network $\tilde{\mathcal{N} }^{\sigma}$. At any level $l$, the set $\mathcal{D}^{ \sigma }_f (G_l)$ consists of all those \emph{free} species that either belong to the previous cascade $\mathcal{D}^{ \sigma }_f (G_{l-1})$ or it is produced by a reaction that only consumes the \emph{free} species in this previous cascade $\mathcal{D}^{ \sigma }_f (G_{l-1})$. In particular for level $l=1$ the set $\mathcal{D}^{ \sigma }_f (G_1)$ consists of all those \emph{free} species that can be produced \emph{from nothing}, due to reactions of the form $\emptyset \longrightarrow {\bf S}$. From this cascade construction one can expect that all the \emph{free} species in each of these birth-cascades, can have arbitrarily high copy-numbers due to the reaction dynamics. This is precisely what we show later in the paper (see Lemma \ref{lem:level3}).

As the number of \emph{free} species is finite, the following number is well-defined
\begin{align*}
l_b = \max\{ l \in \N  :  G_l \neq G_{l-1} \} =  \min\{ l \in \N  :  \mathbb{B}( G_{l-1} )  = \emptyset \} .
\end{align*}
The monotonically increasing sequence of sets $G_0,G_1,\dots$ stops growing beyond level $l_b$ and so we have $G_l =  G_{l_b}$ for all $l \geq l_b$. We define the \emph{Birth-Cascade Path} (BCP) as the directed graph
\begin{align}
\label{defn_bcp}
\emptyset \Rightarrow G_1 \Rightarrow G_2 \Rightarrow \dots \Rightarrow G_{l_b}
\end{align}
and refer to $G_{l_b}$ as its terminal node.  The set of all the \emph{free} species that can be arranged into birth-cascades is $$\mathcal{B} = \mathcal{D}_f^{\sigma}( G_{l_b} ).$$ We define two reaction-sets as
\begin{align}
\label{defn_availablereactionsfree}
\mathcal{K}_r = \{ k=1,\dots,K:  \textnormal{supp}( \tilde{\nu}^{\sigma}_k)   \subset G_{l_b}  \} \quad \textnormal{and} \quad \mathcal{K}_p = \{ k=1,\dots,K:  \textnormal{supp}( \tilde{\rho}^{\sigma}_k)   \subset G_{l_b}  \}.
\end{align}
The set $\mathcal{K}_r$ ($\mathcal{K}_p $) consists of all those reactions that do {\bf not} consume (produce) the \emph{free} species that lie outside the set $\mathcal{B}$. Since $G_{l_b}$ is the terminal node of the BCP we must have that
\begin{align}
\label{reactioninclusionbirthsaturation}
\mathcal{K}_r   \subset  \mathcal{K}_p, 
\end{align}
which is to say that any reaction that only consumes the \emph{free} species in $\mathcal{B}$ must not produce any \emph{free} species outside this set.

Analogous to birth-cascades we now construct the death-cascades for network $\tilde{\mathcal{N} }^{\sigma}$, but we restrict our attention to only the species in $\mathcal{B}$ and only the reactions in $\mathcal{K}_r$. Let $\tilde{\mathcal{N} }^{\sigma}( \mathcal{B} )$ be the network restricted to only these species and reactions and let $\tilde{\mathcal{N} }^{\sigma}_{ \textnormal{inv} } ( \mathcal{B} ) $ be its inverse (see Section \ref{sec:defninverse}). We construct the birth-cascades for this inverse network $\tilde{\mathcal{N} }^{\sigma}_{ \textnormal{inv} } ( \mathcal{B} ) $ as before, and define the $l$-th death-cascade $\hat{G}_l$ for network $\tilde{\mathcal{N} }^{\sigma}$ as the $l$-th birth-cascade for network $\tilde{\mathcal{N} }^{\sigma}_{ \textnormal{inv} }( \mathcal{B} )$. Correspondingly the \emph{Death-cascade Path} (DCP) for network $\tilde{\mathcal{N} }^{\sigma}$ is the BCP for network $\tilde{\mathcal{N} }^{\sigma}_{ \textnormal{inv} }( \mathcal{B} )$ and we can represent it as
\begin{align}
\label{defn_dcp}
\emptyset \Rightarrow \hat{G}_1 \Rightarrow \hat{G}_2 \Rightarrow \dots \Rightarrow \hat{G}_{l_d},
\end{align}
where $\hat{G}_{l_d}$ is the terminal node of DCP and the set $$\mathcal{X} = \mathcal{D}^{ \sigma }_f( \hat{G}_{l_d} ) $$
consists of all the \emph{free} species that can be arranged into death-cascades. Note that since the set of species is restricted to $\mathcal{B}$ we must have $\mathcal{X} \subset \mathcal{B}$. The dynamical relation \eqref{reln:netinverse} satisfied by a network and its inverse, provides us with the following interpretation of death-cascades: the \emph{free} species in $\mathcal{D}^{ \sigma }_f (\hat{G}_1)$ for level $l=1$ degrade spontaneously due to reactions in $\mathcal{K}_r$ of the form $ {\bf S} \longrightarrow \emptyset $, while for higher levels $l>1$ each \emph{free} species ${\bf S}_i$ in $\mathcal{D}^{ \sigma }_f (\hat{G}_l)$ is converted by a reaction in $\mathcal{K}_r$ to some \emph{free} species ${\bf S}_j$ at a lower level, i.e. species ${\bf S}_j$ belongs to the set $\mathcal{D}^{ \sigma }_f (\hat{G}_{l-1})$. For most biological networks, spontaneous degradation is very common and so typically the first death-cascade $\mathcal{D}^{ \sigma }_f (\hat{G}_1)$ is heavily populated (see Section \ref{sec:examples}). The species in $\mathcal{B}$ that do not belong to $\mathcal{D}^{ \sigma }_f (\hat{G}_1)$ usually belong to one of the higher death-cascades because generally they undergo a sequence of conversions to eventually produce a spontaneously degradable species (see Section \ref{sec:examples}). These remarks suggest that for most biological networks we can expect to have
\begin{align}
\label{bdsatinclusion}
\mathcal{B}  =  \mathcal{X},
\end{align}
which is to say that all the \emph{free} species in $\mathcal{B}$ can be arranged into death-cascades by the procedure we just described.

For our main result we need to ensure that each and every molecule of the \emph{free} species in $\mathcal{B}$ can be \emph{flushed-out} from the system. Due to this reason we need to impose certain stoichiometric restrictions on the network as we describe now. Let $\tilde{S}_{\sigma}$ be the $d_f \times K$ stoichiometry matrix for network $\tilde{\mathcal{N} }^{\sigma}$ and let $\tilde{S}_{\sigma}\left[  \mathcal{K}_p \right]$ be the matrix formed by restricting $\tilde{S}_{\sigma}$ to only those columns that correspond to reactions in $\mathcal{K}_p$ (see \eqref{defn_availablereactionsfree}). For any $i \in \mathcal{F}$ the \emph{free} species $\sigma(i) \in \mathcal{D}_f$ is said to be \emph{singularly-degradable} if   
\begin{align}
\label{newcondition0}
- e_i \subset \textnormal{Colspan}_{\N_0} \left(\tilde{S}_{\sigma}\left[  \mathcal{K}_p \right]\right), 
\end{align}
where $e_i$ denotes the $i$-th standard basis vector in $\R^{d_f}$. Note that the column span in this condition is over nonnegative integers $\N_0$. Observe that if a \emph{free} species can degrade spontaneously because of a reaction like $ {\bf S} \longrightarrow \emptyset $ then it is certainly singularly-degradable. Hence condition \eqref{newcondition0} primarily pertains to those \emph{free} species that require a sequence of conversion reactions to produce a spontaneously degradable species. We can easily identify all the singularly-degradable \emph{free} species by computing a modified Hermite normal form\footnote{The Hermite normal form is an analogue of the row-reduced echelon form for integer matrices. For more details see \cite{Cohen}.} of the transpose of the matrix $- \tilde{S}_{\sigma}\left[  \mathcal{K}_p \right]$. For obtaining this normal form, the only admissible operation is the addition of one row to another, possibly after multiplication by a positive integer. Once this Hermite normal form has been computed, the \emph{free} species $\sigma(i)$ is singularly-degradable if and only if there exists a row with the leading entry of $1$ in column $i$ of the Hermite normal form.

We now state the main result of this section Proposition \ref{prop:no_bdd_species}, which determines an irreducible state-space for network $\tilde{\mathcal{N} }^{\sigma}$ and also provides a way to easily check if it is the \emph{only irreducible state-space} for the network in the infinite state-space $\tilde{\mathcal{E} }^{\sigma}_0 = \N^{d_f}_0$. To state our main result, we shall consider the network dynamics under the permutation $\sigma = \sigma_3 :  \mathcal{D} \to \mathcal{D}$ defined by
\begin{align}
\label{permsigma3}
\sigma_3 ( i ) =  \left\{
\begin{array}{cl}
j_i  & \textnormal{    for   } i =1,\dots,d_{fb} \\
m_{i - d_{fb}}& \textnormal{    for   } i =(d_{fb}+1),\dots,d_f \\
\sigma_2(i) & \textnormal{    for   } i =(d_f+1,\dots,(d_f+ d_r),
\end{array} \right. 
\end{align}
where $d_{fb} = | \mathcal{B}  |$ be the total number of \emph{free} species that can be arranged into birth-cascades, the set $\mathcal{B}$ is $\mathcal{B} = \{ j_1,\dots,j_{d_{fb}} \}$ and the set of remaining \emph{free} species is \\$\mathcal{D}_f \cap \mathcal{B}^c = \{m_1,\dots,m_{d_f - d_{fb} }\}$. Observe that under this permutation the copy-numbers of all the \emph{free} species in $\mathcal{B}$ occupy the first $d_{fb}$ components of the state vector.
\begin{proposition}
\label{prop:no_bdd_species}
Assume that network $\tilde{\mathcal{N} }^{\sigma}$ satisfies Assumption \ref{massacttype} and it has no bounded species (i.e. $d_b = 0$). Suppose that permutation $\sigma$ is defined by \eqref{permsigma3} and \eqref{bdsatinclusion} holds. Then we have the following:
\begin{itemize}
\item[(A)] If all the \emph{free} species in $\mathcal{B}$ are singularly-degradable then $\N_0^{ d_{fb} } \times \{ \mathbf{0} \}$ is an irreducible state-space for network $\tilde{\mathcal{N} }^{\sigma}$, where $\mathbf{0}$ denotes the $(d_f - d_{fb})$-dimensional vector of zeros.
\item[(B)] Additionally if all the \emph{free} species in $\mathcal{D}_f\cap \mathcal{B}^c$ are also singularly-degradable then $\N_0^{ d_{fb}} \times \{ \mathbf{0} \}$ is {\bf the only} irreducible state-space for network $\tilde{\mathcal{N} }^{\sigma}$.
\end{itemize}
\end{proposition}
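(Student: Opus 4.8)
Write $\mathcal{C} := \N^{d_{fb}}_0 \times \{\mathbf{0}\} \subset \tilde{\mathcal{E}}^{\sigma}_0 = \N^{d_f}_0$, and recall that under the relabelling \eqref{permsigma3} the first $d_{fb}$ free-species coordinates carry precisely the species of $\mathcal{B} = \mathcal{D}^{\sigma}_f(G_{l_b})$. The plan is to verify the two defining properties of an irreducible state-space: that $\mathcal{C}$ is \emph{closed} under $\tilde{\mathcal{N}}^{\sigma}$, and --- for (A) --- that all states of $\mathcal{C}$ intercommunicate, while --- for (B) --- that every state of $\tilde{\mathcal{E}}^{\sigma}_0$ reaches $\mathcal{C}$. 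Closedness is the easy half: if a state $x$ vanishes outside $\mathcal{B}$, then by Assumption \ref{massacttype} a reaction $k$ can fire at $x$ only if $x \geq \tilde{\nu}^{\sigma}_k$, which forces $\textnormal{supp}(\tilde{\nu}^{\sigma}_k) \subset G_{l_b}$ and hence $k \in \mathcal{K}_r$; then \eqref{reactioninclusionbirthsaturation} gives $k \in \mathcal{K}_p$, so $\textnormal{supp}(\tilde{\rho}^{\sigma}_k) \subset G_{l_b}$ and the updated state $x + \tilde{\zeta}^{\sigma}_k$ (with $\tilde{\zeta}^{\sigma}_k := \tilde{\rho}^{\sigma}_k - \tilde{\nu}^{\sigma}_k$) again vanishes outside $\mathcal{B}$.

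The communication statements are built from two reachability moves. The \emph{upward move}: by the birth-cascade construction (Lemma \ref{lem:level3}), from any state one can reach a state in which every species of $\mathcal{B}$ has an arbitrarily large copy-number. The \emph{downward move}: if $\sigma(i)$ is singularly-degradable, so that $-e_i = \sum_{k \in \mathcal{K}_p} a_k \tilde{\zeta}^{\sigma}_k$ with $a_k \in \N_0$, then from a state whose $\mathcal{B}$-coordinates are large enough one can reach the same state with one molecule of $\sigma(i)$ deleted. To justify this I would first project the identity $-e_i = \sum_k a_k \tilde{\zeta}^{\sigma}_k$ onto the coordinates \emph{outside} $\mathcal{B}$, where every reaction in $\mathcal{K}_p$ has non-positive stoichiometry (it produces nothing there); a non-negative combination equal to $-e_i$ then forces each reaction with $a_k > 0$ to neither produce nor consume any species outside $\mathcal{B}$ apart from the single deleted molecule of $\sigma(i)$ (when $i$ labels a $\mathcal{B}^c$-species), so all reactions used in fact lie in $\mathcal{K}_r$ and touch only species of $\mathcal{B}$ (plus, possibly, $\sigma(i)$). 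One then realizes the algebraic identity as a genuine positive-probability reaction sequence: inflate the $\mathcal{B}$-coordinates sufficiently via the upward move, fire the $\mathcal{K}_r$-reactions of the recipe in a suitable order, and invoke Assumption \ref{massacttype} to see every intermediate propensity stays positive. Iterating, one deletes molecules of singularly-degradable species one at a time and --- the counts having been inflated first --- can bring any such coordinate down to any prescribed value.

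Part (A) then follows by showing the origin communicates with every $(x,\mathbf{0}) \in \mathcal{C}$: to reach $(x,\mathbf{0})$ from $\mathbf{0}$, use the upward move to overshoot $x$ in every $\mathcal{B}$-coordinate, then use the downward move (available since every species of $\mathcal{B}$ is singularly-degradable, with the death-cascade path \eqref{defn_dcp} --- valid by hypothesis \eqref{bdsatinclusion} --- dictating the order in which to carry out the deletions) to trim each coordinate down to $x_i$; by the same argument $(x,\mathbf{0})$ reaches $\mathbf{0}$. Transitivity of $\stackrel{\tilde{\mathcal{N}}^{\sigma}}{\longrightarrow}$ shows all states of $\mathcal{C}$ intercommunicate, and together with closedness this identifies $\mathcal{C}$ with a closed communication class, i.e. an irreducible state-space. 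For Part (B), suppose in addition that every species of $\mathcal{D}_f \cap \mathcal{B}^c$ is singularly-degradable. Then from an arbitrary state $z \in \N^{d_f}_0$ one inflates the $\mathcal{B}$-coordinates by the upward move and then applies the downward move to each $\mathcal{B}^c$-species in turn (its recipe only additionally disturbs the now-abundant $\mathcal{B}$-species), landing in $\mathcal{C}$. Thus every state reaches $\mathcal{C}$; since $\mathcal{C}$ is a closed communication class by Part (A) and the communication classes partition $\tilde{\mathcal{E}}^{\sigma}_0$, any irreducible state-space --- being closed and containing a point of $\mathcal{C}$ --- must equal $\mathcal{C}$, so $\mathcal{C}$ is the only one.

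The main obstacle is the downward move: upgrading the purely linear-algebraic notion of singular-degradability to an honest reaction sequence with positive propensities throughout. This needs the support/sign bookkeeping on the coordinates outside $\mathcal{B}$ together with a careful firing order and a sufficiently generous preliminary inflation of the $\mathcal{B}$-copy-numbers; the death-cascade path \eqref{defn_dcp} and the duality \eqref{reln:netinverse} between a network and its inverse are the natural tools for organising this, and are where hypothesis \eqref{bdsatinclusion} enters. Once that move is established, closedness, the origin-to-everything reachability, and the partition argument of Part (B) are routine, so I expect the proof to be laid out exactly along these lines.
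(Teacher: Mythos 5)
Your overall architecture (closedness via $\mathcal{K}_r\subset\mathcal{K}_p$, an upward move via the birth-cascades, a downward move via singular-degradability, and the disjointness/partition argument for uniqueness) mirrors the paper's, which proves this proposition as a special case of Theorem \ref{maintheorem} via Lemmas \ref{lem:level3}, \ref{lem:deathexhaustive}, \ref{lem:level3d} and \ref{lem:level3d1}. However, your downward move contains a genuine gap. The singular-degradability recipes only yield the analogue of Lemma \ref{lem:level3d}: one can go from $x_1$ down to $x_2$ provided \emph{both} states stay above a threshold $r_0$ (each single-molecule deletion needs the intermediate states to dominate the reactant vectors of the recipe, and these may involve several species of $\mathcal{B}$, including the one being trimmed). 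Your claim that by iterating one ``can bring any such coordinate down to any prescribed value'' is therefore unjustified, and it is false in general: take two free species with reactions $\emptyset\to A$, $A\to A+B$, $A+B\to A$, $A+B\to B$. Both $A$ and $B$ are singularly-degradable (their deletion recipes are the single reactions $A+B\to B$ and $A+B\to A$), yet the origin is unreachable from any nonzero state, since removing the last molecule of either species requires a molecule of the other to be present. This does not contradict the proposition, because here \eqref{bdsatinclusion} fails ($\hat{G}_1=\emptyset$, so $\mathcal{X}=\emptyset\neq\mathcal{B}$) --- but it shows that \eqref{bdsatinclusion} cannot enter merely as ``dictating the order in which to carry out the deletions'', as you propose; it must supply a genuinely different mechanism for reaching low-copy-number states.

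Concretely, what is missing is the paper's Lemma \ref{lem:deathexhaustive}: using \eqref{bdsatinclusion}, one applies the birth-cascade argument (Lemma \ref{lem:level3}) to the \emph{inverse} network $\tilde{\mathcal{N}}^{\sigma}_{\textnormal{inv}}(\mathcal{B})$ and translates back through the duality \eqref{reln:netinverse}, producing one specific large state $x'\geq r_0$ from which the dynamics can drain all the way to $\mathbf{0}$. To then land exactly on an arbitrary small target $x$ (and to show $(x,\mathbf{0})\to\mathbf{0}$), the paper routes through $x+x'$ and invokes positive additivity (Proposition \ref{prop_reach_reln_add}): from $(x',\mathbf{0})\to\mathbf{0}$ one gets $(x+x',\mathbf{0})\to(x,\mathbf{0})$, while the trimming lemma only needs to take the inflated state down to $x+x'\geq r_0$. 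Your sketch omits both the inverse-network draining step and this additivity detour, and without them the intercommunication of all states of $\N_0^{d_{fb}}\times\{\mathbf{0}\}$ in part (A) is not established; your treatment of part (B) (zeroing the $\mathcal{B}^c$-coordinates while the $\mathcal{B}$-coordinates are inflated, then invoking disjointness of communication classes) is sound and matches Lemma \ref{lem:level3d1}, but it rests on part (A).
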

\begin{proof}
This proposition is a special case of Theorem \ref{maintheorem} that is proved in the Appendix.
\end{proof}

We end this section with a simple example that emphasizes the importance of imposing the condition of being singularly-degradable on the \emph{free} species. Consider a network in which a single species ${\bf S}$ participates in the following two reactions
\begin{align*}
\emptyset \longrightarrow 2 {\bf S} \longrightarrow \emptyset. 
\end{align*}
We assume that the propensity functions follow mass-action kinetics. In this network the molecules of ${\bf S}$ are produced and degraded in \emph{pairs}. One can easily check that for this network $\mathcal{B} = \mathcal{X}= \{ {\bf S} \}$, but $\N_0$ is {\bf not} an irreducible state-space because species ${\bf S}$ is not singularly-degradable. One can check that this network has two irreducible state-spaces: the set of all odd integers $\mathbb{O}= \{1,3,5,\dots\}$ and the set of all even integers $\mathbb{E}= \{0,2,4,\dots\}$.

\subsection{Networks with both \emph{unbounded} and \emph{bounded} species} \label{sec:freebddspecies}

In this section we consider the general case where network $\tilde{\mathcal{N} }^{\sigma} =  ( \tilde{ \mathcal{V} }^{\sigma},\tilde{ \mathcal{O} }^{\sigma},  \tilde{ \Lambda }^{\sigma} )$ has both \emph{unbounded} and \emph{bounded} species and so all the irreducible state-spaces will lie in the infinite set $\tilde{\mathcal{E} }^{\sigma}_0 = \mathcal{E}_b^{\sigma} \times \N^{d_f}_0$ after the \emph{restricted} species have been removed (see Section \ref{sec:netreductionbyelimination}). We now develop a procedure to identify all these state-spaces by \emph{intertwining} the matrix-based approach in Section \ref{sec:netbounded} with the birth-death cascade construction described in Section \ref{sec:freespecies}. However instead of linear \emph{path-like} cascades we need to construct \emph{tree-like} structures as we explain below.

We first fix some notation. For each reaction $k=1,\dots,K$ let $\bar{\nu}^{\sigma}_k$, $ \tilde{\nu}^{\sigma}_k$, $\bar{\rho}^{\sigma}_k$ and $ \tilde{\rho}^{\sigma}_k$ denote the $k$-th column of matrices $\textnormal{Proj}( \tilde{ \mathcal{V} }^{\sigma},1,d_b)$, $\textnormal{Proj}( \tilde{ \mathcal{V} }^{\sigma},d_b+1,d_b+d_f)$, $\textnormal{Proj}( \tilde{ \mathcal{O} }^{\sigma},1,d_b)$ and $\textnormal{Proj}( \tilde{ \mathcal{O} }^{\sigma},d_b+1,d_b+ d_f)$ respectively. Recall that the set of all \emph{free} species is $\mathcal{D}_f = \{ \sigma(d_b+1),\dots, \sigma(d_b+d_f)\}$ and let $\mathcal{F} = \{1,\dots,d_f\}$ denote the set of addresses of all the \emph{free} species under the map $\sigma(d_b+ \cdot)$. For any $A \subset \mathcal{F}$, let $\mathcal{D}^{ \sigma }_f (A) \subset \mathcal{D}_f$ be given by \eqref{defndfsigmaa} with $\sigma(\cdot)$ replaced by $\sigma(d_b+ \cdot)$. Corresponding to such a set $A$ we define a set of reactions as
\begin{align*}
\mathcal{K}_r(  A ) = \{ k=1,\dots,K:  \textnormal{supp}( \tilde{\nu}^{\sigma}_k)   \subset A \},
\end{align*}
which includes only those reactions that do {\bf not} consume any \emph{free} species that lie outside the set $\mathcal{D}^{\sigma}_f(A)$. For any $y \in \mathcal{E}_b^{\sigma}$ define
\begin{align*}
\mathcal{K}_r( y, A  ) = \mathcal{K}_r( y) \cap \mathcal{K}_r( A),
\end{align*}
where $\mathcal{K}_r( y)$ is given by \eqref{defn_availablerxns}. Let $Z(A)$ be the zero-pattern matrix defined by \eqref{zeropatternmat1} with $\mathcal{K}_r( y)$ replaced by $\mathcal{K}_r( y,A)$. Using the procedure described in Section \ref{sec:netbounded} we can compute the associated communication relation $\Theta(A)$ (see \eqref{defnthetaxf}) along with the corresponding set of equivalence or communication classes. We can also compute the reachability relations $R(A)$ (see \eqref{reachabilitycommclassrelan}) among these classes and determine which classes are closed and open. From now on let $\mathcal{C}(A)$ denote the set of closed communication classes under relation $\Theta(A)$.

Suppose $A_1,A_2$ are two subsets of $\mathcal{F}$ such that $ A_1 \subset A_2$. Then certainly $\mathcal{K}_r(  A_1 ) \subset \mathcal{K}_r(  A_2 ) $ and hence $Z_{ij}(A_1) \leq Z_{ij}(A_2)$ for each $i,j$. Consequently $\Theta(A_2)$ has fewer equivalence classes than $\Theta(A_1)$, and each class of $\Theta(A_1)$ is contained in a unique class of $\Theta(A_2)$. Let $C_1$ and $C_2$ be two closed communication classes in the sets $\mathcal{C}(A_1)$ and $\mathcal{C}(A_2)$ respectively. We say that $C_1$ \emph{reaches} $C_2$ if either $C_1 \subset C_2$, or $C_1$ is a subset of an open class $O$ for $\Theta(A_2)$ and the closed class $C_2$ is reachable from $O$ under relation $R(A_2)$. We define a map from $\mathcal{C}(A_1)$ to a subset of $\mathcal{C}(A_2)$ by
\begin{align*}
\Psi_{(A_1,A_2 )}(C_1)=\{ C_2 \in \mathcal{C}(A_2) :  C_1 \textnormal{ reaches } C_2 \}.
\end{align*}
Observe that this map is transitive in the sense that for any $A_1 \subset A_2 \subset A_3 \subset \mathcal{F}$ if $C_2 \in \Psi_{(A_1,A_2 )}(C_1)$ and $C_3 \in \Psi_{(A_2,A_3 )}(C_2)$ then $C_3 \in \Psi_{ (A_1,A_3) }(C_1)$. This transitivity holds because $\mathcal{K}_r(  A_1 ) \subset \mathcal{K}_r(  A_2 ) \subset \mathcal{K}_r(  A_3 )$.

We now develop the notion of a  \emph{Birth-Cascade Tree} (BCT) by generalizing the ideas in Section \ref{sec:freespecies}. This tree is developed in levels or generations indexed by nonnegative integers $l=0,1,2,\dots$, and it is a \emph{directed graph} with nodes in the set
\begin{align}
\label{setofbctnodes}
\mathbb{T} = \{  (C,A) :  A \subset \mathcal{F}  \textnormal{ and } C \in \mathcal{C}(A) \}
\end{align}
and edges in the set $\mathbb{T}  \times \mathbb{T}$. For generation $l=0$ the BCT is initialized by simply adding the nodes $(C, \emptyset)$ for each $C \in \mathcal{C}(\emptyset)$. For any generation $l >1$, the BCT is extended as follows: for each node $(C,A)$ that was added to the BCT in the previous generation $(l-1)$ we compute the set  
\begin{align}
\label{operatorbb}
\mathbb{B}(C,A) = \{ i \in \mathcal{F}: i \notin A \textnormal{ and there exists a } k \in \mathcal{K}_r( C, A  )\textnormal{ such that } i \in \textnormal{supp}( \tilde{\rho}^{\sigma}_k )  \},
\end{align}
where
\begin{align*}
\mathcal{K}_r( C, A  ) = \bigcup_{y \in C}\mathcal{K}_r( y, A  ),
\end{align*}
is the set of those reactions that only consume \emph{free} species in the set $\mathcal{D}^{\sigma}_f(A)$ as reactants and have a positive probability of firing when the dynamics of \emph{bounded} species is in the closed communication class $C$. If $\mathbb{B}(C,A) \neq \emptyset$ then we set $A' = A \cup \mathbb{B}(C,A)$ and for each $C' \in \Psi_{(A,A' )}(C)$ we add the node $(C',A')$ to the BCT along with the directed edge $(C,A) \Rightarrow (C',A')$.

The interpretation of the set $\mathbb{B}(C,A)$ is similar to the interpretation of $\mathbb{B}(A)$ in Section \ref{sec:freespecies} with the only difference being that now the states of \emph{bounded} species can move around in the closed communication class $C$. As the number of \emph{free} species is finite, the BCT will stop growing beyond some generation $l_b$ and this point the construction of BCT is complete and any node $(C,A)$ from which there are no outgoing edges is called a \emph{leaf} of the BCT. For such a leaf node we have
\begin{align}
\label{leafnodecond}
\mathbb{B}(C,A) = \emptyset,
\end{align}
and there exists a set of $(l+1)$ BCT nodes, $\{(C_i,G_i): i = 0,\dots, l \}$ such that $G_0 = \emptyset$, $C_l = C$, $G_l = A$ and the BCT has the following directed path
\begin{align}
\label{defn_bct}
(C_0,G_0) \Rightarrow  (C_1,G_1) \Rightarrow \dots \Rightarrow  (C_l,G_l).  
\end{align}
The transitivity of the map $\Psi$ mentioned above implies that for any $i \in \{ 0,\dots,(l-1) \}$ we have $C_l \in \Psi_{ (G_i,G_l) }(C_i)$. Moreover since the set of reactions $\mathcal{K}_r(G_1),\mathcal{K}_r(G_2),\dots$ is monotonically increasing, if $C_0 \subset C_l$ then $C_i \subset C_l$ for each $i=1,\dots,(l-1)$.

Let $\mathbb{L}$ be the set of all leaf nodes in the BCT. A leaf node $(C,A) \in \mathbb{L}$ is called \emph{minimal} if there is no other leaf node that is strictly contained in $(C,A)$ i.e. if $(C',A') \in \mathbb{L}$, $C' \subset C$ and $A' \subset A$ then $C = C'$ and $A = A'$. Henceforth we denote the set of all minimal leaf nodes by $\mathbb{L}_{ \textnormal{min} }$. Our main result of the paper, Theorem \ref{maintheorem}, will show that under certain conditions these minimal leaf nodes exactly characterize all the irreducible state-spaces for the network $\tilde{\mathcal{N} }^{\sigma}$ in the infinite state-space $\tilde{\mathcal{E} }^{\sigma}_0 = \mathcal{E}_b^{\sigma} \times \N^{d_f}_0$. However before we state this result we need an appropriate notion of death-cascades and we need to impose some stoichiometric restrictions on the network as in Section \ref{sec:freespecies}.

Pick a minimal leaf node $(C,A) \in \mathbb{L}_{ \textnormal{min} }$ and let $\tilde{\mathcal{N} }^{\sigma}(C,A)$ be the network formed by removing all the \emph{free} species outside $\mathcal{D}^{\sigma}_f(A)$ and discarding all the reactions outside $\mathcal{K}_r( C, A  )$. Moreover we also replace the finite state-space $\mathcal{E}_b^{\sigma}$ of the \emph{bounded} species with its subset $C$. This replacement is feasible because \eqref{leafnodecond} holds ensuring that reactions in $\mathcal{K}_r( C, A  )$ cannot produce any \emph{free} species outside $\mathcal{D}^{\sigma}_f(A)$, and $C$ is a closed communication class for the states of \emph{bounded} species under these reactions. Let $\tilde{\mathcal{N} }^{\sigma}_{ \textnormal{inv} }(C,A)$ be the inverse of network $\tilde{\mathcal{N} }^{\sigma}(C,A)$ (see Section \ref{sec:defninverse}). We compute the BCT for network $\tilde{\mathcal{N} }^{\sigma}_{ \textnormal{inv} }(C,A)$ and refer to it as the \emph{Death-cascade Tree} (DCT) for network $\tilde{\mathcal{N} }^{\sigma}(C,A)$. Let $\mathbb{L}_d(C,A)$ denote the set of all leaf nodes of this DCT. Note that for any leaf node $(C',A') \in \mathbb{L}_d(C,A)$ we certainly have $C' \subset C$ and $A' \subset A$. We say that $(C,A)$ is \emph{death-exhaustive} if there exists a leaf node $(C',A') \in \mathbb{L}_d(C,A)$ such that $A' = A$. If such a leaf node exists then it is the only leaf node in $ \mathbb{L}_d(C,A)$ since $C'= C$. This is shown in the proof of Lemma \ref{lem:deathexhaustive}.

Consider a minimal leaf node $(C,A) \in \mathbb{L}_{ \textnormal{min} }$ and let $\tilde{\mathcal{K}}_p(A)$ be the set of all those reactions that do not involve any \emph{bounded} species and do not produce any \emph{free} species outside the set $\mathcal{D}^{\sigma}_f(A)$, i.e. 
\begin{align}
\label{defn_noboundedfreecons}
\tilde{\mathcal{K}}_p(A) = \{ k=1,\dots,K: \textnormal{supp}(\bar{\nu}^{\sigma}_k) =  \textnormal{supp}(\bar{\rho}^{\sigma}_k) =\emptyset \ \  \textnormal{and} \ \  \textnormal{supp}( \tilde{\rho}^{\sigma}_k)   \subset A  \}.
\end{align}
Let $\hat{S}_{\sigma}= \textnormal{Proj}( \tilde{S}_{\sigma}, d_b+1,d_b+ d_f )$ be the $d_f \times K$ matrix formed by the last $d_f$ rows of the $(d_b +d_f) \times K$ stoichiometry matrix $\tilde{S}_{\sigma}$ for network $\tilde{\mathcal{N} }^{\sigma}$ and let $\hat{S}_{\sigma}[  \tilde{\mathcal{K}}_p(A) ]$ be the matrix formed by restricting $\hat{S}_{\sigma}$ to only those columns that correspond to reactions in $\tilde{\mathcal{K}}_p(A)$. Recall that the set of all \emph{free} species is $\mathcal{D}_f = \{ \sigma(d_b+i) : i  \in \mathcal{F} \}$. For any $i \in \mathcal{F}$ we say that the \emph{free} species $\sigma(d_b+ i) \in \mathcal{D}_f$ is \emph{singularly-degradable} w.r.t. $A$ if \eqref{newcondition0} is satisfied with matrix $\hat{S}_{\sigma}[  \tilde{\mathcal{K}}_p(A) ]$. To obtain the irreducible state-space corresponding to the leaf node $(C,A) \in \mathbb{L}_{ \textnormal{min} }$, we shall permute the network according to the permutation $\sigma_3(A) : \mathcal{D} \to \mathcal{D}$ which ensures that the copy-numbers of all the \emph{free} species in $\mathcal{D}^{\sigma}_f(A)$ occupy the components $(d_b+1),\dots, (d_b+ |A|)$ of the state vector. Such a permutation can be defined by setting it to be the same as $\sigma_2$ on the set $\{1,\dots,d_b\}$ and letting its image on the set $\{ (d_b+1),\dots, (d_b+d_f) \}$ be determined by \eqref{permsigma3} with $\mathcal{B} = \mathcal{D}^{\sigma}_f(A)$. We now state the main result of our paper.
\begin{theorem}
\label{maintheorem}
Suppose Assumption \ref{assmp:noofconsrels2} holds, network $\tilde{\mathcal{N} }^{\sigma}$ satisfies Assumption \ref{massacttype} and its state-space is $\tilde{\mathcal{E} }^{\sigma}_0 = \mathcal{E}_b^{\sigma} \times \N^{d_f}_0$. Let $\mathbb{L}$ and $\mathbb{L}_{ \textnormal{min} }$ be the sets of all leaf nodes and minimal leaf nodes for the BCT of the network. Then we have the following: 
\begin{itemize}
\item[(A)] Pick any minimal leaf node $(C,A) \in \mathbb{L}_{ \textnormal{min} }$. Suppose that this leaf node is death-exhaustive and all the free species in $\mathcal{D}^{ \sigma }_f(A)$ are singularly-degradable w.r.t. $A$. Then the set $C \times \N_0^{ |A| } \times \{ \mathbf{0} \}$ is an irreducible state-space for network $\tilde{\mathcal{N} }^{\sigma}$ permuted according to permutation $\sigma = \sigma_3(A)$, where $\mathbf{0}$ denotes the $(d_f - |A|)$-dimensional vector of zeros. Additionally if all the free species in the set $\mathcal{D}_f \cap ( \mathcal{D}^{ \sigma }_f(A))^c $ are also singularly-degradable w.r.t. $A$ then $C \times \N_0^{ |A| } \times \{ \mathbf{0} \}$ is {\bf the only} irreducible state-space for network $\tilde{\mathcal{N} }^{\sigma}$ that can have a nonempty intersection with the set $C \times \N_0^{d_f}$. 

\item[(B)] Suppose that all leaf nodes are minimal (i.e. $ \mathbb{L}  =  \mathbb{L}_{ \textnormal{min} }$), each leaf node $(C,A) \in \mathbb{L}$ is death exhaustive and all the free species in $\mathcal{D}_f$ are singularly-degradable w.r.t. $A$. Then up to the relabeling of free species in $\mathcal{D}_f$, all the irreducible state-spaces for network $\tilde{\mathcal{N} }^{\sigma}$ are given by
\begin{align*}
C \times \N_0^{ |A| } \times \{ \mathbf{0} \} \quad \textnormal{ for all }\quad  (C,A) \in  \mathbb{L}.
\end{align*}
\end{itemize}
\end{theorem}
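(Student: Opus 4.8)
\medskip
\noindent\textbf{Plan of proof.}
The plan is to prove both parts by showing that, for each relevant leaf node $(C,A)$, the candidate set $\mathcal E_{(C,A)}:=C\times\N_0^{|A|}\times\{\mathbf 0\}$ — taken in the coordinates of $\sigma=\sigma_3(A)$, with $\mathbf 0$ the zero vector of $\N_0^{d_f-|A|}$ — is a \emph{closed communication class} for the relation $\stackrel{\tilde{\mathcal N}^{\sigma}}{\longleftrightarrow}$, and then that every state of the relevant region reaches some such set; since an irreducible state-space is by definition exactly a closed communication class, this yields the characterization. I would first isolate three auxiliary facts. (i) A \emph{birth lemma}, generalizing Lemma \ref{lem:level3}: along the BCT path $(C_0,G_0)\Rightarrow\cdots\Rightarrow(C_l,G_l)=(C,A)$ the dynamics can drive every free species in $\mathcal D^{\sigma}_f(A)$ to an arbitrarily large copy-number while \emph{simultaneously} steering the bounded coordinate to any prescribed state of $C$. (ii) A \emph{death lemma}, built from death-exhaustiveness (Lemma \ref{lem:deathexhaustive}) together with the singular-degradability hypothesis: from any state whose free-species support lies in $A$ one can return to a state with all free coordinates equal to $\mathbf 0$ without disturbing the bounded coordinate. (iii) The network/inverse duality \eqref{reln:netinverse}, which is what turns the DCT (the BCT of the inverse network) into genuine forward reaction sequences used in (ii). Positive additivity (Proposition \ref{prop_reach_reln_add}) will be invoked throughout to upgrade ``one molecule can be made/removed'' to ``any number can be'', and the matrix machinery $Z(A),\Omega,\Theta(A),R(A),\mathcal C(A)$ of Section \ref{sec:netbounded} handles all bounded-species bookkeeping.

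I would dispatch \emph{closedness} of $\mathcal E_{(C,A)}$ first, as it is routine: at a state $(y,x_f,\mathbf 0)\in\mathcal E_{(C,A)}$, Assumption \ref{massacttype} forces an enabled reaction $k$ to satisfy $\textnormal{supp}(\tilde{\nu}^{\sigma}_k)\subset A$ and $y\ge\bar{\nu}^{\sigma}_k$, hence $k\in\mathcal K_r(y,A)\subset\mathcal K_r(C,A)$; the leaf condition $\mathbb B(C,A)=\emptyset$ then forbids $k$ from creating a free species outside $A$, so the last $d_f-|A|$ coordinates stay $\mathbf 0$, while $C$ being a closed communication class for $\Theta(A)$ keeps the bounded coordinate in $C$. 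For \emph{irreducibility} I would fix a reference state $z^{\star}=(y^{\star},\mathbf 0,\mathbf 0)$ with $y^{\star}\in C$ and prove $z^{\star}\stackrel{\tilde{\mathcal N}^{\sigma}}{\longleftrightarrow}z$ for every $z\in\mathcal E_{(C,A)}$; transitivity of communication then gives irreducibility. The direction $z^{\star}\to z$ is the birth lemma applied along the BCT path, processing generations one at a time and using the monotonicity $\mathcal K_r(G_0)\subset\mathcal K_r(G_1)\subset\cdots$ and the transitivity of $\Psi$ to fire, in each generation, a reaction producing each newly available free species and then replicate it by positive additivity, while $\Omega$ moves the bounded coordinate within $C$ to the target. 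The direction $z\to z^{\star}$ is the death lemma applied inside the reduced network $\tilde{\mathcal N}^{\sigma}(C,A)$: death-exhaustiveness supplies a DCT leaf whose free-species set is \emph{all} of $A$, so every free species in $A$ lies in some death-cascade there; processing the cascades from the highest level down and using singular-degradability w.r.t.\ $A$ (each $-e_i$ lying in $\textnormal{Colspan}_{\N_0}(\hat{S}_{\sigma}[\tilde{\mathcal K}_p(A)])$) to peel off the current species one molecule at a time, with \eqref{reln:netinverse} realizing these peels as positive-probability forward sequences, the state is brought to all-free-zero without touching any bounded coordinate (reactions in $\tilde{\mathcal K}_p(A)$ involve no bounded species) or any free coordinate outside $A$ (those reactions produce none); one then navigates the bounded coordinate to $y^{\star}$ via $\Omega$ as before. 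Proposition \ref{mainprop_reducednetwork} transfers the conclusion back to $\mathcal N^{\sigma}$ where required.

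For the \emph{uniqueness} assertion in part (A): let $\mathcal E'$ be an irreducible state-space with $\mathcal E'\cap(C\times\N_0^{d_f})\neq\emptyset$, and pick $(y,x_f)$ in the intersection with $y\in C$. Since the free species outside $\mathcal D^{\sigma}_f(A)$ are also singularly-degradable w.r.t.\ $A$ and the reactions in $\tilde{\mathcal K}_p(A)$ neither involve bounded species nor create any free species outside $A$, one drains those outside-$A$ coordinates to $\mathbf 0$ and then, by the death lemma, the $A$-coordinates, landing inside $\mathcal E_{(C,A)}$; since $\mathcal E'$ is closed it must contain that landing state, and two closed communication classes sharing a state coincide, so $\mathcal E'=\mathcal E_{(C,A)}$. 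Part (B) is the same argument with the restriction $y\in C$ removed: from an arbitrary state one drains all free species (all of $\mathcal D_f$ is singularly-degradable w.r.t.\ the relevant $A$), reaches the bounded-only layer, and is carried along a branch of the BCT rooted at some $(C_0,\emptyset)\in\mathbb T$ into some $\mathcal E_{(C,A)}$ with $(C,A)$ a leaf (necessarily minimal, by hypothesis); combined with part (A) — which makes each such set a closed class — and with the fact that $(C,A)$ is recovered from $\mathcal E_{(C,A)}$ (so distinct leaves give distinct, hence disjoint, classes), this exhibits exactly the asserted list of irreducible state-spaces.

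I expect the main obstacle to be the death direction. Singular-degradability is only a statement about the \emph{integer cone} generated by certain stoichiometry columns, so converting $-e_i\in\textnormal{Colspan}_{\N_0}(\hat{S}_{\sigma}[\tilde{\mathcal K}_p(A)])$ into a reaction sequence that is \emph{enabled at every intermediate state} (Assumption \ref{massacttype}) requires ordering the reactions along the DCT level structure and, where needed, first inflating copy-numbers via the birth side so that enough reactant molecules are present; death-exhaustiveness is precisely the hypothesis that makes this ordering reach all of $A$. Interleaving the birth and death halves, and the careful bookkeeping that the bounded coordinates and the outside-$A$ free coordinates are genuinely preserved (or monotonically drained) throughout, is the technical core that I would expect to occupy most of Section \ref{sec:mainproof}.
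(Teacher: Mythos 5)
Your proposal takes essentially the same route as the paper's proof in Section \ref{sec:mainproof}: a birth lemma along the BCT path (Lemma \ref{lem:level3}), a death step combining death-exhaustiveness via the inverse network (Lemma \ref{lem:deathexhaustive}) with above-threshold decrements from singular-degradability (Lemmas \ref{lem:level3d} and \ref{lem:level3d1}), positive additivity (Proposition \ref{prop_reach_reln_add}) to inflate copy-numbers so intermediate states are enabled, and disjointness of closed communication classes for the uniqueness claims and part (B). The only difference is bookkeeping: the paper splits your single ``death lemma'' into the inverse-network descent and separate threshold-decrement lemmas rather than a level-ordered peeling, and in part (B) it settles the bounded coordinate and climbs the BCT branch before draining the free species, but the content is the same.
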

\begin{proof}
This theorem is proved in the Appendix and it generalizes Proposition \ref{prop:no_bdd_species} by taking the dynamics of \emph{bounded} species into account.
\end{proof}

We end this section with a couple of useful remarks.
\begin{remark}
\label{rem:bctissinglepath}
Commonly in Systems Biology networks (see Section \ref{sec:examples}) there is only one leaf node $(C,A)$ and there is only one class $C_0$ in $\mathcal{C}( \emptyset )$ such that $C_0 \subset C$. Hence all leaf nodes are minimal (because $ \mathbb{L}  =  \mathbb{L}_{ \textnormal{min} } = \{ (C,A)\}$) and one can see that the BCT reduces to a single path $$(C_0,\emptyset) \Rightarrow  (C_1,G_1) \Rightarrow \dots \Rightarrow  (C_{l_b},G_{l_b} ) = (C,A).$$ 
In such a scenario we can talk about the birth-cascades for the network in the same way as in Section \ref{sec:freespecies}. In particular $\mathcal{D}^\sigma_f(G_l)$ is the set of \emph{free} species that belong to the $l$-th birth cascade. If the leaf node $(C,A)$ is death-exhaustive as well, then the Death-Cascade Tree (DCT) for network $\tilde{\mathcal{N} }^{\sigma}(C,A)$ is also a single path and so we can also talk about the death-cascades for the network in the same way as in Section \ref{lem:deathexhaustive}.
\end{remark}

\begin{remark}
\label{rem:originalclasses}
Suppose from part (A) of Theorem \ref{maintheorem} we obtain an irreducible state-space for network $\tilde{\mathcal{N} }^{\sigma}$. Then the corresponding irreducible state-space for the original network $\mathcal{N}$ can be easily identified using Propositions \ref{mainprop_reducednetwork} and \ref{prop:equivstatespacepermutation}. Similarly if {\bf all} the irreducible state-spaces for network $\tilde{\mathcal{N} }^{\sigma}$ can be found using part (B) of Theorem \ref{maintheorem}, then we can identify all the irreducible state-spaces for the original network $\mathcal{N}$ using Propositions \ref{mainprop_reducednetwork} and \ref{prop:equivstatespacepermutation}.   
\end{remark}

\section{ Algorithms} \label{sec:algorithms}

The aim of this section is to provide detailed algorithmic descriptions of various procedures that can be used to apply the results in this paper. We start with network $\mathcal{N} = ( \mathcal{V}, \mathcal{O}, \Lambda)$ with $d$ species in the set $\mathcal{D}=\{1,\dots,d\}$ and $K$ reactions of the form (see Section \ref{sec:prelim}). We assume that this network satisfies Assumption \ref{massacttype} and its conservation data is $(\Gamma,c)$. Our first goal is to find a decomposed state-space $\mathcal{E}^{\sigma}_0$ of the form \eqref{defn_dss} under some suitably constructed permutation $\sigma : \mathcal{D} \to \mathcal{D}$. This is accomplished in the method $\Call{FindDecomposedStateSpace}{\cdot}$ (see Algorithm \ref{finddecompss}). This method starts by identifying the \emph{bounded} species and finding their optimal state-space (see Section \ref{sec:findingboundedstatespace}). It then computes the numbers of \emph{free} ($d_f$) and \emph{restricted} ($d_r$) species, and if $d_r>0$, then it tries to classify the \emph{unbounded} species into \emph{free} and \emph{restricted} species in such a way that the associated affine map $\phi$ (see \eqref{defn_map_phi}) is compatible with the reaction network (see Assumption \ref{affinefunction}). Such a compatible map is found by iterating through the set $\mathcal{I}_f$ (see \eqref{defn_indexsetfreespecies}), whose elements determine all plausible partitions of the set of \emph{unbounded} species ($\mathcal{D}_u$) into the sets of \emph{free} ($\mathcal{D}_f$). The outputs returned by $\Call{FindDecomposedStateSpace}{\cdot}$ are the permutation map $\sigma$ and the decomposed state-space $\mathcal{E}^{\sigma}_0$.

If the network has any \emph{restricted} species (i.e. $d_r > 0$), then we construct the reduced network $\tilde{ \mathcal{N} }^{ \sigma } =   ( \tilde{ \mathcal{V} }, \tilde{ \mathcal{O}}, \tilde{ \Lambda})$ along with the associated permutation $\sigma = \sigma_2$ (see \eqref{defn_permsigma2}), by systematically removing the \emph{restricted} species as described in Section \ref{sec:netreductionbyelimination}. Assuming that a compatible affine map $\phi$ was discovered by $\Call{FindDecomposedStateSpace}{\cdot}$, the reduced network $\tilde{ \mathcal{N} }^{ \sigma }$ will also satisfy Assumption \ref{massacttype}, which is necessary for the process of identifying all the irreducible state-spaces. This process is accomplished by our next method $\Call{FindirreducibleStateSpaces}{\cdot}$ (see Algorithm \ref{findrednet}), which works by first constructing the Birth-Cascade Tree (BCT) for network $\tilde{ \mathcal{N} }^{ \sigma }$ and then examining all the minimal leaf nodes of this BCT (see Section \ref{sec:freebddspecies}). The construction of BCT is performed by method \\ $\Call{ConstructBCT}{\cdot}$ (see Algorithm \ref{constructbct}) which returns the set of leaves $\mathbb{L}$ of the BCT.

For each minimal leaf node $(C,A) \in \mathbb{L}_{ \textnormal{min} }$ the method $\Call{FindirreducibleClasses}{\cdot}$ checks if the set $C \times \N^{|A|}_0 \times \{\bf 0\}$ is an irreducible state-space for network $\tilde{ \mathcal{N} }^{ \sigma }$ under the permutation $\sigma = \sigma_3(A)$ (see Section \ref{sec:freebddspecies}) along with whether all the \emph{free} species are singularly-degradable w.r.t. $A$. If both of these are true, then part (A) of Theorem \ref{maintheorem} tells us that $C \times \N^{|A|}_0 \times \{\bf 0\}$ is the only irreducible state-space which can have common elements with the set $C \times \N^{d_f}_0$. If this is the case for all minimal leaf nodes and $ \mathbb{L}  =  \mathbb{L}_{ \textnormal{min} }$, then using $\Call{FindirreducibleClasses}{\cdot}$ we recover all the irreducible state-spaces for our network due to part (B) of Theorem \ref{maintheorem}. Remark \ref{rem:originalclasses} tells us how these irreducible state-spaces for network $\tilde{ \mathcal{N} }^{ \sigma }$ can be mapped to the corresponding irreducible state-space for the original network $\mathcal{N}$.

\begin{algorithm}[H]  
\caption{Finds a decomposed state-space $\mathcal{E}^{\sigma}_0$ of the form \eqref{defn_dss} for a network $\mathcal{N} = ( \mathcal{V}, \mathcal{O}, \Lambda)$ with conservation data $(\Gamma,c)$}      
 \label{finddecompss}
 \begin{algorithmic}[1]
\Require Network $\mathcal{N}$ satisfies Assumption \ref{massacttype} 
\Function{FindDecomposedStateSpace}{$ \mathcal{N}, \Gamma,c $}  
\State For each species $i \in \mathcal{D}$ solve the LP \eqref{LPP2} to compute $b_i$. 
\State Set $\mathcal{D}_b = \{i \in \mathcal{D} : b_i <\infty \}$ and $\mathcal{D}_u = \{i \in \mathcal{D} : b_i  = \infty \}$ to be the sets of \emph{bounded} and \emph{unbounded} species respectively. Also set $d_b = | \mathcal{D}_b |$ and $d_u = |\mathcal{D}_u|$.
\State Select the permutation map $\sigma_1: \mathcal{D} \to \mathcal{D}$ according to \eqref{defn_permsigma1}. 
\State Set $\sigma = \sigma_1$ and construct the permuted network $\mathcal{N}^{\sigma}$ (see Section \ref{sec:srnundersigma}) along with its conservation data $( \Gamma_{\sigma} ,c )$.
\State Compute the finite set $\mathcal{E}^\sigma_b$ according to \eqref{defnEsigmab}. This is the state-space for \emph{bounded} species in $\mathcal{D}_b$.
\State Evaluate $d_f$ and $d_r$ according to \eqref{defnnumfreerestricted}.
\If{ $d_r = 0$ }   \State  {\bf Output:} The decomposed state-space is $\mathcal{E}^{\sigma}_0 = \mathcal{E}^\sigma_b  \times \N^{d_f}_0$ under permutation $\sigma$.
\Else{ } 
\State Verify Assumption \ref{assmp:noofconsrels2} and if it does not hold then \Return {\bf QUIT}
\State Compute the set $\mathcal{I}_f$ given by \eqref{defn_indexsetfreespecies}.
 \ForAll{ $I \in \mathcal{I}_f$ }
\State Set $\mathcal{D}_f = \{ \sigma(d_b+ i) : i \in I \}$ and $\mathcal{D}_r = \{ \sigma(d_b+ i) : i \in I^c \}$ to be the sets of \emph{free} and \emph{restricted} species respectively.
\State Select the permutation map $\sigma_2: \mathcal{D} \to \mathcal{D}$ according to \eqref{defn_permsigma2}.
\State Set $\sigma = \sigma_2$ and construct the permuted network $\mathcal{N}^{\sigma}$ along with its conservation data $( \Gamma_{\sigma} ,c )$
\State Define the affine map $\phi: \R^{d_f} \to \R^{d_r}$ according to \eqref{defn_map_phi}.
\If{ $\phi$ satisfies Assumption \ref{affinefunction} }
\State  {\bf Output:} Map $\phi$ is compatible with network $\mathcal{N}^{\sigma}$.
\State Exit the for-loop and \Goto{marker1}
\EndIf
 \EndFor
\State If $\phi$ does not satisfy Assumption \ref{affinefunction} then  \Return {\bf QUIT}. Otherwise let $\Phi$ be the graph of $\phi$ given by \eqref{defngraphphi} \label{marker1}.
\State {\bf Output:} The decomposed state-space is $\mathcal{E}^{\sigma}_0 = \mathcal{E}^\sigma_b  \times \Phi$ under permutation $\sigma$
\EndIf
\EndFunction
\end{algorithmic}
\end{algorithm}

\begin{algorithm}[H]  
\caption{Identifies the irreducible state-spaces for network $\tilde{ \mathcal{N} }^{ \sigma } =   ( \tilde{ \mathcal{V} }, \tilde{ \mathcal{O}}, \tilde{ \Lambda})$ in the state-space $\mathcal{E}^\sigma_b  \times \N^{d_f}_0$}           
 \label{findrednet}
 \begin{algorithmic}[1]
 \Require Network $\tilde{ \mathcal{N} }^{ \sigma }$ only has \emph{bounded} and \emph{free} species in the sets $\mathcal{D}_b =\{ \sigma(i): i =1,\dots, d_b \}$ and $\mathcal{D}_f =\{ \sigma(d_b+i): i =1,\dots, d_f \}$ respectively. 
 \Function{FindirreducibleStateSpaces}{$\tilde{ \mathcal{N} }^{ \sigma }, \mathcal{E}^\sigma_b  \times \N^{d_f}_0$}
\State Let $\mathbb{L} = \Call{ConstructBCT}{\tilde{ \mathcal{N} }^{ \sigma }, \mathcal{E}^\sigma_b  \times \N^{d_f}_0}$
\State Identify the set of minimal leaf nodes $\mathbb{L}_{ \textnormal{min} } \subset \mathbb{L}$
\State Initialize $\mathbb{L}' = \emptyset$.
 \ForAll{ $(C,A) \in \mathbb{L}_{ \textnormal{min} } $ }
 \State Let $\tilde{\mathcal{N} }^{\sigma}(C,A)$ be the network formed by removing all the \emph{free} species outside $\mathcal{D}^{\sigma}_f(A)$ and discarding all the reactions outside $\mathcal{K}_r( C, A  )$ (see Section \ref{sec:freebddspecies}).
 \State Let $\tilde{\mathcal{N} }^{\sigma}_{ \textnormal{inv} }(C,A)$ be the inverse of network $\tilde{\mathcal{N} }^{\sigma}(C,A)$ (see Section \ref{sec:defninverse}).
 \State Set $\mathbb{L}_d(C,A) = \Call{ConstructBCT}{\tilde{\mathcal{N} }^{\sigma}_{ \textnormal{inv} }(C,A), C \times \N^{d_f}_0}$
 \State Let $\tilde{\mathcal{K}}_p(A)$ be the set of reactions given by \eqref{defn_noboundedfreecons}.
\State Construct the matrix $\hat{S}_{\sigma}[  \tilde{\mathcal{K}}_p(A) ]$ by restricting the stoichiometry matrix for network $\tilde{ \mathcal{N} }^{ \sigma }$ to only its last $d_f$ rows and only the columns that correspond to reactions in $\tilde{\mathcal{K}}_p(A)$.
 \State Compute the set of \emph{free} species $\mathcal{D}_{sd}(A) \subset \mathcal{D}_f$ that are singularly-degradable w.r.t. $A$ i.e.
 \begin{align*}
\mathcal{D}_{sd}(A) =\{ \sigma(d_b+i ): i=1,\dots,d_f \textnormal{    and    } \eqref{newcondition0} \textnormal{ is satisfied with matrix } \hat{S}_{\sigma}[  \tilde{\mathcal{K}}_p(A) ] \}.
\end{align*}
\If{ $\mathbb{L}_d(C,A) =\{(C,A)\}$ \textbf{ AND } $\mathcal{D}_{sd}(A) \subset \mathcal{D}^\sigma_f :=\{ \sigma(d_b+i) : i \in A\}$}
\State Set $\sigma = \sigma_3(A)$ (see Section \ref{sec:freebddspecies}) and construct the permuted network $\tilde{ \mathcal{N} }^{ \sigma }$ (see Section \ref{sec:srnundersigma}).
\State {\bf Output:} The set $C \times \N^{|A|}_0 \times \{ \bf{0} \}$ is an irreducible state-space for network $\tilde{ \mathcal{N} }^{ \sigma }$ under permutation $\sigma$. \label{irreducibleclassesfound}
\If{$\mathcal{D}_{sd}(A) = \mathcal{D}_f$}
\State Update $\mathbb{L}' \gets \mathbb{L}'  \cup \{ (C,A) \}$.
\EndIf
 \EndIf
 \EndFor
 \If{ $\mathbb{L} =\mathbb{L}_{ \textnormal{min} }=  \mathbb{L}' $}
 \State {\bf Output:} The irreducible state-spaces found in step \ref{irreducibleclassesfound} are the {\bf only} irreducible state-spaces for network $\tilde{ \mathcal{N} }^{ \sigma }$ in the state-space $\mathcal{E}^\sigma_b  \times \N^{d_f}_0$ up to the relabeling of \emph{free} species in $\mathcal{D}_f$.
 \EndIf
 \EndFunction
\end{algorithmic}
\end{algorithm}

\begin{algorithm}[H]  
\caption{Constructs the Birth-Cascade Tree (BCT) for network $\tilde{ \mathcal{N} }^{ \sigma } =   ( \tilde{ \mathcal{V} }, \tilde{ \mathcal{O}}, \tilde{ \Lambda})$ with state-space $\mathcal{E}^\sigma_b  \times \N^{d_f}_0$. This BCT is a directed graph with nodes in the set $\mathbb{T}$ given by \eqref{setofbctnodes}.}           
 \label{constructbct}
 \begin{algorithmic}[1]
 \Require Network $\tilde{ \mathcal{N} }^{ \sigma }$ only has \emph{bounded} and \emph{free} species in the sets $\mathcal{D}_b =\{ \sigma(i): i =1,\dots, d_b \}$ and $\mathcal{D}_f =\{ \sigma(d_b+i): i =1,\dots, d_f \}$ respectively. 
\Function{ConstructBCT}{$\tilde{ \mathcal{N} }^{ \sigma }, \mathcal{E}^\sigma_b  \times \N^{d_f}_0$}
\State Initialize $l=0$, $\mathbb{L}  = \emptyset$ and $G_0= \{ (C,  \emptyset):  \textnormal{ for each } C \in \mathcal{C}( \emptyset )  \}$. 
\State Add each node in $G_0$ to the BCT.
\Repeat
\State Set $G_{l+1} = \emptyset$ 
\ForAll{$(C,A) \in G_l $}
\State Compute the set $B = \mathbb{B}(C,A)$ where the operator $\mathbb{B}$ is defined by \eqref{operatorbb}.
\If{ $B \neq \emptyset$}
	\State Set $A' = A \cup B$. 
	\ForAll{$C' \in \Psi_{(A,A' )}(C)$}
	\State Update $G_{l+1} \gets G_{l+1} \cup \{ (C',A') \}$.
	\State Add the node $(C',A')$ to the BCT along with the directed edge $(C,A) \Rightarrow (C',A')$.
	\EndFor
\Else{}
\State Update $ \mathbb{L} \gets  \mathbb{L} \cup \{ (C,A) \}$ 		
\EndIf
\EndFor
\State Update $l \gets l+1$
\Until{ $G_l = \emptyset$}
 \State {\bf Output:} The set of leaf nodes of the BCT is $\mathbb{L}$.
 \EndFunction
\end{algorithmic}
\end{algorithm}

\section{Examples} \label{sec:examples}

In this section we illustrate our results using several networks from Systems Biology. We start by considering a family of simple Gene-Expression networks which illustrate various theoretical ideas developed in this paper (see Section \ref{ex:ge}). Next we consider a couple of Circadian Clock models (Section \ref{ex:cc}) and a Bacterial Heat-Shock response model (Section \ref{ex:hsr}). The networks underlying these models have many species and reactions, and we discuss how the state-space analysis presented in this paper can help in understanding network design as well as the long-term behavior of the associated stochastic models. In Section \ref{ex:toxin} we provide a simple example of a Toxin-Antitoxin network to demonstrate how our results can aid the automatic, real-time application of the \emph{quasi-stationary} approximation to speed up stochastic simulations of the multiscale network. Finally in Section \ref{ex:productdistr}, we present a class of networks, where our analysis along with certain existing results, provide the exact stationary distribution for the stochastic model.

In all the examples, the reactions propensity functions ($\lambda_k$-s) are assumed to have the \emph{mass-action} form \eqref{massactionkinetics} unless otherwise stated. For correct interpretation of our results we provide a ``Species Chart" that encodes the names of network species into the notation used in our paper i.e. $\mathbf{S}_1,\mathbf{S}_2,\dots$. Throughout this section the copy-number of species $\mathbf{S}_i$ is denoted by $x_i$. Often we would need to permute the network (see Section \ref{sec:srnundersigma}) for our analysis. The permutation $\sigma$ for which the final results are shown is given as a vector $\sigma = ( \sigma(1),\dots, \sigma(d) )$, and under this permutation the $i$-th component of the state-vector corresponds to the copy-number of species $\mathbf{S}_{ \sigma(i) }$.

\subsection{Family of Gene-Expression networks} \label{ex:ge}

We now consider several variants of the simple Gene-Expression network given in \cite{MO}. In these networks there is a Gene ($G$), which is responsible for the transcription of \emph{massenger} RNA or mRNA ($M$) molecules, that later translate into the Protein ($P$) of interest. Both mRNA and Protein molecules degrade spontaneously and as in \cite{Kepler} we allow for the Gene to switch between an \emph{active} ($G_{ \textnormal{on} }$) and an \emph{inactive} ($G_{ \textnormal{off} }$) state. For now we assume that the transcription of mRNA is only possible in the active gene state $G_{ \textnormal{on} }$.

The first model (see Network 0 in Figure \ref{figure:ge}{\bf A}) we examine consists of $4$ species (see the Species Chart in Table \ref{secchart:ge}) and $6$ reactions displayed in Figure \ref{figure:ge}{\bf B}. For this network there is only one (independent) conservation relation $\gamma = (1,1,0,0)$ which is semi-positive, and so there are two \emph{bounded} species ${\bf S}_1$ and ${\bf S}_2$, whose copy-numbers $x_1$ and $x_2$ are ``locked" in the relation $x_1 + x_2 = c = 1$, which simply says that the total number of Gene copies $c$ is conserved by the dynamics. 
The other two species ${\bf S}_3 $ and ${\bf S}_4 $ are not involved in this conservation relation, and are hence \emph{free} species. 

\begin{table}[h!]
\begin{align*}
\begin{array}{|l|l|} \hline
\multicolumn{2}{|c|}{\textnormal{Species Chart} } \\ \hline
{\bf S}_1 = G_{ \textnormal{off} } &  {\bf S}_3 = M \\
{\bf S}_2 = G_{ \textnormal{on} }  & {\bf S}_4 = P \\ \hline
\end{array}
\end{align*}
\caption{Species chart for Gene-expression networks in Section \ref{ex:ge}. $G_{ \textnormal{on} }$ and $G_{ \textnormal{off} }$ denote the gene in active and inactive states. $M$ denotes the mRNA and $P$ denotes the corresponding Protein. }
\label{secchart:ge}
\end{table}

\begin{figure}[ht!]
\centering
\frame{\includegraphics[width=0.92\textwidth]{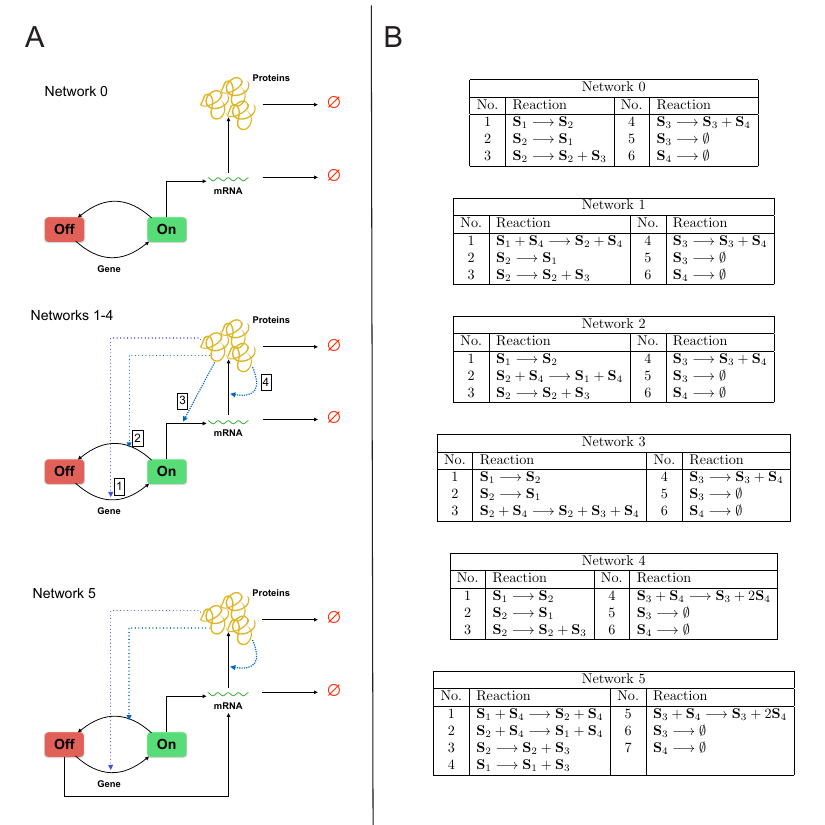}}
\caption{The Gene-expression networks in Section \ref{ex:ge}. The networks are depicted in panel {\bf A} and the corresponding reactions sets are tabulated in panel {\bf B}. The \emph{feedback} interactions are shown with dotted arrows and for each of the Networks 1-4, the corresponding feedback interaction is clearly marked in the middle figure of panel {\bf A}.}
\label{figure:ge}
\end{figure}

Using Algorithms \ref{finddecompss} and \ref{findrednet}, we can identify the decomposed state-space and the irreducible state-spaces for Network 0 (see Table \ref{stspaceanalysis:ge}). One can verify that the state-space $\mathcal{E}^\sigma_b = \{ (1,0), (0,1) \} $ for \emph{bounded} species is a closed communication class in $\mathcal{C}( \emptyset )$ and so the Birth-Cascade Tree (BCT) for this network will only consist of one path whose nodes give us the birth-cascades of \emph{free} species (see Remark \ref{rem:bctissinglepath}). It is easy to see that the length of this path is $3$, the first birth-cascade is $\{ \mathbf{S}_3 \}$ and the second one is $\{ \mathbf{S}_4 \}$. Observe that these two cascades correspond naturally to the two stages in the network, transcription and translation, thereby suggesting that our cascade construction approach is a useful tool for understanding the structure of large Systems Biology networks. This point is reinforced by examples in Sections \ref{ex:cc} and \ref{ex:hsr}. We can check that the only leaf node of BCT is death-exhaustive and recover the death-cascades for the network (see Remark \ref{rem:bctissinglepath}). In Network 0, both \emph{free} species degrade spontaneously and so they are both singularly-degradable w.r.t. any set. Other conditions of Theorem \ref{maintheorem} also hold and so we can use this result to conclude that the full state-space $\mathcal{E}^\sigma_b \times \N^2_0$ is the unique irreducible state-space for Network 0. This concludes our state-space analysis for Network 0.

We now create Networks 1-4 by adding feedback from protein molecules (see \cite{Kepler}) to various reactions in Network 0 (see Figure \ref{figure:ge}{\bf A}). This feedback is added to reaction $k$ by multiplying its original mass-action propensity function $\lambda_k(x)$ by a Hill-type factor of the form
\begin{align}
\label{hilltypefeedbackform}
 \theta_{\textnormal{fb}} \frac{ x^n_4 }{ c  +x^n_4 },
\end{align}
where $x_4$ is the number of protein molecules, and $\theta_{\textnormal{fb}}, c$ and $n$ are strictly positive parameters. Note that $\lambda_k(x) = 0$ if $x_4 = 0$ which means that if we just multiply the propensity function for reaction $k$ in Network 0 by this factor \eqref{hilltypefeedbackform}, then the modified network will not satisfy Assumption \ref{massacttype} which is required for our analysis. However we can circumvent this problem by simply adding a molecule of species $\mathbf{S}_4$ to both sides of reaction $k$, changing it from $A \longrightarrow B$ to $A + \mathbf{S}_4 \longrightarrow B + \mathbf{S}_4$. This simple trick ensures that the modified network satisfies Assumption \ref{massacttype} and its dynamics remains unaffected as the reaction stoichiometry is the same. Incorporating the feedback mechanism this way, we list the reactions for Networks 1-4 in Figure \ref{figure:ge}{\bf B} and provide the results from the state-space analysis of these networks in Table \ref{stspaceanalysis:ge}. Observe that in each of these cases a unique irreducible state-space is guaranteed by Theorem \ref{maintheorem}, but this irreducible state-space varies among networks. Also note that in all the Networks 0-4, the reactions stoichiometries are the same and consequently their decomposed state-space is identical.

\begin{table}[h!]
\begin{align*}
\begin{array}{|c|c|l|} \hline
\textnormal{Network} &  \textnormal{Decomposed State-Space} & \textnormal{Irreducible state-spaces} \\ \hline
0 & \mathcal{E}^\sigma_b \times \N^2_0 & \mathcal{E}^\sigma_b \times \N^2_0 \\
1 &\mathcal{E}^\sigma_b \times \N^2_0  & \{ (1,0,0,0) \}\\
2 & \mathcal{E}^\sigma_b \times \N^2_0 &  \mathcal{E}^\sigma_b \times \N^2_0  \\
3 &\mathcal{E}^\sigma_b \times \N^2_0  & \mathcal{E}^\sigma_b \times \{ (0,0) \} \\
4 &\mathcal{E}^\sigma_b \times \N^2_0  &\mathcal{E}^\sigma_b \times \N_0 \times  \{ 0 \} \\ \hline
5 &\mathcal{E}^\sigma_b \times \N^2_0  &\{(0,1)\} \times \N_0 \times  \{ 0 \} \  \textnormal{ and } \\
& &  \{(1,0)\} \times \N_0 \times  \{ 0 \} \\
\hline
\end{array}
\end{align*}
\caption{State-space analysis for Gene-expression networks in Section \ref{ex:ge}. All results reported w.r.t the identity permutation $\sigma = (1,2,3,4)$ and $\mathcal{E}^\sigma_b = \{ (1,0), (0,1) \}$. For all the five networks \emph{all} irreducible state-spaces were identified by Theorem \ref{maintheorem}.}
\label{stspaceanalysis:ge}
\end{table}

We now consider another gene-expression model (see Network 5 in Figure \ref{figure:ge}) by allowing the transcription of mRNA in the inactive gene state $G_{ \textnormal{off} }$, and having feedback from protein molecules to the translation reaction as well as the gene-switching reactions. In this network also the decomposed state-space is the same as before but there are exactly two irreducible state-spaces, $\{(0,1)\} \times \N_0 \times \{0\}$ and $\{(1,0)\} \times \N_0 \times \{0\}$ according to Theorem \ref{maintheorem} (see Table \ref{stspaceanalysis:ge}).   

We end this section by remarking that if all the Hill-type feedback factors have a positive \emph{basal} level, i.e. instead of \eqref{hilltypefeedbackform} we have
\begin{align*}
 \theta_1 + \theta_{\textnormal{fb}} \frac{ x^n_4 }{ c  +x^n_4 },
\end{align*}
for some $\theta_1 > 0$, then we do not need to add species $\mathbf{S}_4$ to both sides of feedback reactions to ensure that the modified networks satisfy Assumption \ref{massacttype}. Indeed now Networks 1-4 will satisfy this assumption with the original set of reactions (same as Network 0) and hence they all will have the full state-space $\mathcal{E}^\sigma_b \times \N^2_0$ as their unique irreducible state-space. The same holds true for Network 5 even though it has an extra reaction. This illustrates the significance of Assumption \ref{massacttype} in determining the irreducible state-spaces.

\subsection{Two Circadian Clock models} \label{ex:cc}

Circadian clocks are molecular time-keeping devices that coordinate many physiological processes in living organisms \cite{Reppert1}. These clocks generate oscillatory rhythms that are usually \emph{entrained} to the periodic cues provided by the day-light cycles \cite{Bagheri,Herzog1}. We consider two circadian clock networks in this section and prove using our analysis that there exists a unique irreducible state-space for both these models, thereby indicating that the stationary distributions for the associated stochastic models is unique. The existence of these stationary distribution can be checked using the techniques in \cite{GuptaPLOS} and hence the stochastic models for both these networks are ergodic (see Section \ref{sec:intro}). Therefore under constant inputs the individual stochastic trajectory of a single circadian clock may be oscillatory, but the mean trajectories, corresponding to the bulk or population-level behavior of several uncoupled and identical circadian clocks, {\bf cannot} be oscillatory due to \eqref{ergodicconvergenceoff}. This is consistent with both computational \cite{Bagheri,GuptaPLOS} and experimental \cite{Welsh2004} observations in the existing literature. In a recent paper it is argued that this loss of oscillatory activity at the population-level plays an important role in ensuring that the entrainment to periodic cues is robust at the population-level \cite{GuptaEntrainment}.

The first circadian clock model we consider is from Vilar et al. \cite{Vilar} and it is depicted in Figure \ref{figure:ccvilar}{\bf A}. It consists of gene-expression modules for an \emph{activator} protein $A$ and a \emph{repressor} protein $R$ which sequesters the activator protein $A$ by forming an inactive complex $AR$. The activator protein $A$ can enhance the transcription of both the mRNAs ($M_A$ and $M_R$) by binding to the promoter regions of the activator gene $D_A$ and the repressor gene $D_R$. When the promoter region is occupied the activator and the repressor genes are denoted by $D'_A$ and $D'_R$ respectively. The overall network consists of $9$ species (see the Species Chart in Table \ref{secchart:ccvilar}) and $16$ reactions (see Table \ref{rxnchart:ccvilar}). The results from our state-space analysis on this network are presented in Figure \ref{figure:ccvilar}{\bf B} and they show that the network has a unique irreducible state-space which coincides with its decomposed state-space. For this network the situation of Remark \ref{rem:bctissinglepath} applies and so we can arrange all the \emph{free} species into birth and death cascades (Figure \ref{figure:ccvilar}{\bf C}). These cascades correspond naturally to different stages in the network.

\begin{table}[h!]
\begin{align*}
\begin{array}{|l|l|} \hline
\multicolumn{2}{|c|}{\textnormal{Species Chart} } \\ \hline
{\bf S}_1 = M_{ A } &  {\bf S}_6 = G^b_{ A } \\
{\bf S}_2 = A  & {\bf S}_7 = G^u_{ A }  \\
{\bf S}_3 = M_R &  {\bf S}_8 =  G^b_{ R}  \\
{\bf S}_4 = R &  {\bf S}_9 =  G^u_{ R} \\
{\bf S}_5 = AR &    \\  \hline
\end{array}
\end{align*}
\caption{Species chart for the first Circadian clock model (Vilar et al. \cite{Vilar}). $A$ and $R$ denote the activator and repressor proteins. $G_A$ ($M_A$) and $G_R$ ($M_R$) denote the gene (mRNA) corresponding to these proteins. The superscript $b$ or $u$ indicates the \emph{bound} or \emph{unbound} from of the gene. $AR$ denotes the dimeric complex between $A$ and $R$.}
\label{secchart:ccvilar}
\end{table}

 \begin{table}[h!]
 \begin{center}
\begin{tabular}{|c|l|c|l|} \hline
No. & Reaction & No. & Reaction \\\hline
1 & $\mathbf{S_6} + \mathbf{S_2} \longrightarrow \mathbf{S}_7 $ & 9 & $\mathbf{S_2} \longrightarrow \emptyset $ \\
2 & $\mathbf{S}_7 \longrightarrow  \mathbf{S_6} + \mathbf{S_2} $ & 10 & $\mathbf{S_9} \longrightarrow \mathbf{S}_9 + \mathbf{S_3}  $ \\
3 & $\mathbf{S_8} + \mathbf{S_2} \longrightarrow \mathbf{S}_9 $ & 11 & $\mathbf{S_8} \longrightarrow \mathbf{S}_8 + \mathbf{S_3}  $ \\
4 & $\mathbf{S}_9 \longrightarrow   \mathbf{S_8} + \mathbf{S_2} $ & 12 & $ \mathbf{S_3} \longrightarrow \emptyset $ \\
5 & $\mathbf{S_7}  \longrightarrow \mathbf{S}_7+ \mathbf{S_1} $ & 13 & $\mathbf{S_3} \longrightarrow \mathbf{S}_3 + \mathbf{S_4}  $ \\
6 & $\mathbf{S_6}  \longrightarrow \mathbf{S}_6+ \mathbf{S_1}  $ & 14& $ \mathbf{S_4} \longrightarrow \emptyset $ \\
7 & $ \mathbf{S_1} \longrightarrow \emptyset$ & 15 & $\mathbf{S_2} + \mathbf{S_4} \longrightarrow \mathbf{S}_5$ \\
8 & $\mathbf{S_1} \longrightarrow \mathbf{S_1} + \mathbf{S}_2 $ & 16 & $\mathbf{S_5} \longrightarrow \mathbf{S}_4 $ \\
\hline
\end{tabular}
\end{center}
\caption{Reactions for the first Circadian clock model (Vilar et al. \cite{Vilar}). These reactions follow the Species Chart in Table \ref{secchart:ccvilar}. The network is depicted in Figure \ref{figure:ccvilar}.}
\label{rxnchart:ccvilar}
\end{table}

\begin{figure}[h!]
\begin{center}
\frame{\includegraphics[width=0.94\textwidth]{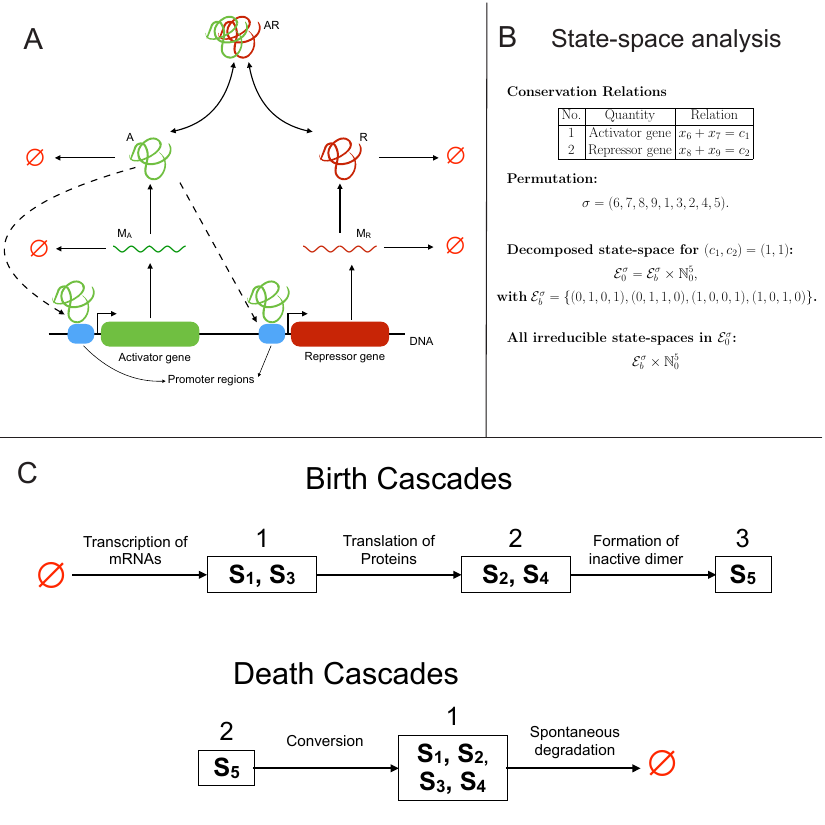}}
\end{center}
\caption{Results for the Circadian clock model (Vilar et al. \cite{Vilar}) depicted in panel {\bf A}. The results of state-space analysis are presented in panel {\bf B} and the birth-death cascades are shown in panel {\bf C}. The birth-cascades signify various stages in the network. Note that in this network there are four \emph{bounded} species ($\sigma(1),\dots,\sigma(4)$) and five \emph{free} species ($\sigma(5),\dots,\sigma(9)$). }
\label{figure:ccvilar}
\end{figure}

Next we examine the detailed mammalian circadian clock given in \cite{Leloup2003} (see Figure \ref{figure:ccleloup}{\bf A}). It consists of three gene-expression modules corresponding to \emph{Per}, \emph{Cry} and \emph{Bmal1} genes. The proteins created by these genes participate in complex regulatory steps through various mechanisms such as formation of dimeric complexes, translocation in an out of nucleus and undergoing phosphorylation-dephosphorylation cycles. Overall the network consists of $16$ species (see the Species Chart in Table \ref{secchart:ccleloup}) and $52$ reactions (see Table \ref{rxnchart:ccleloup}). Interestingly there are no conservation relations for this network and so all the species are \emph{free} and Theorem \ref{maintheorem} proves that $\N^{16}_0$ is the unique irreducible state-space (see Figure \ref{figure:ccleloup}). The birth and death cascades, displayed in Figure \ref{figure:ccleloup}{\bf C}, signify the various network stages as before.

\begin{table}[h!]
\begin{align*}
\begin{array}{|l|l|} \hline
\multicolumn{2}{|c|}{\textnormal{Species Chart} } \\ \hline
{\bf S}_1 = M_P &  {\bf S}_9 = PC_N \\
{\bf S}_2 = M_C & {\bf S}_{10} = PC_{CP} \\
{\bf S}_3 = M_B & {\bf S}_{11} = PC_{NP} \\
{\bf S}_4 = P_C & {\bf S}_{12} = B_C \\
{\bf S}_5 = C_C & {\bf S}_{13} = B_{CP} \\
{\bf S}_6 = P_{CP} & {\bf S}_{14} = B_N \\
{\bf S}_7 = C_{CP} & {\bf S}_{15} = B_{NP} \\
{\bf S}_8 = PC_C & {\bf S}_{16} = I_N \\ \hline
\end{array}
\end{align*}
\caption{Species chart for the second Circadian clock model (Leloup and Goldbeter \cite{Leloup2003}). $P$, $C$ and $B$ denote the \emph{Per}, \emph{Cry} and \emph{Bmal1} proteins. The dimeric complex between two proteins $X$ and $Y$ is denoted by $XY$. The subscripts $C$, $N$ and $P$ stand for ``Cytosol", ``Nucleus" and ``Phosphorylated" respectively. $M_X$ denotes the mRNA for protein $X$ and $I_N$ is an inactive trimer in the Nucleus.}
\label{secchart:ccleloup}
\end{table}

  \begin{table}[h!]
 \begin{center}
   \begin{align*}
\begin{array}{|c|l|c|l|}\hline 
\textnormal{No.} & \textnormal{Reaction} & \textnormal{No.}  & \textnormal{Reaction} \\ \hline
1 &  \emptyset \longrightarrow {\bf S}_1    & 27 &  {\bf S}_8 \longrightarrow {\bf S}_9    \\
2 & {\bf S}_1 \longrightarrow  \emptyset   & 28 & {\bf S}_8  \longrightarrow  \emptyset  \\
3 & {\bf S}_1 \longrightarrow  \emptyset   & 29 & {\bf S}_9 \longrightarrow {\bf S}_{11}  \\
4 &  \emptyset \longrightarrow {\bf S}_2    & 30 & {\bf S}_{11} \longrightarrow {\bf S}_9  \\
5 & {\bf S}_2 \longrightarrow  \emptyset   & 31 & {\bf S}_9 + {\bf S}_{14} \longrightarrow {\bf S}_{16}  \\
6 & {\bf S}_2 \longrightarrow  \emptyset   & 32 & {\bf S}_{16} \longrightarrow {\bf S}_9 + {\bf S}_{14}  \\
7 &  \emptyset \longrightarrow {\bf S}_3    & 33 & {\bf S}_9 \longrightarrow  \emptyset  \\
8 & {\bf S}_3 \longrightarrow  \emptyset    & 34 & {\bf S}_{10} \longrightarrow  \emptyset  \\
9 & {\bf S}_3 \longrightarrow  \emptyset    & 35 & {\bf S}_{10} \longrightarrow  \emptyset  \\
10 & {\bf S}_1 \longrightarrow {\bf S}_1 + {\bf S}_4    & 36 & {\bf S}_{11} \longrightarrow  \emptyset  \\
11 & {\bf S}_4 \longrightarrow {\bf S}_6    & 37 & {\bf S}_{11} \longrightarrow  \emptyset  \\
12 & {\bf S}_6 \longrightarrow {\bf S}_4    & 38 & {\bf S}_3 \longrightarrow {\bf S}_{12}  \\
13 & {\bf S}_8 \longrightarrow {\bf S}_4 +{\bf S}_5    & 39 & {\bf S}_{12} \longrightarrow {\bf S}_{13}  \\
14 & {\bf S}_4 +{\bf S}_5 \longrightarrow {\bf S}_8    & 40 & {\bf S}_{13} \longrightarrow {\bf S}_{12}  \\
15 & {\bf S}_4 \longrightarrow \emptyset   & 41 & {\bf S}_{12} \longrightarrow {\bf S}_{14}  \\
16 & {\bf S}_2  \longrightarrow {\bf S}_2 + {\bf S}_5    & 42 & {\bf S}_{14} \longrightarrow {\bf S}_{12}  \\
17 & {\bf S}_7 \longrightarrow {\bf S}_5    & 43 & {\bf S}_{12} \longrightarrow  \emptyset  \\
18 & {\bf S}_5 \longrightarrow {\bf S}_7    & 44 & {\bf S}_{13} \longrightarrow  \emptyset  \\
19 & {\bf S}_5 \longrightarrow  \emptyset    & 45 & {\bf S}_{13}\longrightarrow  \emptyset  \\
20 &{\bf S}_6 \longrightarrow  \emptyset   & 46 & {\bf S}_{14} \longrightarrow {\bf S}_{15}  \\
21 &   {\bf S}_6 \longrightarrow  \emptyset  & 47 & {\bf S}_{15} \longrightarrow {\bf S}_{14}  \\
22 & {\bf S}_7 \longrightarrow  \emptyset   & 48 & {\bf S}_{14} \longrightarrow  \emptyset  \\
23 & {\bf S}_7 \longrightarrow  \emptyset     & 49 & {\bf S}_{15} \longrightarrow  \emptyset  \\
24 & {\bf S}_8 \longrightarrow {\bf S}_{10}     &  50 & {\bf S}_{15} \longrightarrow  \emptyset  \\
25 & {\bf S}_{10} \longrightarrow {\bf S}_8    & 51 & {\bf S}_{16} \longrightarrow  \emptyset  \\
26 &  {\bf S}_9 \longrightarrow {\bf S}_8   & 52 & {\bf S}_{16} \longrightarrow  \emptyset \\  \hline
\end{array}
\end{align*}
\end{center}
\caption{Reactions for the second Circadian clock model (Leloup and Goldbeter \cite{Leloup2003}). These reactions follow the Species Chart in Table \ref{secchart:ccleloup}. The network is depicted in Figure \ref{figure:ccleloup}.}
\label{rxnchart:ccleloup}
\end{table}

\begin{figure}[h!]
\centering
\frame{\includegraphics[width=0.98\textwidth]{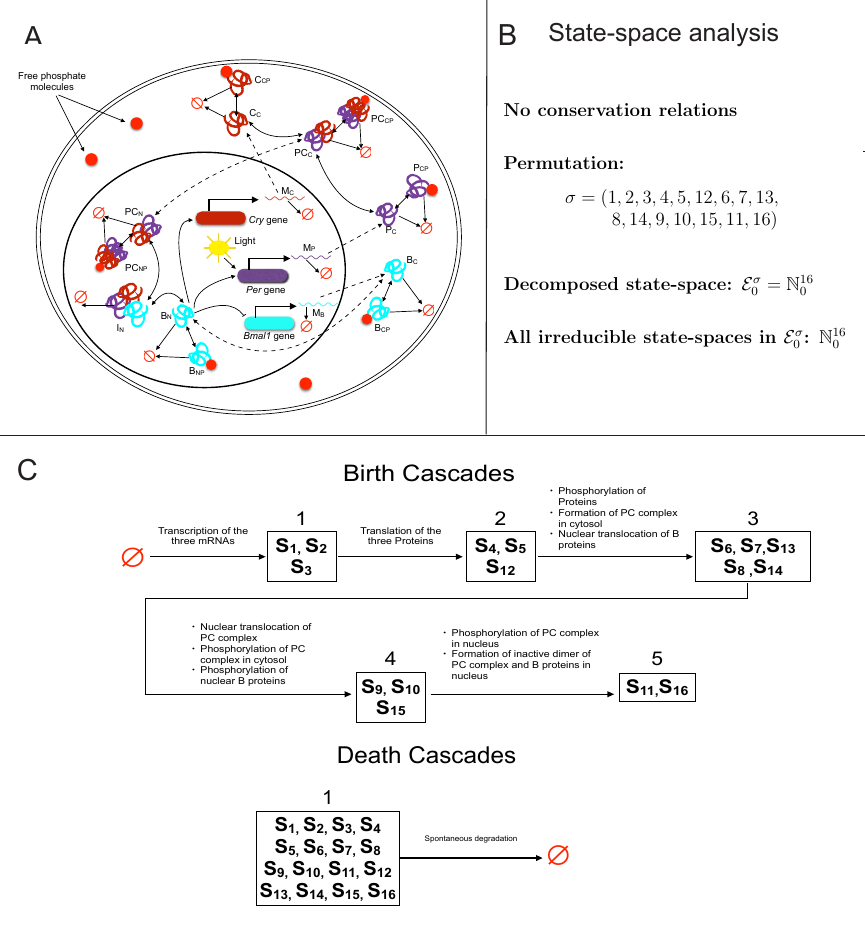}}
\caption{Results for the Circadian clock model (Leloup and Goldbeter \cite{Leloup2003}) depicted in panel {\bf A}. The results of state-space analysis are presented in panel {\bf B} and the birth-death cascades are shown in panel {\bf C}. The birth-cascades signify various stages in the network. Note that in this network all the species are \emph{free} as there are no conservation relations. }
\label{figure:ccleloup}
\end{figure}

\subsection{Bacterial Heat-Shock response model} \label{ex:hsr}

We now analyze the bacterial heat-shock response model developed in Kurata et al. \cite{KurataHS}. It is known that when bacterial cells are exposed to thermal shocks, certain cellular structures are damaged, causing some proteins to denature or unfold. Accumulation of these denatured proteins within cells can disrupt their normal functioning and hence a regulatory mechanism has evolved to detect and repair unfolded proteins. The model in \cite{KurataHS} describes this mechanism in bacterium \emph{E. Coli} and it consists of $5$ distinct gene-expression modules responsible for creating various heat-shock specific proteins ($hsps$), such as the heat-shock transcription factor $\sigma^{32}$ factor, proteases like \emph{HslVu, FtsH} etc. and a molecular chaperon \emph{DnaK}, which assists in the refolding of denatured proteins. The sigma factor $\sigma^{32}$ competes with the dominant sigma factor $\sigma^{70}$ to bind to the \emph{RNA Polymerase} (RNAP), and when it is able to bind, it can initiate the transcription of the heat-shock gene $ph$, which then produces the other heat-shock proteins. The sigma factor $\sigma^{32}$ is itself produced by the gene $pg$ when RNAP is bound to sigma factor $\sigma^{70}$. As explained in \cite{KurataHS} the heat-shock proteins interact in complex ways to realize both feedback and feedforward control that confers several performance advantages to the heat-shock response mechanism.
                            
The network underlying the model in \cite{KurataHS} is shown in Figure \ref{figure:heatshock}{\bf A} and it is fairly large, with $28$ species (see the Species Chart in Table \ref{secchart:heatshock}) and $61$ reactions (see Table \ref{rxnchart:heatshock}). The network contains $5$ semi-positive conservation relations corresponding to various conserved quantities (see Figure \ref{figure:heatshock}{\bf B}). Due to these relations $12$ species are \emph{bounded} while the rest are \emph{free}. Using our analysis we can the find the decomposed state-space for this network and verify the existence of a unique irreducible state-space (see Figure \ref{figure:heatshock}{\bf B}). Note that in this example, the irreducible state-space is a strict subset of the full state-space and in particular, the last two components of each state in this irreducible state-space are zeros. These two components being zero implies that the copy-number of unfolded protein molecules is zero. As these unfolded protein molecules can only \emph{leave} the network by converting to a properly folded protein (via reaction 25 in Table \ref{rxnchart:heatshock}), we can conclude that the heat-shock response network is such, that despite having noisy, stochastic dynamics, all the unfolded protein molecules will eventually get folded, irrespective of the initial count of these unfolded protein molecules. We base this assertion on the fact that starting from any initial state in the state-space, the Markovian reaction dynamics will eventually get trapped inside the unique irreducible state-space. What makes this assertion powerful is that it holds \emph{just} due to the structure of the network, regardless of the values of the rate constants of the reactions. Also for this network we are in the situation of Remark \ref{rem:bctissinglepath}. Hence we can arrange all the \emph{free} species into birth and death cascades which correspond to various network stages (see Figure \ref{figure:heatshock}{\bf {C}}).

\begin{table}[h!]
\begin{align*}
\begin{array}{|l|l|} \hline
\multicolumn{2}{|c|}{\textnormal{Species Chart} } \\ \hline
{\bf S}_1 = \textnormal{RNAP} &  {\bf S}_{15} = \sigma^{32}:\textnormal{DnaK}\\
{\bf S}_2 = \sigma^{70} & {\bf S}_{16} = \textnormal{P}_{ \textnormal{unfolded} } \\
{\bf S}_3 = \sigma^{70}:\textnormal{RNAP} & {\bf S}_{17} = \textnormal{P}_{ \textnormal{unfolded} }:\textnormal{DnaK}  \\
{\bf S}_4 = \sigma^{32}& {\bf S}_{18} = \textnormal{Protease}\\
{\bf S}_5 = \sigma^{32}:\textnormal{RNAP} & {\bf S}_{19} = \sigma^{32}:\textnormal{DnaK}:\textnormal{Protease} \\
{\bf S}_6 = D & {\bf S}_{20} = \textnormal{HslVu} \\
{\bf S}_7 = \textnormal{RNAP}:D& {\bf S}_{21} = \sigma^{32}:\textnormal{HslVu} \\
{\bf S}_8= \sigma^{70}:\textnormal{RNAP}:D& {\bf S}_{22} = \textnormal{mRNA}(\textnormal{DnaK})\\
{\bf S}_9 = \sigma^{32}:\textnormal{RNAP}& {\bf S}_{23} =  \textnormal{mRNA}(\textnormal{Protease})\ \\
{\bf S}_{10} = \textnormal{pg} & {\bf S}_{24} =  \textnormal{mRNA}(\textnormal{HslVu}) \\
{\bf S}_{11} = \textnormal{ph} & {\bf S}_{25} =  \textnormal{mRNA}( \sigma^{32})\\
{\bf S}_{12} = \sigma^{70}:\textnormal{RNAP}:\textnormal{pg} & {\bf S}_{26} = \textnormal{mRNA}(\textnormal{FtsH}) \\
{\bf S}_{13} = \sigma^{32}:\textnormal{RNAP}:\textnormal{ph}& {\bf S}_{27} = \textnormal{FtsH} \\ 
{\bf S}_{14} = \textnormal{DnaK} & {\bf S}_{28} =\sigma^{32}:\textnormal{DnaK}:\textnormal{FtsH}  \\  \hline
\end{array}
\end{align*}
\caption{Species chart for the Bacterial Heat-Shock response model (Kurata et al. \cite{KurataHS}). Here the complex formed by binding two biomolecules $A$ and $B$ is denoted by $A:B$. The main players in this network are the heat-shock proteins \emph{DnaK}, \emph{Protease}, \emph{HslVu} and \emph{FtsH}, and the transcription factor $\sigma^{32}$ which binds to the RNA Polymerase (RNAP) to initiate the production of the heat-shock proteins via the $ph$ gene. This transcription factor $\sigma^{32}$ is itself created by the gene $pg$ when the transcription factor $\sigma^{70}$ is bound to RNAP. Here $D$ denotes the part of DNA that does not include these genes, and $P$ is the protein of interest that needs to be converted from its denatured form $\textnormal{P}_{ \textnormal{unfolded} }$ to its proper form $\textnormal{P}_{ \textnormal{folded} }$. The mRNA for protein $X$ is denoted by $\textnormal{mRNA}(X)$.}
\label{secchart:heatshock}
\end{table}

  \begin{table}[h!]
 \begin{center}
 \begin{align*}
\begin{array}{|c|l|c|l|}\hline 
\textnormal{No.} & \textnormal{Reaction} & \textnormal{No.}  & \textnormal{Reaction} \\ \hline
1 & {\bf S}_1 + {\bf S}_2 \longrightarrow {\bf S}_3    & 32 & {\bf S}_{18}     \longrightarrow  \emptyset  \\
2 &  {\bf S}_3  \longrightarrow   {\bf S}_1 + {\bf S}_2 & 33 & {\bf S}_{19}   \longrightarrow  {\bf S}_{15} \\
3 &  {\bf S}_4 + {\bf S}_1 \longrightarrow  {\bf S}_5   & 34 & {\bf S}_{12}   \longrightarrow  {\bf S}_{25}+ {\bf S}_{12} \\
4 & {\bf S}_5   \longrightarrow {\bf S}_4 + {\bf S}_1 & 35 & {\bf S}_{25}    \longrightarrow   \emptyset   \\
5 & {\bf S}_1 + {\bf S}_6   \longrightarrow {\bf S}_7 & 36 & {\bf S}_{25}    \longrightarrow  {\bf S}_{4}+ {\bf S}_{25} \\
6 &  {\bf S}_7  \longrightarrow {\bf S}_1 + {\bf S}_6 & 37 & {\bf S}_{4}    \longrightarrow  \emptyset  \\
7 &  {\bf S}_4 + {\bf S}_{14} \longrightarrow {\bf S}_{15}   & 38 & {\bf S}_{19}    \longrightarrow  {\bf S}_{18}+ {\bf S}_{14} \\
8 &  {\bf S}_{15}  \longrightarrow {\bf S}_4 + {\bf S}_{14} & 39 & {\bf S}_{21}     \longrightarrow  {\bf S}_{20} \\
9 & {\bf S}_{16} + {\bf S}_{14}  \longrightarrow {\bf S}_{17} & 40 & {\bf S}_{28}   \longrightarrow  {\bf S}_{14}+ {\bf S}_{27} \\
10 & {\bf S}_{17} \longrightarrow {\bf S}_{16} + {\bf S}_{14} & 41 & {\bf S}_{13}    \longrightarrow  {\bf S}_{13}+ {\bf S}_{24} \\
11 & {\bf S}_{3} + {\bf S}_{6}  \longrightarrow {\bf S}_8 & 42 & {\bf S}_{24}   \longrightarrow  \emptyset  \\
12 & {\bf S}_8    \longrightarrow  {\bf S}_{3} + {\bf S}_{6} & 43 & {\bf S}_{24}    \longrightarrow  {\bf S}_{20}+ {\bf S}_{24} \\
13 & {\bf S}_5+ {\bf S}_{6}      \longrightarrow  {\bf S}_{9}  & 44 & {\bf S}_{20}    \longrightarrow \emptyset  \\
14 & {\bf S}_{9}    \longrightarrow   {\bf S}_5+ {\bf S}_{6}& 45 & {\bf S}_{21}    \longrightarrow  {\bf S}_{4} \\
15 & {\bf S}_{3}+ {\bf S}_{10}    \longrightarrow  {\bf S}_{12} & 46 & {\bf S}_{4}+ {\bf S}_{20}    \longrightarrow  {\bf S}_{21}\\
16 & {\bf S}_{12}     \longrightarrow  {\bf S}_{3}+ {\bf S}_{10} & 47 & {\bf S}_{21}    \longrightarrow  {\bf S}_{4}+ {\bf S}_{20} \\
17 &{\bf S}_{5}+ {\bf S}_{11}    \longrightarrow  {\bf S}_{13} & 48 & {\bf S}_{13}    \longrightarrow  {\bf S}_{13}+ {\bf S}_{26} \\
18 & {\bf S}_{13}   \longrightarrow  {\bf S}_{5}+ {\bf S}_{11} & 49 & {\bf S}_{26}  \longrightarrow  \emptyset \\
19 & {\bf S}_{15}+ {\bf S}_{18}    \longrightarrow  {\bf S}_{19}& 50 & {\bf S}_{26}   \longrightarrow  {\bf S}_{26}+ {\bf S}_{27} \\
20 & {\bf S}_{19}   \longrightarrow  {\bf S}_{15}+ {\bf S}_{18} & 51 & {\bf S}_{27}    \longrightarrow  \emptyset \\
21 & {\bf S}_{13}    \longrightarrow  {\bf S}_{13}+ {\bf S}_{22} & 52 & {\bf S}_{28}   \longrightarrow  {\bf S}_{15}  \\
22 & {\bf S}_{22}    \longrightarrow  \emptyset & 53 & {\bf S}_{15}+ {\bf S}_{27}    \longrightarrow  {\bf S}_{28} \\
23 &{\bf S}_{22}    \longrightarrow  {\bf S}_{14}+ {\bf S}_{22} & 54 & {\bf S}_{28}    \longrightarrow  {\bf S}_{15}+ {\bf S}_{27} \\
24 & {\bf S}_{14}    \longrightarrow \emptyset & 55 &  {\bf S}_{5}    \longrightarrow  {\bf S}_{1} \\
25 & {\bf S}_{17}    \longrightarrow  {\bf S}_{14} +   P_{ \textnormal{folded} } & 56 &  {\bf S}_{13}    \longrightarrow  {\bf S}_{1}+ {\bf S}_{11} \\
26 & {\bf S}_{15}    \longrightarrow  {\bf S}_{4} & 57 &  {\bf S}_{9}    \longrightarrow   {\bf S}_{7} \\
27 & {\bf S}_{19}    \longrightarrow  {\bf S}_{4}+ {\bf S}_{18} & 58 & {\bf S}_{15}    \longrightarrow  {\bf S}_{14} \\
28 &{\bf S}_{28}   \longrightarrow  {\bf S}_{4}+ {\bf S}_{27} & 59 &  {\bf S}_{19}    \longrightarrow  {\bf S}_{14}+ {\bf S}_{18} \\
29 & {\bf S}_{13}    \longrightarrow  {\bf S}_{13}+ {\bf S}_{23}& 60 &  {\bf S}_{28}    \longrightarrow  {\bf S}_{14}+ {\bf S}_{27} \\
30 & {\bf S}_{23}   \longrightarrow \emptyset& 61 &  {\bf S}_{21}    \longrightarrow   {\bf S}_{20} \\
31 & {\bf S}_{23}   \longrightarrow  {\bf S}_{18}+ {\bf S}_{23}&  &  \\ \hline
\end{array}
\end{align*} 
\end{center}
\caption{Reactions for the Bacterial Heat-Shock response model (Kurata et al. \cite{KurataHS}). These reactions follow the Species Chart in Table \ref{secchart:heatshock}. The network is depicted in Figure \ref{figure:heatshock}{\bf A}. Reaction 25 represents the refolding of denatured protein $P$ when it is in complex with the chaperon protein \emph{DnaK}. This reaction is the only way in which molecules of protein $P$ exit the network.}
\label{rxnchart:heatshock}
\end{table}

\begin{figure}[h!]
\centering
\frame{\includegraphics[width=0.98\textwidth]{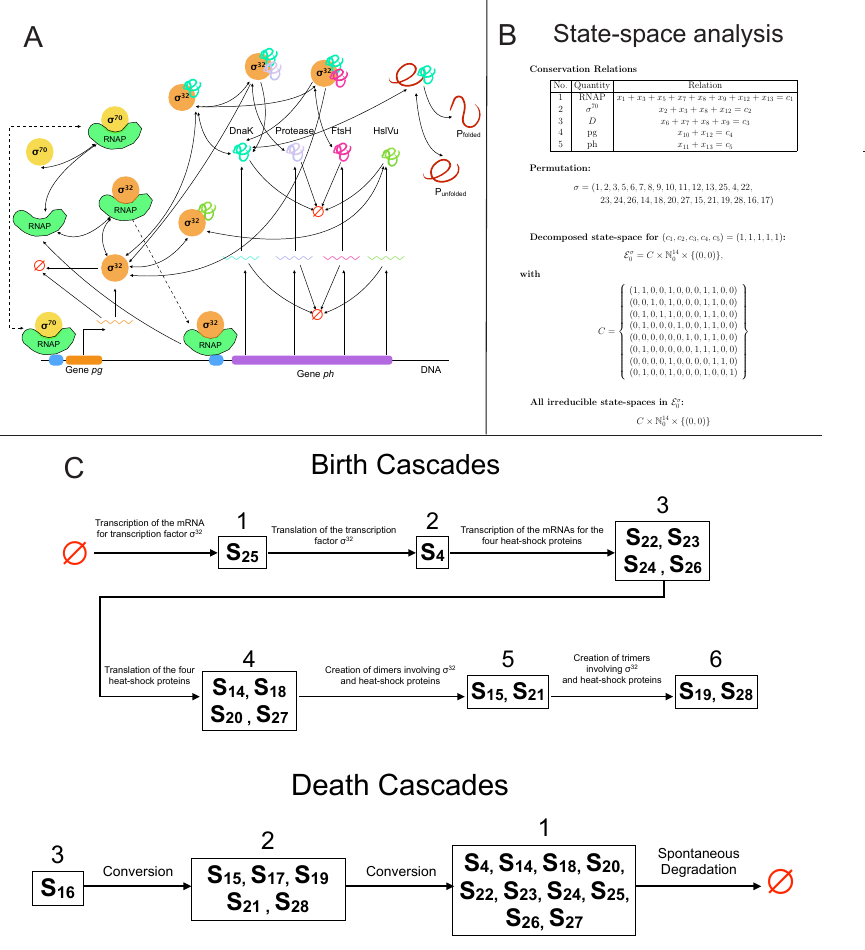}}
\caption{Results for the Bacterial Heat-Shock response model (Kurata et al. \cite{KurataHS}) depicted in panel {\bf A}. The mRNAs are not labeled to avoid clutter. The results of state-space analysis are presented in panel {\bf B} and the birth-death cascades are shown in panel {\bf C}. The birth-cascades signify various stages in the network. Note that in this network there are twelve \emph{bounded} species ($\sigma(1),\dots,\sigma(12)$) and fourteen \emph{free} species ($\sigma(13),\dots,\sigma(26)$). }
\label{figure:heatshock}
\end{figure}

\subsection{A simple Toxin-Antitoxin network} \label{ex:toxin}

Many intracellular networks have reactions taking place on many different time-scales \cite{Kang,weinan2}. It is well known that direct stochastic simulations of such networks, using Gillespie's SSA \cite{GP}, is computationally infeasible because most of the resources are spent on simulating the ``fast" reactions which are generally less important than the ``slow" ones \cite{ssSSA}. To remedy this problem, approximate simulation approaches have been developed that \emph{separate the time-scales} by applying the quasi-stationary assumption (QSA) on the fast subcomponents of the network \cite{Kang}. Under this assumption, the slow reactions are ``switched-off" and the dynamics of the slow species involved in these reactions is ``frozen in time", while the stable Markovian dynamics of the fast subnetwork relaxes to a unique stationary distribution (see \eqref{ergodicconvergenceoff}). Using this stationary distribution the propensities of the slow reactions can be estimated and the next slow reaction can be sampled. Applying QSA between every consecutive slow reaction events allows one to approximate the original dynamics without having the simulate the fast reactions, thereby reducing the overall computational effort drastically.

For a successful application of QSA it is imperative to ascertain the stability or ergodicity of the fast subnetwork in \emph{real-time} (i.e. during the simulation run), as the set of available fast reactions can depend on the current state of the slow species. Moreover for efficiently estimating the slow reaction propensities it is useful to determine the exact time-varying support-sets of the stationary distributions. These tasks can be \emph{automatically} accomplished by integrating our computational procedures for state-space analysis, with any QSA-based simulation algorithm like the \emph{slow-scale} SSA or ssSSA \cite{ssSSA} or the Nested SSA \cite{weinan1,weinan2}. We illustrate this next using a simple Toxin-Antitoxin network module which is found in many bacterial cells and is believed to lead to the formation of slow-growing \emph{persister} cells that exhibit multi-drug tolerance. This example shows how \emph{restricted} species arise naturally when we restrict our attention to a subnetwork within the larger network. Moreover this example also highlights that the flexibility in the classification of \emph{unbounded} species as \emph{free} or \emph{restricted} (see Remark \ref{rem:flixibilityofI}) can be really important for applications. In fact for this example, this classification will change randomly with time depending on the states visited by a stochastic trajectory.

The simple Toxin-Antitoxin network we consider is depicted in Figure \ref{figure:toxin} and it is based on the more detailed model given in \cite{ToxinAntitoxinGelens}. It consists of a single DNA strand containing genes for both Toxin $T$ and Antitoxin $A$ protein molecules. Both these proteins are translated by a common \emph{biscistronic} mRNA $M$ and they both annihilate each other in the sense that they bind to form an inactive complex $AT$ which does not participate in the dynamics. The Antitoxin molecules directly inhibit the transcription of mRNA and the Toxin molecules convert to a protein $P$ that interferes with bacterial metabolism and harms the cells in various ways \cite{ToxinAntitoxinSavageau}. Following the Species Chart in Table \ref{secchart:toxinantitoxin}, we can describe our simple Toxin-Antitoxin system as a network with $8$ reactions which are listed along with their propensity functions in Table \ref{rxnchart:toxinantitoxin}. The choice of rate constants in these propensity functions is arbitrary but reflective of the values found in the literature \cite{ToxinAntitoxinGelens}. From these rates it can be inferred that two reactions can be considered fast, namely, the translation of mRNA and the mutual annihilation of Toxin and Antitoxin proteins (see Figure \ref{figure:toxin}{\bf A}).

\begin{table}[h!]
\begin{align*}
\begin{array}{|l|l|} \hline
\multicolumn{2}{|c|}{\textnormal{Species Chart} } \\ \hline
{\bf S}_1 = M &  {\bf S}_{3} = A \\
{\bf S}_2 = T & {\bf S}_{4} = P  \\  \hline
\end{array}
\end{align*}
\caption{Species chart for the Toxin-Antitoxin model. $T$ and $A$ denote the Toxin and Antitoxin proteins respectively. They are both translated by the \emph{biscistronic} mRNA $M$ which is produced constitutively by a gene. The Toxin protein converts to another protein $P$ which inhibits the metabolism of the cell. 
}
\label{secchart:toxinantitoxin}
\end{table}

\begin{table}[h!]
\begin{align*}
\begin{array}{|c|l|c|}\hline 
\textnormal{No.} & \textnormal{Reaction} &  \textnormal{Propensity} \\ \hline
1 & \emptyset \longrightarrow  {\bf S}_1  &  \lambda_1(x) = \left( \frac{20}{1 + x_3} \right) \\ 
2 &{\bf S}_1  \longrightarrow  {\bf S}_1 + {\bf S}_2 + {\bf S}_3 &  \lambda_2 (x) = \theta_1 x_1 \\
 3 &{\bf S}_2 + {\bf S}_3  \longrightarrow  \emptyset &  \lambda_3 (x) = \theta_2 x_2 x_3 \\
 4 &{\bf S}_1  \longrightarrow  \emptyset  &  \lambda_4 (x) = 2 x_1 \\
 5 &{\bf S}_2  \longrightarrow  \emptyset &  \lambda_5 (x) = 5 x_2\\
 6 &{\bf S}_3  \longrightarrow \emptyset &  \lambda_6(x) =x_3 \\
  7 &{\bf S}_2  \longrightarrow  {\bf S}_4&  \lambda_7 (x) = 0.5 x_2\\
 8 &{\bf S}_4  \longrightarrow  \emptyset&  \lambda_8 (x) = 0.1x_4 \\
\hline
\end{array}
\end{align*}
\caption{Reactions for the Toxin-Antitoxin model according to the Species Chart in Table \ref{secchart:toxinantitoxin}. The associated propensity functions $\lambda_k$-s are also provided. Here $x_i$ denotes the copy-number of species ${\bf S}_i$. We choose $\theta_1 = 100$ and $\theta_2= 10$, and hence reactions $2$ and $3$ are much \emph{faster} in comparison to the other reactions. Reaction $3$ is the annihilation reaction between Toxin and Antitoxin molecules, which actually represents formation of an inactive complex $AT$. In reactions $2$ and $3$, we choose parameters $\theta_1 =100$ and $\theta =10$, and so the subnetwork formed by these two reactions can be considered \emph{fast} in the context of the larger network.
}
\label{rxnchart:toxinantitoxin}
\end{table}

\begin{figure}[h!]
\centering
\frame{\includegraphics[width=0.93\textwidth]{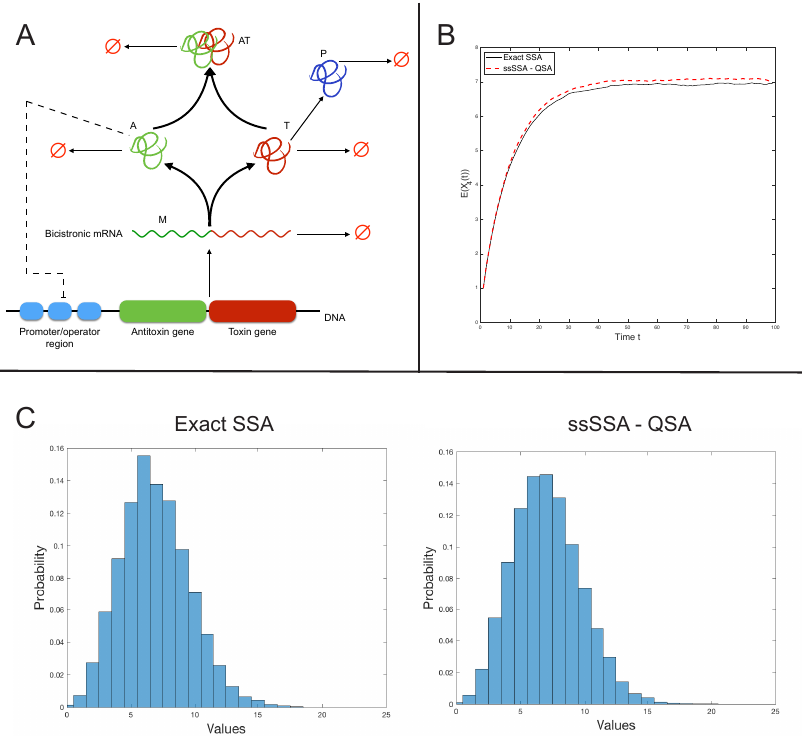}}
\caption{ Stochastic simulations for the Toxin-Antitoxin model depicted in panel {\bf A}. The \emph{fast} reactions are indicated with a thicker arrow. In panel {\bf B} the estimated mean-dynamics $\E(X_4(t))$ for copy-numbers of protein $P$ is plotted, and in panel {\bf C}, the estimated probability distributions (histograms) for these copy-numbers at the final time-point $T=100$ are shown. All these estimations were performed with two simulation schemes - the Exact SSA \cite{GP} and the approximate ssSSA \cite{ssSSA} that uses the quasi-stationary assumption (QSA). Observe that ssSSA is quite accurate and for this example, the simulations using ssSSA were about $6$ times faster than those using the Exact SSA.
}
\label{figure:toxin}
\end{figure}

Suppose one is interested in simulating the stochastic model for this network in the time period $[0,T]$ for $T =100$. Performing exact stochastic simulations is cumbersome due to the presence of fast reactions, but an approximate QSA-based algorithm can be used for carrying out these simulations with far lesser computational effort. To apply QSA, we consider the dynamics of the component of the network consisting of two fast reactions                                                                                  
\begin{align}
\label{defn:tox_antitox}
\mathbf{S}_1 \longrightarrow \mathbf{S}_1 + \mathbf{S}_2 + \mathbf{S}_3 \quad  \textnormal{and} \quad  \mathbf{S}_2 + \mathbf{S}_3 \longrightarrow \emptyset,
\end{align}
assuming that all the other \emph{slow} reactions are switched off and the state of the slow variables is fixed. Following \cite{Kang}, the slow variables in this network are $x_1$ (mRNA copy-number) and $y = x_2 - x_3$ (difference between Toxin and Antotoxin copy-numbers), because these variables are unaffected by the fast reactions in \eqref{defn:tox_antitox}. Fixing $x_1$ and $y$ we compute the decomposed state-space for this subnetwork using Algorithm \ref{finddecompss}. Due to the semi-positive conservation relation $\gamma_1 = (1,0,0)$, the species $\mathbf{S}_1$ is \emph{bounded} and its finite state-space is simply the singleton $\{x_1 \}$. The only other independent conservation relation is $\gamma_2 = (0,1,-1)$, which is mixed-sign, and so there exists a \emph{restricted} species. Algorithm \ref{finddecompss} will automatically classify one of the remaining species as \emph{free} and the other as \emph{restricted}, depending on the value of $y$, in such a way that the associated affine map $\phi$ is compatible with the network (recall Assumption \ref{affinefunction}). If $y \geq 0$, then Algorithm \ref{finddecompss} picks species $\mathbf{S}_3$ as \emph{free}, species $\mathbf{S}_2$ as \emph{restricted}, the affine map $\phi$ as $\phi(x_2) = x_3 + y$ and the permutation $\sigma$ as $\sigma = (1,3,2)$. However these choices violate network compatibility when $y< 0$ and so in this situation Algorithm \ref{finddecompss} picks species $\mathbf{S}_2$ is \emph{free}, species $\mathbf{S}_3$ is \emph{restricted}, the affine map $\phi$ as $\phi(x_3) = x_2 - y$ and the permutation $\sigma$ as $\sigma = (1,2,3)$. For convenience, let $\mathbf{S}_y$, $\phi_y$ and $\sigma_y$ denote the $y$-dependent choices of the \emph{free} species, the affine map and the permutation $\sigma$ respectively. Then according to Algorithm \ref{finddecompss}, the decomposed state-space for the fast subnetwork permuted with $\sigma_y$ is simply $\{x_1\} \times \Phi_y$, where $\Phi_y$ is the graph (see \eqref{defngraphphi}) of map $\phi_y$. 

As described in Section \ref{sec:netreductionbyelimination} we can reduce the fast subnetwork \eqref{defn:tox_antitox} by eliminating the \emph{restricted} species to obtain the following network 
\begin{align}
\label{defn:tox_antitox2}
\mathbf{S}_1 \longrightarrow \mathbf{S}_1 + \mathbf{S}_y \quad  \textnormal{and} \quad  \mathbf{S}_y \longrightarrow \emptyset,
\end{align}
where the propensity function for the first reaction is same as before, but for the second reaction it changes to
\begin{align}
\label{toxin:newprop}
\tilde{ \lambda }_y (x_1,z) = \theta_2 z \phi_y(z) = \theta_2 z (z + |y|).
\end{align}
Here $z$ is the copy-number of the \emph{free} species $\mathbf{S}_y$. Observe that the compatibility of map $\phi_y$ ensures that $\tilde{ \lambda }_y(x_1,z) = 0$ if and only if $z = 0$, and hence the reduced network satisfies Assumption \ref{massacttype}. This allows us to apply Algorithm \ref{findrednet} to find all the irreducible state-spaces for this reduced network in the state-space $\{x_1\} \times \N_0$ and this algorithm outputs that there is a single irreducible state-space which is identical to the full state-space $\{x_1\} \times \N_0$. Therefore the stationary distribution for this network must be exactly supported on $\{x_1\} \times \N_0$, and in fact using the results in \cite{ACKProd} (see also Section \ref{ex:productdistr}) we can compute it to be exactly:
\begin{align}
\label{sddistreducednetworkexampletoxi}
\tilde{\pi}_y(x_1,z) = \left\{   
\begin{array}{cl}
1 & \textnormal{ if }  x_1 = 0 \textnormal{ and } z = 0\\
\frac{1}{L} \left(\frac{\theta_1}{  \theta_2}  \right)^z   x^z_1  \left( \frac{ |y| ! }{ z! (z+ |y|)!  } \right)   & \textnormal{ if }  x_1 > 0 \textnormal{ and } z = 0,1,2,\dots\\
0 & \textnormal{ otherwise},
\end{array}
\right.
\end{align}
where $a!$ denotes the factorial of $a$ and $L$ is the normalization constant given by
\begin{align*}
L=  \sum_{z = 0}^\infty  \left(\frac{\theta_1 }{  \theta_2}  \right)^z  x^z_1 \left( \frac{ |y| ! }{ z! (z+ |y|)!  } \right).
\end{align*}
It is easy to check that $L < \infty$ and hence $\tilde{\pi}_y$ is a valid stationary distribution over $\{x_1\} \times \N_0$. The uniqueness of this stationary distribution is guaranteed because $\{x_1\} \times \N_0$ is the only irreducible state-space for the reduced network. Due to Proposition \ref{mainprop_reducednetwork} we can conclude that $\{x_1\} \times \Phi_y$ is the only irreducible state-space for the full subnetwork \eqref{defn:tox_antitox} under permutation $\sigma_y$ and its unique stationary distribution $\pi_y$ on $\{x_1\} \times \Phi_y$ is just
\begin{align*}
\pi_y(x_1,z, \phi_y(z)) =  \tilde{\pi}_y(x_1,z). 
\end{align*}
This also shows that the subnetwork \eqref{defn:tox_antitox} is ergodic which is necessary for the application of QSA. For the \emph{slow} reactions $k \in \{1,4,5,6,7,8\}$ the propensities needed for the application of QSA can be estimated as
\begin{align*}
\hat{ \lambda }_k(x_1,y,x_4) = \sum_{z \in \N_0} \lambda_k(x_1,z,\phi_y(z),x_2)   \tilde{\pi}_y(x_1,z).
\end{align*}

Using these propensities we can simulate the Toxin-Antitoxin network with ssSSA \cite{ssSSA} to estimate the probability distribution of copy-numbers of protein $P$ (i.e. $X_4(t)$) as well as the mean dynamics of these copy-numbers. The results are reported in Figure \ref{figure:toxin}, where results from the exact SSA simulations are also shown for comparison. One can see that ssSSA simulations are quite accurate thanks to the correct identification of the ergodic subnetwork and its stationary distribution. Moreover these ssSSA simulations are about $6$ times faster than simulations with Exact SSA. This simple example nicely illustrates how our algorithms for state-space analysis can be integrated with a simulation scheme like ssSSA, to aid the application of QSA. Note that in more complicated examples, the exact form of the stationary distribution may not be available, but our analysis can verify its uniqueness and provide a description of its exact support. In such cases this knowledge can be combined with other simulation-based schemes to sample from the stationary distributions and estimate the slow propensities for applying QSA (see \cite{weinan1,weinan2}).

\subsection{Networks with product-form stationary distributions} \label{ex:productdistr}

In this paper we present a method for finding irreducible state-spaces for networks where infinitely many states are accessible. On each of these irreducible state-spaces, the uniqueness of the stationary distribution is guaranteed but its existence needs to be checked by other means (like the analysis in \cite{GuptaPLOS}). This is complementary to certain other results in the literature which assume the knowledge of irreducible state-spaces and demonstrate the existence of product-form stationary distributions for a large class of networks \cite{ACKProd}. Exploiting this complementarity, we now explore how the combination of our results with the results on product-form stationary distributions, can provide us with a complete characterization of the simplex of stationary distributions (see \eqref{stat_simplex}) for several networks. 

Consider a reaction network $\mathcal{N} = ( \mathcal{V}, \mathcal{O},  \Lambda  )$ with $d$ species and $K$ reactions of the form \eqref{formofthereaction} (see Section \ref{sec:prelim}). For now we assume that each propensity function $\lambda_k$ has the mass-action form \eqref{massactionkinetics} with some rate constant $\theta_k >0$. In the deterministic setting, the state of the network at time $t$ is a vector of species concentrations $x(t) =(x_1(t),\dots,x_d(t)) \in \R^d_+$, which evolves according to the following ODE
\begin{align}
\label{defn_rre}
\frac{dx}{dt} = \sum_{k=1}^K \theta_k  \left( \prod_{i=1}^d x^{\nu_{ik}}_i \right) ( \rho_k - \nu_k ), 
\end{align}
where $\nu_k = (\nu_{1k} ,\dots, \nu_{dk} ) $ and $\rho_k= (\rho_{1k} ,\dots, \rho_{dk} )$ are vectors in $\N^d_0$ denoting the $k$-th column of matrices $ \mathcal{V}$ and $ \mathcal{O}$ respectively. Observe that the $k$-th reaction in the network can simply be represented as $\nu_k \longrightarrow \rho_k$. With this association, $\nu_k$ and $\rho_k$ are called network \emph{complexes}. 
Let $\mathbb{C} = \{ \nu_k,\rho_k : k=1,\dots,K \}$ be the set of all network complexes and suppose there exists a strictly positive vector $r = (r_1,\dots,r_d) \in \R^d_+ $ such that for each $z \in \mathbb{C}$ we have
\begin{align*}
 \sum_{k =1, \nu_k =z}^K  \theta_k  \left( \prod_{i=1}^d r^{\nu_{ik}}_i \right) =  \sum_{k =1, \rho_k =z}^K  \theta_k  \left( \prod_{i=1}^d r^{\nu_{ik}}_i \right). 
\end{align*}
This relation simply says that when the vector of species concentrations is $r$, the rate at which a complex $z$ is consumed (l.h.s.) is same as the rate at which this complex is produced (r.h.s.). It can be shown that $r$ is a fixed point for the deterministic state-dynamics \eqref{defn_rre}, i.e. r.h.s of \eqref{defn_rre} is ${\bf 0}$ when $x =r$. Hence the network is called \emph{complex-balanced} and $r$ is called a \emph{complex-balanced} fixed point. This property has several implications regarding the topology of the network as well as the existence, uniqueness and stability of its fixed points (see \cite{HornJackson,Feinberg1,Feinberg2}). The existence of a complex-balanced equilibrium can be verified computationally, but it can also be checked for many networks using the famous \emph{deficiency zero theorem} (see \cite{Feinberg2}) from Chemical Reaction Network Theory.

In \cite{ACKProd}, Anderson et al. prove that if a network $\mathcal{N}$ with mass-action propensities is complex-balanced in the sense described above, then there exists a \emph{product-form} stationary distribution $\pi$ for its stochastic model, on each irreducible state-space $\mathcal{E}$ within the state-space $\mathcal{E}_0 \subset \N^d_0$ of the network. This stationary distribution is given by                   
\begin{align}
\label{stdis:prodform}
\pi(x) =  \frac{1}{M}  \prod_{i=1}^d \frac{ r^{x_i}_i   }{x_i!}  \quad \textnormal{ for all } \quad x  \in \mathcal{E},
\end{align}
where $a!$ denotes the factorial of $a$, $r$ is the complex-balanced fixed point and $M$ is the normalizing constant given by
\begin{align}
\label{defn_norm_constant}
M  =  \sum_{x  \in \mathcal{E}} \prod_{i=1}^d \frac{ r^{x_i}_i }{x_i!} .
\end{align}
To use this elegant result one needs to know the irreducible state-spaces, which is precisely what is accomplished in this paper. In fact, the examples considered before suggest that for many networks we can provably find all the irreducible state-spaces $\mathcal{E}_1,\dots,\mathcal{E}_Q$ for the network using our main result Theorem \ref{maintheorem}. If this is true, then we can easily compute the corresponding product-from \eqref{stdis:prodform} stationary distributions supported on these classes and obtain the exact simplex $\Sigma$ (see \eqref{stat_simplex}) of stationary distributions for the network.

Note that if the whole nonnegative integer orthant $ \mathcal{E} = \N^d_0$ is an irreducible state-space, then this is the only irreducible state-space, and the simplex $\Sigma$ consists of just one stationary distribution $\pi$ given by \eqref{stdis:prodform} with $\mathcal{E} = \N^d_0$ and $M = \exp( \sum_{i=1}^d r_i )$. In other words, $\pi$ is just a product of Poisson distributions and in such a scenario, the species copy-numbers are independent at stationarity, with the copy-number distribution of species $i$ being Poisson with mean $r_i$.  As the species are constantly interacting through reactions, having this independence is quite remarkable, and it has been argued that this independence could play an important role in metabolic pathways \cite{LevineHwa}.

We now demonstrate how for certain networks, our results can help in accurately computing the stationary networks by replacing a possibly infinite sum in \eqref{defn_norm_constant} with a finite sum, thereby avoiding any truncation errors associated with the problem of estimating infinite sums. Assume that network $\mathcal{N}$ is complex-balanced and it does not have any \emph{restricted} species. Also suppose that using Theorem \ref{maintheorem}, we obtain the irreducible state-space $\mathcal{E} = \mathcal{E}_b \times \N^{ d_1}_0 \times \{0\}$, under some permutation $\sigma$ which we can assume to be identity (i.e. $\sigma(i) = i$ for each $i$), without any loss of generality. Here $\mathcal{E}_b$ is a finite set in $\N^{d_b}_0$, $d_b$ is the number of \emph{bounded} species and $d_1$ is some number less than the number of \emph{free} species $d_f = (d - d_b)$. The decomposed form of $\mathcal{E}$ shows that any element $x  \in \mathcal{E}$ can be expressed as $x = (x_b,x_f,{\bf 0})$, where $x_b = (x_{1} ,\dots, x_{d_b} ) \in \mathcal{E}_b$ and $x_f = (x_{d_b+1} ,\dots, x_{d_b+ d_1} ) \in \N^{ d_1}_0$. Therefore we can write \eqref{defn_norm_constant} as     
\begin{align*}
M  =  \sum_{ (x_b,x_f, {\bf 0})  \in \mathcal{E} }   \prod_{i=1}^d \frac{ r^{x_i}_i }{x_i!} &= \left( \sum_{ x_b  \in \mathcal{E}  }  \prod_{i=1}^{d_b}  \frac{ r_i^{x_i} }{ x_i! } \right) \left( \sum_{ x_f \in \N^{d_1}_0}  \prod_{i=d_b+1}^{d_b+d_1}  \frac{ r_i^{x_i} }{ x_i! }  \right) \\&=  \exp \left( \sum_{i=d_b+1}^{d_b+d_1}  r_{i} \right)   \left( \sum_{ x_b  \in \mathcal{E}  }  \prod_{i=1}^{d_b}  \frac{ r_i^{x_i} }{ x_i! } \right), 
\end{align*}
which is a finite sum, that can be easily computed without incurring any truncation errors. In deriving the last relation, we have used that $\sum_{n=0}^\infty a^n/n! = \exp(a)$ for any $a \in \R$.

Until now we were assuming that propensity functions of the complex-balanced network $\mathcal{N}$ satisfy mass-action kinetics. Theorem 6.1 in \cite{ACKProd} relaxes this assumption of mass-action kinetics and proves the existence of product-form stationary distributions for more general kinetics, where each species $i$ has a ``rate of association" function $\kappa_i : \N_0 \to \R_+$. In this setting, the mass-action formula \eqref{massactionkinetics} for the propensity function changes to
\begin{align*}
\lambda_k(x) = \theta_k \prod_{i=1}^d  \prod_{j=0}^{ \nu_{ik} -1 } \kappa_i(x_i - j)
\end{align*}
and the product-form stationary distribution becomes
\begin{align*}
\pi(x) =  \frac{1}{M} \prod_{i=1}^d    \frac{ r_i^{x_i} }{ \prod_{j=1}^{x_i} \kappa_i(j)  } \quad \textnormal{ for all } \quad x  \in \mathcal{E},
\end{align*}
with the normalizing constant $M$ chosen to ensure that $\sum_{x \in \mathcal{E}} \pi(x) = 1$. Observe that this result can be used to compute the stationary distribution \eqref{sddistreducednetworkexampletoxi} for network \eqref{defn:tox_antitox2} in Section \ref{ex:toxin}. Recently this result has been extended even further to consider more general species-specific association rate functions (see \cite{ACProd2}). Such results along with our computational framework for identifying irreducible state-spaces, provide a way for characterizing the stationary distributions for complex-balanced networks. 

\section{Conclusion} \label{sec:conclusion}

 The aim of this paper is to provide a new tool for analyzing continuous-time Markov chain (CTMC) models of biomolecular reaction networks. Specifically we are interested in situations where the state-space for the CTMC needs to be countably infinite due to the lack of a global conservation relationship among all the species. Such situations arise frequently in Systems Biology as stochastic models generally describe the activity of a small subnetwork embedded within a larger network. We develop a simple procedure to systematically explore the space of conservation relations among species and represent the state-space of the CTMC in a special decomposed-form based on the copy-number ranges of all the species. This form can help in assessing the reachability relations and the communication structures within the infinite state-space of the underlying CTMC. In this context, the main goal of this paper is to construct a computational method for finding all the \emph{closed communication classes} for the CTMC. Such classes are natural attracting sets for the dynamics and they can also be viewed as \emph{irreducible state-spaces} for the CTMC (see Section \ref{subsec:reachability}). Under the existence of a suitable Foster-Lyapunov function (see \cite{GuptaPLOS,Meyn} and Section \ref{sec:intro}), each irreducible state-space supports a unique stationary distribution and these distributions form the vertices of the full simplex (see \eqref{stat_simplex}) of stationary distributions of the CTMC.    
 
As we discuss in this paper, finding irreducible state-spaces for networks where infinitely many states are accessible, is a challenging but a very important problem for several reasons. These reasons include understanding the stability and ergodic properties of networks (see Section \ref{sec:intro}), analyzing network topologies using methods from \emph{Transition Path Theory} \cite{Metzner} and in obtaining the exact stationary distribution for a large class of networks (see Section \ref{ex:productdistr}). Furthermore our computational procedure for finding irreducible states-spaces can assist in an automated discovery of \emph{fast} ergodic subnetworks for the application of \emph{quasi-stationary assumption} (QSA) in the simulation of multiscale reaction networks (see Section \ref{ex:toxin} and \cite{ssSSA,weinan1,weinan2}). Using our procedure we demonstrate in Section \ref{sec:examples} that generally networks from Systems Biology admit a single irreducible state-space which corresponds to the situation where the CTMC describing the reaction dynamics is ergodic and has a unique stationary distribution. We discuss how this information along with the structure of irreducible state-space can sometimes provide valuable biological insights into the design and functionality of the underlying network (see Sections \ref{ex:cc} and \ref{ex:hsr}).

Our approach works by classifying each species as \emph{free}, \emph{bounded} and \emph{restricted}, based on their admissible copy-number ranges. The \emph{bounded} species have a finite copy-number range and their dynamics evolves in a finite set $\mathcal{E}_b$. On the other hand the copy-number range of \emph{free} species is the infinite set of all non-negative integers $\N_0$, and hence their dynamics evolves in an orthant $\N^{d_f}_0$. The \emph{restricted} species imitate the dynamics of \emph{free} species according to some affine function, and they can be removed from the network for the purpose of finding irreducible state-spaces (see Section \ref{sec:netreductionbyelimination}). We develop a computational procedure that can provably locate all the irreducible state-spaces for the underlying CTMC within the infinite state-space $\mathcal{E}_b \times \N^{d_f}_0$. This is accomplished by suitably combining the matrix methods used in the finite state-space case where only \emph{bounded} species are present (see Section \ref{sec:netbounded}), along with the construction of \emph{birth-death cascades} for the \emph{free} species (see Section \ref{sec:freebddspecies}). We demonstrate the versatility of our method through many examples from Systems Biology in Section \ref{sec:examples}. From these examples one can also observe that the birth-death cascades correspond naturally to various important stages in the network and hence this cascade construction process facilitates a better understanding of the network design.

Finally we would like to mention that since our computational procedure only involves basic linear-algebraic tasks (such as matrix computations, solving linear equations and Linear Programs etc.), it can be efficiently applied in very high dimensions. Hence our method can easily handle large reaction networks with several species and reactions. However computational issues may arise if the size $N_b$ of the finite state-space $\mathcal{E}_b$ for \emph{bounded} species becomes too large, as our method requires several computations with matrices of size $N_b \times N_b$. We hope to resolve these issues in a future work.

\section*{Appendix: Proof of Theorem \ref{maintheorem} \label{sec:APPENDIX}}

Let $\mathbb{L}_{ \textnormal{min} }$ be the set of al minimal leaf nodes of the Birth-Cascade Tree (BCT) constructed in Section \ref{sec:freebddspecies}. We pick any $(C,A) \in \mathbb{L}_{ \textnormal{min} }$ and consider the dynamics of network $\tilde{\mathcal{N} }^{\sigma}$ under the permutation $\sigma = \sigma_3(A)$ mentioned in the statement of Theorem \ref{maintheorem}. Consider the directed path \eqref{defn_bct} in BCT, starting with the node $(C_0, \emptyset)$ and terminating at the leaf node $(C,A)$. The next lemma is a simple consequence of the construction of BCT. 
\begin{lemma}
\label{lem:level3} 
Pick any $z_1 \in C_0$, $z_2 \in C$ and $r_0 \in \N^{|A|}_0$. Then there exists a vector $x \in \N^{|A | }_0$ such that $x \geq r_0$ and  
\begin{align*}
 (z_1, {\bf 0} ) \stackrel{ \tilde{\mathcal{N} }^{\sigma}  }{\longrightarrow} ( z_2, x, {\bf 0}).  
 \end{align*}
 Moreover this relation also holds for any $z_1 \in C$. 
 \end{lemma}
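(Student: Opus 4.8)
The plan is to induct on the length $l$ of the directed BCT path \eqref{defn_bct} terminating at the leaf $(C,A)=(C_l,G_l)$. Under the permutation $\sigma=\sigma_3(A)$ (see \eqref{permsigma3}) a state splits as $(z,u,v)$ with $z\in\mathcal E_b^\sigma$, with $u\in\N_0^{|A|}$ the copy-numbers of the free species $\mathcal D_f^\sigma(A)$, and with $v\in\N_0^{d_f-|A|}$ the remaining free species; the relation to be established is $(z_1,\mathbf 0,\mathbf 0)\to(z_2,x,\mathbf 0)$ with $x\ge r_0$. For the base case $l=0$ I would note that $(C,A)=(C_0,\emptyset)$, so $u$ is empty and it suffices to join $z_1,z_2\in C_0$ by a positive-probability sequence of reactions drawn from $\mathcal K_r(C_0,\emptyset)$; none of these consumes a free species, and since $(C_0,\emptyset)$ is a leaf (so $\mathbb B(C_0,\emptyset)=\emptyset$) none produces one either, hence every intermediate state keeps all free coordinates zero. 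This simultaneously settles the ``$z_1\in C$'' clause because here $C=C_0$.

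For the inductive step I would set $B=\mathbb B(C_{l-1},G_{l-1})$, so that $G_l=G_{l-1}\sqcup B$, and fix $z_1\in C_0$, $z_2\in C=C_l$, $r_0$. The argument runs in three stages. \emph{Stockpiling:} apply the induction hypothesis to the truncated path ending at $(C_{l-1},G_{l-1})$ to drive $(z_1,\mathbf 0)$ to some $(z',y',\mathbf 0)$ with $z'\in C_{l-1}$ and $y'\in\N_0^{|G_{l-1}|}$ as large as desired. \emph{Bulk birth:} for each $j\in B$ pick a reaction $\kappa_j\in\mathcal K_r(C_{l-1},G_{l-1})$ having $j$ in its product support; since $C_{l-1}$ is a closed communication class for $\Theta(G_{l-1})$, I can repeatedly reposition the bounded state inside $C_{l-1}$ and fire the relevant $\kappa_j$, each cycle raising the count of $j$, consuming only (abundantly stocked) $\mathcal D_f^\sigma(G_{l-1})$-fuel, and — by the very definition of $\mathbb B(C_{l-1},G_{l-1})$ — creating free species only inside $G_l$; iterating enough times yields $(z'',u,\mathbf 0)$ with $z''\in C_{l-1}$ and $u$ as large as I like. \emph{Transit:} since $C_l\in\Psi_{(G_{l-1},G_l)}(C_{l-1})$ there is a sequence of $\mathcal K_r(\cdot,G_l)$-reactions carrying $z''$ to $z_2$; when $C_{l-1}\subseteq C_l$ this can be chosen inside $C_l$, so all the reactions lie in $\mathcal K_r(C_l,G_l)$ and, by the leaf identity $\mathbb B(C_l,G_l)=\emptyset$, produce no free species outside $G_l$, leaving the final state in the form $(z_2,x,\mathbf 0)$; choosing $u$ in the previous stage large enough to absorb the net consumption of this one fixed finite sequence gives $x\ge r_0$. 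For the ``$z_1\in C$'' clause I would re-run these three stages with the bounded state started inside $C$, again using that $C=C_l$ is closed and that its generating reactions $\mathcal K_r(C,A)$ never touch free species outside $\mathcal D_f^\sigma(A)$.

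The hard part will be the transit stage in the case $C_{l-1}\not\subseteq C_l$, where the $R(G_l)$-path from $C_{l-1}$ to $C_l$ genuinely passes through an \emph{open} communication class $O$ of $\Theta(G_l)$: one must check that the reactions along this path can be fired in the full dynamics while (a) keeping every needed reactant strictly positive at the moment it is fired and (b) never creating a free species outside $\mathcal D_f^\sigma(A)$. The inputs I would lean on here are the transitivity of $\Psi$ along \eqref{defn_bct}, the monotonicity $\mathcal K_r(G_{l-1})\subseteq\mathcal K_r(G_l)$, and — crucially — the observation that every free species each admissible reaction can produce lands in some $G_i\subseteq A$, so the $v$-coordinates are never populated. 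Tracking exactly this is the only genuinely nontrivial bookkeeping; everything else reduces to the routine ``pump the fuel high enough and then wander freely inside the current closed class'' maneuver, for which Assumption \ref{massacttype} (through the fact that a reaction stays enabled once all its reactants are present) is exactly what is needed.
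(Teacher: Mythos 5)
Your three-stage induction along the BCT path (stockpile by the induction hypothesis, pump the newly born species of $\mathbb{B}(C_{l-1},G_{l-1})$ while the bounded state wanders inside $C_{l-1}$, then transit to $C_l$ via $\Psi_{(G_{l-1},G_l)}$) is in substance the paper's own stagewise argument, and the open-class transit bookkeeping you flag is handled at the same level of detail there; the only formulation wrinkle is that your induction hypothesis must be restated for the interior nodes $(C_{l-1},G_{l-1})$, which are not leaves (one should say: from $(z_1,\mathbf{0})$ one can reach a state with bounded part in $C_{l-1}$, arbitrarily large coordinates on $G_{l-1}$, and zeros elsewhere), but that is easily repaired.

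The genuine gap is in the final clause, $z_1\in C$. Starting from $(z_1,\mathbf{0},\mathbf{0})$ no free species are present, so by Assumption \ref{massacttype} the only reactions that can fire are those in $\mathcal{K}_r(z_1,\emptyset)$; you cannot ``re-run the three stages with the bounded state started inside $C$'', because the reactions that give birth to the first-cascade species are only guaranteed to be enabled at suitable bounded states in $C_0$, and there is no reason that the bounded state, moving only through reactions requiring no free molecules, can be driven back into $C_0$. What one can say is that it eventually settles in some closed class $C_0'\in\mathcal{C}(\emptyset)$, and since $\mathcal{K}_r(\emptyset)\subset\mathcal{K}_r(A)$ and $C$ is closed for $\Theta(A)$, this $C_0'$ is contained in $C$; but the BCT path grown from $(C_0',\emptyset)$ terminates at a leaf $(C',A')$ with $C'\subset C$ and $A'\subset A$ that could a priori be strictly smaller, in which case you could never populate all of the $A$-coordinates from $z_1$. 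This is exactly where the paper invokes the standing hypothesis $(C,A)\in\mathbb{L}_{\textnormal{min}}$: minimality forces $(C',A')=(C,A)$, and then the already-proved first assertion applies with $C_0'$ in place of $C_0$. Your sketch never uses minimality, and the justification you offer (``$C$ is closed and $\mathcal{K}_r(C,A)$ never produces free species outside $\mathcal{D}^{\sigma}_f(A)$'') is beside the point, since those reactions generally consume $A$-species that are not yet present; without the minimality argument the ``moreover'' part of Lemma \ref{lem:level3} is not established.
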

 \begin{proof}
Throughout this proof we denote the relation $\stackrel{ \tilde{\mathcal{N} }^{\sigma}  }{\longrightarrow} $ by $\longrightarrow$. Let $C_i$-s and $G_i$-s be as in the path \eqref{defn_bct} consisting of $(l+1)$ nodes. Let $d_i = |G_i|$ for each $i$ and since $G_i$-s in an increasing family of sets we must have
\begin{align*}
0 = d_0 < d_1 < d_2 <\dots < d_l = |A|.
\end{align*}
By permuting the network if necessary, we can assume that $G_i = \{1,\dots, d_i\}$ for each $i$.

Clearly the case $l=0$ is trivial because in this case $C_0 =  C$ and $A = \emptyset$. We now consider the case $l=1$ and $A = G_1$. Due to reactions in the set $\mathcal{K}_r( \emptyset )$, the dynamics of \emph{bounded} species can reach any state in $C_0$ from any other state in $C_0$. The construction of BCT ensures that for each \emph{free} species $\mathbf{S}_i$ in $\mathcal{D}^{ \sigma }_f(A)$, there exists a state $y \in C_0$ and a reaction $\mathcal{K}_r(y, \emptyset) \subset \mathcal{K}_r( \emptyset )$, which produces this \emph{free} species. By repeated firings of this reaction we can push the molecular-count of species ${\bf S}_i$ beyond any positive integer. Note that reactions in $\mathcal{K}_r(\emptyset)$ do not consume any \emph{free} species. Hence we can perform this procedure independently for all the \emph{free} species in $\mathcal{D}^{ \sigma }_f(A)$, and prove that for any $r \in \N^{d_1}_0$ there exists a vector $x' \in \N^{d_1}_0$ along with some state $y \in C_0$ such that $x' \geq r$ and
\begin{align*}
 (z_1, {\bf 0} )  \longrightarrow (y, x', {\bf 0}).
\end{align*}
However since $C \in \Psi_{ ( \emptyset, A_1 ) }( C_0 )$, after finitely many reactions in $\mathcal{K}_r(A)$ that only consume the \emph{free} species in $\mathcal{D}^{ \sigma }_f(A)$, the state of the \emph{bounded} species can go from $y$ to $z_2 \in C$, thereby ensuring that
\begin{align*}
(y, x', {\bf 0})  \longrightarrow (z_2,x,{\bf 0})
\end{align*}
for some $x \in \N^{d_1}_0$ with $x \leq x'$. By choosing $r$ with large enough entries we can ensure that $x \geq r_0$. The assertion of this lemma (i.e. $ (z_1, {\bf 0} ) \longrightarrow (z_2,x,{\bf 0}) $) then follows for $l=1$ from the transitivity of relation $\longrightarrow$ (see Section \ref{subsec:reachability}).

For a general $l >1$, one can repeat the above arguments at each stage $i= 1,2,\dots,(l-1)$ to prove the lemma's assertion. At each stage we rely on the fact that $C_i$ is a closed communication class under reactions in $\mathcal{K}_r(G_i)$ and these reactions do not consume any \emph{free} species outside the set $\mathcal{D}_f^{\sigma}(G_i)$.

Now suppose $z_1 \in C$. After finitely many reactions in the set $\mathcal{K}_r( \emptyset ) $, the state of \emph{bounded} species will reach a state $z'_1$ in some closed communication class $C'_0 \in \mathcal{C}( \emptyset )$. As $z_1 \in C$ and $\mathcal{K}_r( \emptyset ) \subset \mathcal{K}_r( A )$ we must have $C'_0 \subset C$. There exists a BCT-path of the form
\begin{align*}
(C'_0, \emptyset) = (C'_0,G'_0) \Rightarrow  (C'_1,G'_1) \Rightarrow \dots \Rightarrow  (C'_m,G'_m) = (C',A'),  
\end{align*}  
culminating in the leaf node $(C',A') \in \mathbb{L}$. Note that as $C'_0 \subset C$ we have $\mathcal{K}_r(C'_0, \emptyset ) \subset \mathcal{K}_r(C, A )$ and hence $C'_1 \subset C$ and $G'_1 \subset A$. Repeating this argument $(m-1)$ times we can conclude that $C'_m = C' \subset C$ and $G'_m = A' \subset A$. However as $(C,A)$ is a minimal leaf node we must have $C'=C$ and $A' =  A$. The result now follows from the assertion already proven above.
\end{proof}

For the minimal leaf node $(C,A) \in \mathbb{L}_{ \textnormal{min} }$, let $\tilde{\mathcal{N} }^{\sigma}(C,A)$ be the network formed by restricting the \emph{free} species to the set $\mathcal{D}^{\sigma}_f(A)$, the reactions to the set $\mathcal{K}_r( C, A  )$ and the \emph{bounded} species state-space to the set $C$. Let $\mathbb{L}_d(C,A)$ denote the set of all leaf nodes for the Death-Cascade Tree (DCT) for network $\tilde{\mathcal{N} }^{\sigma}(C,A)$. The following lemma pertains to the situation when the leaf node $(C,A)$ is death-exhaustive. 
\begin{lemma}
\label{lem:deathexhaustive}
Suppose that the minimal leaf node $(C,A) \in \mathbb{L}_{ \textnormal{min} }$ is death-exhaustive, i.e. there exists a node $(C',A') \in \mathbb{L}_d(C,A)$ such that $A =A'$. Then $C' = C$ and for any $z_1,z_2 \in C$ and $r_0 \in \N^{|A|}_0$ there exists a vector $x \in \N^{|A | }_0$ such that $x \geq r_0$ and  
\begin{align*}
( z_1, x, {\bf 0})  \stackrel{ \tilde{\mathcal{N} }^{\sigma}  }{\longrightarrow}   (z_2, {\bf 0} ).  
 \end{align*} 
\end{lemma}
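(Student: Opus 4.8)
The plan is to deduce Lemma~\ref{lem:deathexhaustive} from Lemma~\ref{lem:level3} applied to the \emph{inverse} network, through the reachability correspondence \eqref{reln:netinverse}. Write $\mathcal{M}' = \tilde{\mathcal{N}}^{\sigma}(C,A)$ for the network obtained from $\tilde{\mathcal{N}}^{\sigma}$ by keeping only the \emph{free} species in $\mathcal{D}^{\sigma}_f(A)$, only the reactions in $\mathcal{K}_r(C,A)$, and only the \emph{bounded} state-space $C$, and let $\mathcal{M} = \tilde{\mathcal{N}}^{\sigma}_{\textnormal{inv}}(C,A)$ be its inverse, so that the BCT of $\mathcal{M}$ is by definition the DCT of $\mathcal{M}'$. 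I would first record that $\mathcal{M}'$ is a genuine network satisfying Assumption~\ref{massacttype} with state-space $C \times \N^{|A|}_0$: this uses that $C \in \mathcal{C}(A)$ is a \emph{closed} communication class, so reactions fired from states of $C$ keep the bounded state in $C$, and that $\mathbb{B}(C,A) = \emptyset$ since $(C,A)$ is a leaf, so no reaction of $\mathcal{K}_r(C,A)$ produces a \emph{free} species outside $\mathcal{D}^{\sigma}_f(A)$; the discarded \emph{free} species are therefore inert under $\mathcal{M}'$, and $\mathcal{M}$ satisfies Assumption~\ref{massacttype} automatically, being an inverse network (see Section~\ref{sec:defninverse}).

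The crux is to prove $C' = C$ and, in fact, that $(C,A)$ is the \emph{unique} leaf of the BCT of $\mathcal{M}$, i.e.\ $\mathbb{L}_d(C,A) = \{(C,A)\}$. Here I would imitate the argument immediately after \eqref{reln:netinverse}, but carried out at the level of the \emph{bounded} species alone. When every \emph{free} species of the subnetwork is ``activated'' (the second coordinate of a BCT node equal to the full index set $\{1,\dots,|A|\}$), the communication relation on the \emph{bounded} states for $\mathcal{M}'$ is that of the bounded-only chain with transition vectors $\bar{\zeta}^{\sigma}_k$, available at $y$ when $y \geq \bar{\nu}^{\sigma}_k$; for $\mathcal{M}$ the corresponding chain uses $-\bar{\zeta}^{\sigma}_k$, available when $y \geq \bar{\rho}^{\sigma}_k$, and the identity $\bar{\rho}^{\sigma}_k - \bar{\zeta}^{\sigma}_k = \bar{\nu}^{\sigma}_k$ shows that the latter zero-pattern matrix is the transpose of the former, hence the two have reversed reachability and identical communication classes. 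Since $C \in \mathcal{C}(A)$ restricts to a single closed class on $C$, so does $\mathcal{C}_{\mathcal{M}}(\cdot)$ with all \emph{free} species activated, which forces the DCT leaf whose second coordinate is $A$ to have first coordinate $C$, giving $C' = C$; a path argument as in the last paragraph of the proof of Lemma~\ref{lem:level3} (fire the no-\emph{free}-reactant reactions of $\mathcal{M}$ to land in a generation-$0$ class, follow the resulting BCT path, and use monotonicity of $\mathcal{K}_r$ along a path together with the closedness of $C$) then rules out any other leaf, so $(C,A)$ is a minimal leaf of the BCT of $\mathcal{M}$.

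Finally, with $(C,A)$ a minimal leaf of the BCT of $\mathcal{M}$ reached by a directed path $(\hat{C}_0,\emptyset) \Rightarrow \dots \Rightarrow (C,A)$, I would apply Lemma~\ref{lem:level3} to $\mathcal{M}$: for arbitrary $z_1,z_2 \in C$ (using the ``moreover'' clause, legitimate precisely because $(C,A)$ is a minimal leaf) and arbitrary $r_0 \in \N^{|A|}_0$ there is $x \in \N^{|A|}_0$ with $x \geq r_0$ and $(z_2,\mathbf{0}) \stackrel{\mathcal{M}}{\longrightarrow} (z_1,x)$. Then \eqref{reln:netinverse} applied to $\mathcal{M}'$ yields $(z_1,x) \stackrel{\mathcal{M}'}{\longrightarrow} (z_2,\mathbf{0})$, and since any positive-probability reaction sequence for $\mathcal{M}'$ remains positive-probability for $\tilde{\mathcal{N}}^{\sigma}$ --- the bounded trajectory stays inside $C \subseteq \mathcal{E}^{\sigma}_b$ and the $d_f - |A|$ discarded \emph{free} species stay at $\mathbf{0}$ because $\mathbb{B}(C,A) = \emptyset$ --- we conclude $(z_1,x,\mathbf{0}) \stackrel{\tilde{\mathcal{N}}^{\sigma}}{\longrightarrow} (z_2,\mathbf{0})$, which is the assertion.

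The step I expect to be the main obstacle is the middle one: establishing $C' = C$ together with the uniqueness of the DCT leaf. The difficulty is to move cleanly between the finite-Markov-chain bookkeeping for the \emph{bounded} species (the matrices $Z$, $\Omega$, $\bar{\Theta}$ and the class families $\mathcal{C}(\cdot)$) and the reachability reversal \eqref{reln:netinverse}, once the \emph{bounded} state-space has been truncated to $C$ and the reaction set to $\mathcal{K}_r(C,A)$; one must check that this truncation neither splits $C$ into several communication classes under the reduced reaction set nor creates additional leaves in the Death-Cascade Tree.
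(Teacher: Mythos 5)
Your overall route is exactly the paper's: form the restricted network $\tilde{\mathcal{N}}^{\sigma}(C,A)$ and its inverse, observe that the DCT is the BCT of the inverse, apply Lemma \ref{lem:level3} along a DCT path ending at $(C,A)$, and transfer back through \eqref{reln:netinverse} and the fact that the subnetwork is a restriction of $\tilde{\mathcal{N}}^{\sigma}$. The genuine gap is precisely the middle step you flag. From death-exhaustiveness one does get $C'=C$ (your transpose-of-the-zero-pattern argument, or the paper's one-line remark that communication is symmetric), but the further claim that $(C,A)$ is the \emph{unique} --- hence minimal --- leaf of the DCT, which is what licenses the ``moreover'' clause of Lemma \ref{lem:level3} for the inverse network (i.e.\ starting from an arbitrary $z_2\in C$ rather than from a generation-$0$ class of the DCT), does not follow from the stated hypotheses, and the sketched path argument cannot close it.

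Here is a concrete obstruction. Take a bounded pair $G,G'$ with $x_G+x_{G'}=1$, free species $F_1,F_2$, and mass-action reactions $G\to G'+F_1$, $\ G'\to G+F_2$, $\ G'+F_1\to G'$, $\ G'+F_2\to G'$. The forward BCT has the single minimal leaf $(C,A)$ with $C=\{(1,0),(0,1)\}$ and $A=\{F_1,F_2\}$, and $\mathcal{K}_r(C,A)$ contains all four reactions. In the inverse subnetwork the only reactions with no free reactants are $G'\to G'+F_1$ and $G'\to G'+F_2$, which do not move the bounded state, so the DCT has two generation-$0$ classes: the branch rooted at the $G'$-state activates $F_1,F_2$ and terminates at the leaf $(C,A)$, so $(C,A)$ is death-exhaustive, while the branch rooted at the $G$-state is immediately a leaf $(\{(1,0)\},\emptyset)$; thus $(C,A)$ is not a minimal leaf of the DCT and no argument can make it one. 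Moreover the lemma's conclusion itself fails here: the only way to enter the bounded state with $G$ present is via $G'\to G+F_2$, and no reaction consumes $F_2$ while $G$ is present, so no state $(z_1,x)$ with $x\geq(1,1)$ can reach the state with $G$ present and zero free molecules. Note that the paper's own proof has the same weak spot (it invokes Lemma \ref{lem:level3} from an arbitrary $z_2\in C$, and the uniqueness assertion in Section \ref{sec:freebddspecies} is exactly what is missing), so you have correctly identified the crux; but repairing it requires strengthening the hypothesis (e.g.\ demanding that every leaf of $\mathbb{L}_d(C,A)$ have second coordinate $A$) or bringing in the singular-degradability assumptions of Theorem \ref{maintheorem}, not the path argument you propose.
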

\begin{proof}
Let $\tilde{\mathcal{N} }^{\sigma}_{ \textnormal{inv} }(C,A)$ be the inverse of network $\tilde{\mathcal{N} }^{\sigma}(C,A)$. Note that since $(C,A)$ is a leaf node for BCT, relation \eqref{leafnodecond} holds and hence the reactions in $\mathcal{K}_r( C, A  )$ do not produce any \emph{free} species outside the set $\mathcal{D}_f^{\sigma}(A)$. This implies that in the inverse network $\tilde{\mathcal{N} }^{\sigma}_{ \textnormal{inv} }(C,A)$ no reaction in $\mathcal{K}_r( C, A  )$ can consume any \emph{free} species outside the set $\mathcal{D}_f^{\sigma}(A)$. Hence all the reactions in $\mathcal{K}_r( C, A  )$ can fire in this inverse network as it satisfies Assumption \ref{massacttype}. Since communication is a symmetric relation and $C' \subset C$ is a closed communication class, $A' = A$ implies that $C'= C$. By definition, the DCT for network $\tilde{\mathcal{N} }^{\sigma}(C,A)$ is the BCT of network $\tilde{\mathcal{N} }^{\sigma}_{ \textnormal{inv} }(C,A)$. Consider the directed path \eqref{defn_bct} in this BCT, starting with the node $(C_0, \emptyset)$ and terminating at the leaf node $(C,A)$. Pick any $z_1,z_2 \in C$ and $r_0 \in \N^{|A|}_0$. Applying Lemma \ref{lem:level3} we can conclude that there exists a vector $x \in \N^{|A | }_0$ such that $x \geq r_0$ and  
\begin{align*}
  (z_2, {\bf 0} )   \stackrel{ \tilde{\mathcal{N} }^{\sigma}_{ \textnormal{inv} }(C,A) }{\longrightarrow}  ( z_1, x, {\bf 0}),  
 \end{align*}  
which also implies
\begin{align*}
( z_1, x, {\bf 0})  \stackrel{ \tilde{\mathcal{N} }^{\sigma}  }{\longrightarrow}   (z_2, {\bf 0} ) ,  
 \end{align*}
 due to relation \eqref{reln:netinverse} and the fact that network $\tilde{\mathcal{N} }^{\sigma}(C,A)$ is simply a restriction of the network $ \tilde{\mathcal{N} }^{\sigma} $. This completes the proof of this lemma.  
\end{proof}

\begin{lemma}
\label{lem:level3d}
Let $(C,A) \in \mathbb{L}_{\textnormal{min}}$ be a minimal leaf node such that all the free species in $\mathcal{D}^{\sigma}_f(A)$ are singularly-degradable w.r.t. $A$. Then there exists a $r_0 \in \N^{|A| }_0$ such that for any $z \in C$ and $x_1,x_2 \in \N^{|A|}_0$ satisfying $x_1 \geq x_2 \geq r_0$ we have
\begin{align*}
( z, x_1, {\bf 0}) \stackrel{ \tilde{\mathcal{N} }^{\sigma}  }{\longrightarrow}  (z, x_2, {\bf 0} ).
 \end{align*} 
 \end{lemma}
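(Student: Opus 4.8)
The plan is to exhibit an explicit positive‑probability reaction sequence that starts at $(z,x_1,\mathbf{0})$, removes exactly $(x_1)_i-(x_2)_i$ molecules of the free species $\sigma(d_b+i)\in\mathcal{D}^{\sigma}_f(A)$ for each $i$, and does so without ever disturbing the bounded species (which stay at $z$) or the free species outside $\mathcal{D}^{\sigma}_f(A)$ (which stay at $\mathbf{0}$). First I would unpack singular‑degradability: working under the permutation $\sigma=\sigma_3(A)$, so that $\mathcal{D}^{\sigma}_f(A)$ occupies the free coordinates $A=\{1,\dots,|A|\}$, condition \eqref{newcondition0} for the matrix $\hat{S}_{\sigma}[\tilde{\mathcal{K}}_p(A)]$ supplies, for each $i\in\{1,\dots,|A|\}$, nonnegative integers $(a^{(i)}_k)_{k\in\tilde{\mathcal{K}}_p(A)}$ with $\sum_k a^{(i)}_k\hat{\zeta}_k=-e_i$, where $\hat{\zeta}_k=\tilde{\rho}^{\sigma}_k-\tilde{\nu}^{\sigma}_k$ is the $k$‑th column of $\hat{S}_{\sigma}$. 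Two facts about the reactions $k$ with $a^{(i)}_k>0$ are the crux: (i) by the definition \eqref{defn_noboundedfreecons} of $\tilde{\mathcal{K}}_p(A)$ such a reaction involves no bounded species and produces no free species outside $\mathcal{D}^{\sigma}_f(A)$; (ii) reading off coordinate $j\notin A$ of the identity $\sum_k a^{(i)}_k\hat{\zeta}_k=-e_i$, every summand $a^{(i)}_k(\hat{\zeta}_k)_j=-a^{(i)}_k(\tilde{\nu}^{\sigma}_k)_j$ is $\le 0$ and they add to $0$, forcing $(\tilde{\nu}^{\sigma}_k)_j=0$ whenever $a^{(i)}_k>0$, so such a reaction consumes no free species outside $\mathcal{D}^{\sigma}_f(A)$ either. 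Combining (i) and (ii) with Assumption \ref{massacttype}, at a state $(z,w,\mathbf{0})$ the propensity $\tilde{\lambda}^{\sigma}_k$ is positive exactly when $w$ dominates the restriction of $\tilde{\nu}^{\sigma}_k$ to the coordinates of $A$, and firing $k$ then changes only the middle block, by $\hat{\zeta}_k$.

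Next I would convert each such $\N_0$‑combination into an ordered firing schedule with a uniform ``reservoir'' requirement. For each $i$, list the reactions of the combination with multiplicity as $k^{(i)}_1,\dots,k^{(i)}_{n_i}$, form the partial sums $P^{(i)}_0=\mathbf{0}$, $P^{(i)}_\ell=P^{(i)}_{\ell-1}+\hat{\zeta}_{k^{(i)}_\ell}$ (so $P^{(i)}_{n_i}=-e_i$), and put $(b_i)_j=\max_{1\le\ell\le n_i}\max\bigl\{0,\ (\tilde{\nu}^{\sigma}_{k^{(i)}_\ell})_j-(P^{(i)}_{\ell-1})_j,\ -(P^{(i)}_\ell)_j\bigr\}$. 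Whenever $w\ge b_i$ one can fire $k^{(i)}_1,\dots,k^{(i)}_{n_i}$ in order from $(z,w,\mathbf{0})$: the reactant requirement of $k^{(i)}_\ell$ is met at the intermediate state $(z,w+P^{(i)}_{\ell-1},\mathbf{0})$, each $w+P^{(i)}_\ell$ stays nonnegative, and the run ends at $(z,w-e_i,\mathbf{0})$. Since $-(P^{(i)}_{n_i})_i=1$ we get $(b_i)_i\ge 1$, so $w\ge b_i$ already guarantees $w-e_i\in\N^{|A|}_0$. I then set $r_0:=\max_{1\le i\le |A|}b_i$ componentwise, which is the $r_0$ claimed by the lemma.

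Finally the lemma follows by a short induction on $\|x_1-x_2\|_1$ using transitivity of $\stackrel{\tilde{\mathcal{N}}^{\sigma}}{\longrightarrow}$ (Section \ref{subsec:reachability}): if $x_1\ne x_2$ pick $i$ with $(x_1)_i>(x_2)_i$; since $x_1\ge r_0\ge b_i$ the schedule above gives $(z,x_1,\mathbf{0})\stackrel{\tilde{\mathcal{N}}^{\sigma}}{\longrightarrow}(z,x_1-e_i,\mathbf{0})$, and $x_1-e_i\ge x_2\ge r_0$, so the inductive hypothesis applies to the pair $(x_1-e_i,x_2)$. I expect the only place needing care to be the bookkeeping in the middle paragraph — verifying that the buffer $b_i$ genuinely absorbs every intermediate dip of the chosen order, and that via (i) and (ii) the entire run leaves the bounded block and the outside‑$A$ free block untouched so that every intermediate state really lies in $C\times\N^{|A|}_0\times\{\mathbf{0}\}$; the rest is routine.
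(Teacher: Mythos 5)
Your proposal is correct and follows essentially the same route as the paper's proof: extract from singular-degradability a nonnegative-integer combination of reactions in $\tilde{\mathcal{K}}_p(A)$ summing to $-e_i$, observe (via the same support/sign argument on coordinates outside $A$) that these reactions touch neither the bounded block nor the free species outside $\mathcal{D}^{\sigma}_f(A)$, order them with a sufficiently large buffer $r_i$ (your explicit $b_i$) so every intermediate firing is enabled under Assumption \ref{massacttype}, set $r_0$ as the componentwise maximum, and chain single-molecule reductions by transitivity. Your explicit formula for the buffer and the induction on $\|x_1-x_2\|_1$ are only cosmetic variations on the paper's use of Proposition \ref{prop_reach_reln_add} and the decomposition $\alpha=\sum_i\alpha_i e_i$.
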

\begin{proof}
Throughout this proof we denote the relation $\stackrel{ \tilde{\mathcal{N} }^{\sigma}  }{\longrightarrow} $ by $\longrightarrow$. Without loss of generality we can assume that $A = \{1,2,\dots,|A|\}$. The proof of this lemma is inspired by the proof of Theorem 3.4 in \cite{Craciun}. Fixing a $z \in C$, we first show that for each $i \in A$ there exists a $r_i \in \N^{ |A| }_0$ such that 
\begin{align}
\label{key_lem:level3_1}
(z,r_i, {\bf 0} ) \longrightarrow (z,r_i -e_i,{\bf 0} ),
\end{align}
where $e_i$ is the $i$-th standard basis vector in $\R^{|A|}$.

Pick any $i \in A$ and let $\tilde{\mathcal{K}}_p(A) $ be the set of reactions given by \eqref{defn_noboundedfreecons}. Since each species in $\mathcal{D}^{\sigma}_f(A)$ is singularly-degradable, there exists a sequence of reactions $k_1,\dots,k_n \in \tilde{\mathcal{K}}_p(A)$ such that 
\begin{align}
\label{stoichiometrixreln1}
(-e_i, {\bf 0} )  = \sum_{j=1}^n ( \tilde{\rho}^{ \sigma }_{k_j} -  \tilde{\nu}^{ \sigma }_{k_j}   ),
\end{align}
where $ \tilde{\nu}^{ \sigma }_{k}$ and $ \tilde{\rho}^{ \sigma }_{k}$ are as in Section \ref{sec:freebddspecies}. Note that since for each reaction $k_j$ we have $\textnormal{supp}( \tilde{\rho}^{ \sigma }_{k_j} ) \subset A$, we must also have $\textnormal{supp}( \tilde{\nu}^{ \sigma }_{k_j} ) \subset A$, or otherwise the last $(d_f -|A|)$ components in the r.h.s. of \eqref{stoichiometrixreln1} cannot be ${\bf 0}$. Therefore each of these reactions can only consume the \emph{free} species in $\mathcal{D}^{\sigma}_f(A)$. For each $m=1,\dots, n$ let   
$$y_m = \sum_{j=1}^{m-1} ( \tilde{\rho}^{ \sigma }_{k_j} -  \tilde{\nu}^{ \sigma }_{k_j}   ).$$
By choosing a $r_i \in \N_0^{|A|}$ with large enough entries we can ensure that $(r_i,{\bf 0}) + y_m \geq \tilde{\nu}^{ \sigma }_{k_m}$ for each $m$. Since none of these reactions involve the \emph{bounded} species and Assumption \ref{massacttype} is satisfied, such a choice of $r_i$ also ensures that each reaction $k_m$ has a positive probability of firing when the state of the \emph{free} species is $(r_i,{\bf 0}) + y_m $, thereby implying that \eqref{key_lem:level3_1} is satisfied.

We find such a $r_i$ for each $i \in A$ and compute the maximum $r_0:= \max_{i \in A} \{r_i\}$ of these vectors in the componentwise sense. Using Proposition \ref{prop_reach_reln_add} we can conclude that for any $x \in \N^{|A|}_0$ satisfying $x \geq r_0$ we have  
\begin{align}
\label{key_lem:level3_2}
(z,x,{\bf 0}) \longrightarrow (z,x -e_i ,{\bf 0} )
\end{align}
for each $i \in A$. Now select any $x_1,x_2 \in \N_0^{|A| }$ satisfying $x_1 \geq x_2 \geq r_0$ and let $\alpha = (x_1 -x_2 ) \in \N^{|A|}_0$. We can express $\alpha$ as the sum 
$$\alpha = \sum_{i=1}^{ |A| } \alpha_i e_i.$$
Exploiting the transitivity of relation $\longrightarrow$ and using \eqref{key_lem:level3_2}, $\alpha_i$ times for each $i$ we obtain the accessibility chain $ (z,x_2+\alpha, {\bf 0}) \longrightarrow (z,x_2 + \alpha- \alpha_1e_1, {\bf 0}) \longrightarrow  (z,x_2 + \alpha- \alpha_1e_1 - \alpha_2 e_2, {\bf 0}) \longrightarrow  \dots \longrightarrow (z,x_2 + \alpha- \sum_{i=1}^{|A| }\alpha_ie_i , {\bf 0}).$ But $x_2+\alpha =x_1$ and $(x_2 + \alpha- \sum_{i=1}^{|A|  }\alpha_i e_i) =x_2$ and hence the proof of this lemma is complete.
\end{proof}

Next we present a simple modification of the above lemma in the case where all the \emph{free} species are singularly-degradable w.r.t. $A$.
\begin{lemma}
\label{lem:level3d1}
Let $(C,A) \in \mathbb{L}_{ \textnormal{min} }$ be a minimal leaf node such that all the free species in $\mathcal{D}_f$ are singularly-degradable w.r.t. $A$. Then there exists a $r_0 \in \N^{|A| }_0$ such that for any $z \in C$, $ y \in \N^{d_f - |A|}_0$ and any $x  \in \N^{|A|}_0$ satisfying $x \geq r_0$ we have
\begin{align*}
( z, x, y ) \stackrel{ \tilde{\mathcal{N} }^{\sigma}  }{\longrightarrow}  (z, x, {\bf 0} ).
 \end{align*} 
 \end{lemma}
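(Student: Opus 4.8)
The plan is to mirror the argument for Lemma~\ref{lem:level3d}, except that instead of only lowering the copy-numbers of the free species in $\mathcal{D}^{\sigma}_f(A)$ we flush the copy-numbers of the remaining free species, those in $\mathcal{D}_f \cap (\mathcal{D}^{\sigma}_f(A))^c$, all the way down to $\mathbf{0}$, one species and one molecule at a time, while leaving the state $z$ of the bounded species and the count $x$ of the species in $\mathcal{D}^{\sigma}_f(A)$ untouched. As in the earlier lemmas I would write $\longrightarrow$ for $\stackrel{\tilde{\mathcal{N}}^{\sigma}}{\longrightarrow}$ and, permuting if necessary, assume $A=\{1,\dots,|A|\}$, so that the free-species block splits as $(x,y)$ with $x\in\N^{|A|}_0$ and $y\in\N^{d_f-|A|}_0$; note no bounded species will be involved anywhere in the argument, so $z\in C$ stays fixed throughout.

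Fix $z\in C$ and pick any coordinate $i\in\{|A|+1,\dots,d_f\}$. Since the free species $\sigma(d_b+i)$ is singularly-degradable w.r.t.\ $A$, there is a sequence of reactions $k_1,\dots,k_n\in\tilde{\mathcal{K}}_p(A)$ whose free-species stoichiometry vectors sum to $-e_i$. The structural observation I expect to be the crux of the write-up is this: every reaction in $\tilde{\mathcal{K}}_p(A)$ produces only free species with addresses in $A$ (cf.\ \eqref{defn_noboundedfreecons}), so no reaction in the sequence produces species $\sigma(d_b+i)$; hence the $i$-th coordinate of the running partial sums is non-increasing and descends from $0$ to $-1$, which forces exactly one reaction in the sequence to consume exactly one molecule of species $\sigma(d_b+i)$, and forces no reaction to consume any free species with address in $\{|A|+1,\dots,d_f\}\setminus\{i\}$ (otherwise the net change in that coordinate would be strictly negative, contradicting the target $-e_i$). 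Moreover the net effect of the sequence on the first $|A|$ free coordinates is $\mathbf{0}$. Consequently, choosing $r_i\in\N^{|A|}_0$ with entries large enough to absorb the finitely many negative excursions of the partial sums in the first $|A|$ coordinates and to meet the reactant demands there, the sequence can be fired from any state $(z,v,w)$ with $v\geq r_i$ and $w_i\geq 1$ — the only requirement outside $A$ being $w_i\geq 1$, which is automatic precisely when we intend to decrement $w_i$ — ending at $(z,v,w-e_i)$ with $v$ restored to its original value. By Assumption~\ref{massacttype} each intermediate reaction then has positive propensity, and Proposition~\ref{prop_reach_reln_add} gives the statement for every $v\geq r_i$.

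Finally I would set $r_0=\max_{|A|<i\le d_f} r_i$ (with $r_0=\mathbf{0}$ and nothing to prove when $d_f=|A|$, so that $y=\mathbf{0}$). Given $z\in C$, $x\geq r_0$ and $y\in\N^{d_f-|A|}_0$, process the coordinates $i=|A|+1,\dots,d_f$ in turn: applying the previous step $y_i$ times drives the $i$-th coordinate of the free block to $0$, and during each run the first $|A|$ coordinates only dip and return to $x\geq r_0\geq r_i$, while the decrement of coordinate $i$ disturbs neither $z$, nor $x$, nor any coordinate already set to $0$ (since the witness sequence for $-e_i$ consumes only species $\sigma(d_b+i)$ among the outside-$A$ species). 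Chaining these reachability steps by transitivity of $\longrightarrow$ yields $(z,x,y)\longrightarrow(z,x,\mathbf{0})$, as required. The main obstacle is really just the bookkeeping in the structural observation of the middle paragraph — in particular noting that the order in which the reactions of the witness sequence are fired is irrelevant for the $i$-th coordinate; once that is in place, the assembly is routine and parallels Lemma~\ref{lem:level3d}.
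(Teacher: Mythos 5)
Your proposal is correct and follows essentially the same route as the paper: for each free species outside $\mathcal{D}^{\sigma}_f(A)$ you use the singular-degradability witness sequence in $\tilde{\mathcal{K}}_p(A)$ (whose products lie in $A$, forcing exactly one unit-consumption of that species and none of the other outside-$A$ species), choose $r_i$ large enough in the $A$-coordinates to make every intermediate state feasible under Assumption \ref{massacttype}, and then take $r_0=\max_i r_i$ and chain the single-molecule decrements via Proposition \ref{prop_reach_reln_add} and transitivity, exactly as in the paper's proof. Your explicit bookkeeping of the partial sums is just a slightly more detailed version of the paper's observation about the reaction $k_{n_0}$.
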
      
 \begin{proof}
 The proof is similar to the proof of Lemma \ref{lem:level3d}. As before we denote the relation $\stackrel{ \tilde{\mathcal{N} }^{\sigma}  }{\longrightarrow} $ by $\longrightarrow$ and assume that $A = \{1,2,\dots,|A|\}$. Fix a $z \in C$. We first show that for each $i \in \{ |A| + 1,\dots, d_f \}$ there exists a $r_i \in \N^{ |A| }_0$ such that 
\begin{align}
\label{key_lem:level3_11}
(z,r_i, e_i) \longrightarrow (z,r_i,{\bf 0} ),
\end{align}
where $e_i$ is the $i$-th standard basis vector in $\R^{d_f - |A|}$.

Fix any $i \in  \{ |A| + 1,\dots, d_f \}$ and since all the \emph{free} species in $\mathcal{D}_f$ are singularly-degradable w.r.t. $A$, there exists a sequence of reactions $k_1,\dots,k_n \in \tilde{\mathcal{K}}_p(A)$ such that 
\begin{align*}
({\bf 0} ,-e_i )  = \sum_{j=1}^n ( \tilde{\rho}^{ \sigma }_{k_j} -  \tilde{\nu}^{ \sigma }_{k_j}   ).
\end{align*}
Note that for each reaction $k_j$ we have $\textnormal{supp}( \tilde{\rho}^{ \sigma }_{k_j} ) \subset A$. Hence there must exist a $n_0 \in \{1,\dots,n\}$ such that $\textnormal{supp}( \tilde{\nu}^{ \sigma }_{k_j} ) \subset A$ for all $j \neq n_0$ and for reaction $k_{n_0}$ the following must be satisfied:
\begin{align*}
\textnormal{supp}( \tilde{\nu}^{ \sigma }_{k_{n_0}} ) \subset A \cup \{i\} \quad \textnormal{and} \quad  \tilde{\nu}^{ \sigma }_{i k_{n_0}} = 1.
\end{align*}
For each $m=1,\dots, n$ let   
$$y_m = \sum_{j=1}^{m-1} ( \tilde{\rho}^{ \sigma }_{k_j} -  \tilde{\nu}^{ \sigma }_{k_j}   ).$$
By choosing a $r_i \in \N_0^{|A|}$ with large enough entries we can ensure that $(r_i,e_i) + y_m \geq \tilde{\nu}^{ \sigma }_{k_m}$ for each $m$. Since none of these reactions involve the \emph{bounded} species and Assumption \ref{massacttype} is satisfied, such a choice of $r_i$ also ensures that each reaction $k_m$ has a positive probability of firing when the state of the \emph{free} species is $(r_i,e_i) + y_m$, thereby implying that \eqref{key_lem:level3_11} is satisfied.

We find such a $r_i$ for each $i \in  \{ |A| + 1,\dots, d_f \}$ and compute the maximum $r_0:= \max\{r_i\}$ in the componentwise sense. Using Proposition \ref{prop_reach_reln_add} we can conclude that for any $x \in \N^{|A|}_0$ satisfying $x \geq r_0$ and any $ y = (y_1,\dots, y_{d_f- |A|}) \in \N^{d_f - |A|}_0$ we have  
\begin{align}
\label{key_lem:level3_21}
(z,x,y) \longrightarrow (z,x , y -e_i )
\end{align}
as long as $y_i \geq 1$. Exploiting the transitivity of relation $\longrightarrow$ and using \eqref{key_lem:level3_21}, $y_i$ times for each $i$ we obtain the accessibility chain $ (z,x,y) \longrightarrow (z,x, y - y_1e_1) \longrightarrow  (z,x, y - y_1e_1 - y_2 e_2) \longrightarrow  \dots \longrightarrow (z,x, y - y_1e_1 - y_2 e_2 - \dots - y_{d_f - |A|} e_{d_f - |A|} )$. But $y - y_1e_1 - y_2 e_2 - \dots - y_{d_f - |A|} e_{d_f - |A|} = {\bf 0}$ and hence the proof of this lemma is complete.
\end{proof}                                

 We are now ready to prove Theorem \ref{maintheorem}. We shall use the fact that irreducible state-spaces for network $\tilde{\mathcal{N}}^{ \sigma }$ must be necessarily disjoint (see Section \ref{subsec:reachability}).
  
 \begin{proof}[Proof of Theorem \ref{maintheorem}]
Throughout the proof we denote the relation $\stackrel{ \tilde{\mathcal{N} }^{\sigma}  }{\longrightarrow} $ by $\longrightarrow$. We pick any minimal leaf node $(C,A) \in \mathbb{L}_{ \textnormal{min} }$ and we suppose that this node is death-exhaustive and all the free species in $\mathcal{D}^{ \sigma }_f(A)$ are singularly-degradable w.r.t. $A$. We now show that $C \times \N_0^{ |A| } \times \{ \mathbf{0} \}$ is an irreducible state-space for network $\tilde{\mathcal{N} }^{\sigma}$ permuted according to the permutation $\sigma = \sigma_3(A)$ defined in Section \ref{sec:freebddspecies}. To prove this assertion it suffices to show that for any $z_1,z_2 \in C$ and $x \in \N^{|A|}_0$ we have
 \begin{align}
\label{meainthmrel1}
 (z_1, {\bf 0}) \longrightarrow ( z_2, x, { \bf 0 })   \longrightarrow (z_1, {\bf 0}).
\end{align}
This is because if \eqref{meainthmrel1} holds then for any $(z_1,x_1,{ \bf 0 }),(z_2,x_2,{ \bf 0 }) \in C \times \N_0^{ |A| } \times \{ \mathbf{0} \}$ we have $(z_1,x_1,{ \bf 0 })  \longrightarrow  (z_2,x_2,{ \bf 0 })$ due to the following chain of accessibility relations $(z_1,x_1,{ \bf 0 })  \longrightarrow   (z_1, { \bf 0 })  \longrightarrow  ( z_2, x_2,{ \bf 0 }) $ and the transitivity of $\longrightarrow $.

We now prove \eqref{meainthmrel1} for a fixed $z_1,z_2 \in C$ and $x \in \N^{|A|}_0$. Since the leaf node $(C,A)$ is death-exhaustive, Lemma \ref{lem:deathexhaustive} implies that for any $r_0 \in \N^{|A|}_0$ there exists a vector $x' \in \N^{|A|}_0$ satisfying $x' \geq r_0$ and 
\begin{align}
\label{proofmainthema1}
( z_2, x',{\bf 0}) \longrightarrow (z_2, {\bf 0}).
\end{align}
We assume that $r_0$ is as in Lemma \ref{lem:level3d}. Due to Proposition \ref{prop_reach_reln_add}, relation \eqref{proofmainthema1} implies $(z_2,x+x',{\bf 0}) \longrightarrow ( z_2,  x, {\bf 0} )$. Using Lemma \ref{lem:level3} we can find a vector $y \in \N^{|A|}_0$ satisfying $y \geq (x+x')$ and $ (z_1,  {\bf 0}) \longrightarrow ( z_2, y, {\bf 0})$. Since $(x+x') \geq r_0$, Lemma \ref{lem:level3d} implies that $( z_2, y,{\bf 0}) \longrightarrow (z_2,x+x',{\bf 0})$. We now have an accessibility chain $(z_1, ,{\bf 0}) \longrightarrow ( z_2, y,{\bf 0}) \longrightarrow (z_2,x+x',{\bf 0})  \longrightarrow ( z_2,  x,{\bf 0}),$
and since $\longrightarrow $ is transitive we have the first relation in \eqref{meainthmrel1}.

We now show the converse (i.e. $( z_2, x, { \bf 0 })   \longrightarrow (z_1, {\bf 0})$). Using Lemma \ref{lem:level3} and \ref{lem:deathexhaustive}, we can find vectors $x_1,x_2 \in \N^{|A|}_0$ such that $x_1 \geq x_2  \geq {\bf 0}$, $ (z_2, x_2,{\bf 0}) \longrightarrow ( z_1, {\bf 0}  ) $ and $( z_2,{\bf 0} )  \longrightarrow ( z_2, x_1,{\bf 0})$. The last relation also implies $( z_2, x,{\bf 0} )  \longrightarrow ( z_2, x_1+x,{\bf 0})$ due to Proposition \ref{prop_reach_reln_add}. Since $(x_1+x) \geq x_2 \geq r_0$ we have $  ( z_2, x_1+x,{\bf 0}) \longrightarrow ( z_2, x_2 ,{\bf 0}) $ due to Lemma \ref{lem:level3d}. This gives us the following chain of accessibility relations $( z_2, x,{\bf 0}) \longrightarrow ( z_2, x_1+x,{\bf 0}) \longrightarrow ( z_2, x_2 ,{\bf 0}) \longrightarrow ( z_1,{\bf 0} )$ which shows the second relation of \eqref{meainthmrel1} and proves that $C \times \N_0^{ |A| } \times \{ \mathbf{0} \}$ is an irreducible state-space for network $\tilde{\mathcal{N} }^{\sigma}$.

Assuming that all the free species in the set $\mathcal{D}_f$ are singularly-degradable w.r.t. $A$, we now prove that this is the network's only irreducible state-space that can contain elements in the set $C \times \N^{d_f}_0$ under the permutation $\sigma = \sigma_3(A)$. We prove this claim by contradiction by showing that if $\mathcal{E} \subset \mathcal{E}^{ \sigma }_b  \times \N^{d_f}_0$ is any irreducible state-space for network $\tilde{ \mathcal{N} }^{\sigma}$ satisfying $\mathcal{E} \cap ( C \times \N^{d_f}_0) \neq \emptyset $ then it cannot be disjoint from $C \times \N_0^{ |A| } \times \{ \mathbf{0} \}$. Let $(z,x)$ be any state in $\mathcal{E}$, and as this set is irreducible, any state that can be accessed by $(z,x)$ with relation $\longrightarrow$, must also be in $\mathcal{E}$. We can write $x \in \N^{d_f}_0$ as $x = (x_1,x_2)$ where $x_1 \in \N^{|A|}_0$ and $x_2 \in \N^{d_f - |A|}_0$. Let $r_0 \in \N^{|A|}_0$ be as in Lemma \ref{lem:level3d1}. Using Lemma \ref{lem:level3} and Proposition \ref{prop_reach_reln_add} if necessary, we can assume that $(z,x_1,x_2) \longrightarrow (z,x'_1,x_2)$ for some $x'_1 \geq r_0$. Lemma \ref{lem:level3d1} then implies that $(z,x'_1,x_2) \longrightarrow (z,x'_1,{\bf 0})$, which also means that $(z,x'_1,{\bf 0}) \in \mathcal{E}$. But $(z,x'_1,{\bf 0})  \in C \times \N_0^{ |A| } \times \{ \mathbf{0} \}$ and hence $\mathcal{E}$ cannot be disjoint from $C \times \N_0^{ |A| } \times \{ \mathbf{0} \}$, giving a contradiction. This completes the proof of part (A) of Theorem \ref{maintheorem}.

We now prove part (B). Let $\mathcal{E} \subset \mathcal{E}^{ \sigma }_b  \times \N^{d_f}_0$ be a nonempty irreducible state-space for network $\tilde{ \mathcal{N} }^{\sigma}$. Let $(z,x)$ be any state in $\mathcal{E}$. After finitely many reactions in the set $\mathcal{K}_r( \emptyset ) $, the state of \emph{bounded} species will reach a state $z'$ in some closed communication class $C'_0 \in \mathcal{C}( \emptyset )$. Hence there exists a $x' \in \N^{d_f}_0$ such that $(z,x) \longrightarrow (z',x') \in \mathcal{E}$. Moreover there exists a BCT-path of the form
\begin{align*}
(C'_0, \emptyset) = (C'_0,G'_0) \Rightarrow  (C'_1,G'_1) \Rightarrow \dots \Rightarrow  (C'_m,G'_m) = (C',A'),  
\end{align*}  
culminating in the leaf node $(C',A') \in \mathbb{L}$. Since $\mathbb{L} = \mathbb{L}_{ \textnormal{min} }$ the leaf node $(C',A')$ is also minimal. We view the dynamics of network $\tilde{\mathcal{N}}^{\sigma}$ under the permutation $\sigma = \sigma_3(A')$ defined in Section \ref{sec:freebddspecies}. Pick any $z_1 \in C'$. Due to Lemma \ref{lem:level3} and Proposition \ref{prop_reach_reln_add} we know that for any $r_0 \in \N^{|A'|}_0$, there exists a $x_1 \in \N^{|A'|}_0$ and $x_2 \in \N_0^{d_f - |A'|}$ such that $(z',x')  \longrightarrow  (z_1,x_1,x_2) \in \mathcal{E}$. However $(z_1,x_1,x_2)$ is an element in $C' \times \N_0^{ |A'| } \times \{ \mathbf{0} \}$ and hence, as argued previously, we arrive at a contradiction unless $\mathcal{E} = C' \times \N_0^{ |A'| } \times \{ \mathbf{0} \}$. This completes the proof of part (B) of Theorem \ref{maintheorem}.
\end{proof}

\bibliographystyle{unsrt}

 \end{document}